\documentclass{amsart}

\usepackage{soul}
\usepackage{amsmath,accents}
\usepackage{amsfonts}
\usepackage{amssymb}
\usepackage{mathtools}
\usepackage{latexsym}

\usepackage[shortlabels]{enumitem}
\usepackage{cancel}
\usepackage{cases}
\usepackage{empheq}
\usepackage{multicol}
\usepackage{ulem}
\usepackage{makecell}

\usepackage{wrapfig}

\usepackage{graphicx}

\usepackage{mathrsfs}

\usepackage[T1]{fontenc}
\usepackage[utf8]{inputenc}

\usepackage{verbatim}

\usepackage{url}

\theoremstyle{plain}
\newtheorem{theorem}{Theorem}[section]
\newtheorem{lemma}[theorem]{Lemma}
\newtheorem{corollary}[theorem]{Corollary}
\newtheorem{proposition}[theorem]{Proposition}

\theoremstyle{definition}
\newtheorem{definition}[theorem]{Definition}

\theoremstyle{remark}
\newtheorem{remark}[theorem]{Remark}
\newtheorem{example}[theorem]{Example}

\usepackage{etoolbox}
\AtEndEnvironment{remark}{\hfill$\diamond$}
\AtEndEnvironment{example}{\hfill$\diamond$}

\numberwithin{equation}{section}
\numberwithin{figure}{section}

\usepackage{hyperref}
    \hypersetup{
        colorlinks=true,
        urlcolor=blue,
        citecolor=red
    }

\newcommand{\qqquad}{\qquad\qquad}
\newcommand{\qqqquad}{\qqquad\qqquad}

\newcommand{\vect}[1]{\mathbf{#1}}

\newcommand{\bv}{\vect{v}}
\newcommand{\bw}{\vect{w}}

\newcommand{\bx}{\vect{x}}

\newcommand{\by}{\vect{y}}

\newcommand{\bz}{\vect{z}}

\newcommand{\bnu}{\boldsymbol{\nu}}
\newcommand{\bzeta}{\boldsymbol{\zeta}}
\newcommand{\bphi}{\boldsymbol{\phi}}

\newcommand{\bell}{\boldsymbol{\ell}}

\newcommand{\bE}{\vect{E}}
\newcommand{\bg}{\vect{g}}
\newcommand{\bh}{\vect{h}}

\newcommand{\bbf}{\vect{f}}

\newcommand{\qbar}{{\overline{q}}}

\newcommand{\cA}{\mathcal A}

\newcommand{\cN}{\mathcal N}

\newcommand{\vcN}{\boldsymbol{\mathcal N}}

\newcommand{\R}{\mathbb{R}}
\newcommand{\Rn}{\R^n}

\newcommand{\N}{\mathbb{N}}

\DeclareMathOperator{\supp}{supp}
\DeclareMathOperator{\diver}{div}
\DeclareMathOperator{\grad}{D}

\DeclareMathOperator{\dist}{dist}

\DeclareMathOperator{\NN}{\vect{N}}

\renewcommand{\O}{\Omega}

\renewcommand{\d}{\delta}
\renewcommand{\l}{\lambda}

\newcommand{\s}{\sigma}

\newcommand{\e}{\varepsilon}

\newcommand{\bvphi}{\boldsymbol{\vphi}}

\newcommand{\balpha}{\boldsymbol{\alpha}}

\newcommand{\vphi}{\varphi}

\newcommand{\ov}{\overline}

\newcommand{\abs}[1]{
    \left\lvert#1\right\rvert
}

\newcommand{\set}[1]{
    \left\{#1\right\}
}

\newcommand{\norm}[1]{
    \left\|#1\right\|
}

\newcounter{my_counter}
\usepackage{pifont}

\begin{document}

\title[Nonlocal Boundary Conditions]{Nonlocal Boundary Conditions for Truncated-Fractional Differential
Equations Posed on Bounded Domains}

\author[Mikil Foss]{Mikil Foss}
\address[Mikil Foss]{
Department of Mathematics,
University of Nebraska--Lincoln,
Lincoln, NE 68588-0130, USA}
\email[Mikil Foss]{mikil.foss@unl.edu}

\author[Adam Larios]{Adam Larios}
\address[Adam Larios]{
Department of Mathematics,
University of Nebraska--Lincoln,
Lincoln, NE 68588-0130, USA}
\email[Adam Larios]{alarios@unl.edu}

\author[Michael Pieper]{Michael Pieper}
\address[Michael Pieper]{
Department of Mathematics,
University of Nebraska--Lincoln,
Lincoln, NE 68588-0130, USA}
\email[Michael Pieper (corresponding author)]
{michael.pieper@huskers.unl.edu}
\date{\today}

\begin{abstract}
Nonlocal models have found great success in recent years. In particular, models that replace spatial derivatives with strongly singular integral operators involving finite interaction radii are used in a wide range of applications, including image processing, continuum mechanics, and many other areas. However, from the perspective of mathematical analysis, difficulties arise when considering the appropriate notion of ``boundary'' conditions.  (The nonlocal analogue of the boundary is often called a ``collar'' and is typically not a lower-dimensional set.)  For instance, merely enforcing homogeneous Dirichlet-type constraints on the collar fails to guarantee a number of essential properties used in the analysis and numerical simulation of these models, as we show in the present work. In the local setting, working in a Sobolev space of functions that are zero on the boundary allows one to integrate by parts with no boundary term, approximate by test functions, extend by zero to the full space, and apply the Hardy and Poincar\'e inequalities. We prove that the nonlocal analog of each of these fails if one only requires the functions to be zero on the collar, and that one must move to a smaller, more restrictive space to enjoy these properties. In addition, we introduce a new lifting operator, correct an error in a previously published result on a nonlocal Green's theorem, and establish how the properties listed above relate to existing notions of fractional Sobolev spaces.
\end{abstract}

\keywords{Fractional and nonlocal Sobolev spaces,
nonlocal boundary conditions and volume constraints on the collar,
nonlocal gradients, nonlocal Green's identities}

\subjclass[2020]{35S15, 35R11, 46E35, 45E10, 45P05, 35R09}

\maketitle

\section{Introduction}
What are the appropriate boundary conditions for problems involving nonlocal operators? In any modeling application, the answer to this question must be informed by the physical factors that the model aims to capture. In this work, we put aside the modeling concerns and investigate the question mathematically: what are the implications of some of the natural choices of volume constraints to be imposed on nonlocal integro-differential equations?

We will focus on the truncated-fractional operators $\grad^s_\d$ and $\diver^s_\d$, along with the associated function spaces. These operators were introduced in \cite{bellido_non-local_2023} as a truncated version of the Riesz fractional operators studied in \cite{shieh_new_2015, shieh_new_2018, silhavy_fractional_2020} and originally defined in \cite{horvath_composition_1959}. Like the Riesz fractional operators, the truncated fractional differential operators enjoy a number of attractive properties that make them well-suited for modeling and analysis. Unlike the Riesz fractional operators, however, they have only a finite range of interaction and may therefore be applied on bounded domains, with volume constraints enforced only a set of finite measure. This is distinct from works such as \cite{ros-oton_nonlocal_2016}, which study nonlocal problems which enforce constraints on all of $\O^c$.

Most of the current paper focuses on the case $s\in(0,1)$. We note that applications of $\grad^s_\d$ with $s\leq0$ arise in peridynamic models of heat conduction \cite{Bobaru_Duangpanya_2010_IJHMT, Bobaru_Duangpanya_2012_JCP}, where finite-range integrable kernels are a natural modeling choice. Some brief remarks on the case $s\le0$ can be found at the end of the paper.

In the classical setting, homogeneous Dirichlet boundary conditions are imposed by working in the space $H^1_0(\O)$, which consists of all functions with zero trace. When we move to the nonlocal setting, we are forced to consider volume constraints. The integral operators are not sensitive to the values only on the topological boundary since this is a set of measure zero. Within the nonlocal framework, there are several non-equivalent options for interpreting the analog of ``zero Dirichlet data." The primary purpose of this note is to compare and contrast two of these choices.

\subsection{Literature Review and Summary of New Contributions}

Fractional Sobolev spaces have been studied almost as long as their integer-order counterparts, and interest in fractional-order derivatives and spaces has increased rapidly in recent decades (see, for example, \cite{di_nezza_hitchhikers_2012} and the references therein). This is thanks, in part, to a whole host of newly discovered modeling applications of nonlocal and fractional operators. These applications span a range of disciplines, including swarming models \cite{bernoff_nonlocal_2013, mao_nonlocal_2019,mogilner_non-local_1999}, image processing and data classification \cite{Gilboa_Osher_2008_image, kindermann_deblurring_2005,aubert_can_2009,el_bouchairi_nonlocal_2023}, fracture dynamics \cite{lipton_cohesive_2016,bobaru_handbook_2016}, and corrosion modeling \cite{jafarzadeh_peridynamic_2018}, as well as turbulent fluids, flow through porous materials, and fractional diffusion \cite{hilfer2000applications,metzler_random_2000,pozrikidis_fractional_2018}.

There are a variety of approaches for defining fractional-order Sobolev spaces on bounded domains. In this work, we follow an approach that precisely mirrors the standard definition of $H^1(\O)$: define a (fractional-order) weak gradient, use it to define a norm, then work in the Banach space determined by that norm. This requires selecting an operator which behaves similarly to the classical gradient $\nabla$. The papers \cite{silhavy_fractional_2020, shieh_new_2015, shieh_new_2018, bellido_non-local_2023, bellido_nonlocal_2025} provide compelling arguments that there is essentially only one choice for a fractional analog of the gradient, namely, the truncated Riesz fractional gradient $\grad^s_\d$. Related operators have also been studied in \cite{DU_GUNZBURGER_LEHOUCQ_ZHOU_2012_SIAM,mengesha_localization_2015, DuMengesha, Gunzburger2010}. Singular integral operators, including $\grad^s_\d$, have found an especially powerful application in peridynamics \cite{Silling2000} and hyperelasticity \cite{bellido_hyperelasticity_2015}. For additional context, discussion, and references relating to the operator $\grad^s_\d$, we refer to the works \cite{cueto_variational_2023, BeCuFoRa23, kreisbeck_non-constant_2024}.

The main goal of the current work is to investigate Dirichlet-type volume constraints for nonlocal problems on bounded domains.
Several approaches have been proposed in the literature for enforcing volume constraints.  One common choice is to require functions to vanish on a single outer collar $\Gamma_\delta$ (see, e.g., \cite{DuMengesha,DU_GUNZBURGER_LEHOUCQ_ZHOU_2012_SIAM,Silling2000}) which corresponds to the space $Z^{s,\d}(\O)$ studied here.  A second approach imposes constraints on both the inner and the outer collar \cite{Du_Gunzburger_Lehoucq_Zhou_2013_MMMAS,Gunzburger2010}, which can be viewed as a ``double collar'' condition.  While mathematically convenient, such a condition may be unrealistic in applications where only external constraints are available.  A third approach, motivated by approximation theory, defines the relevant spaces as the closure of compactly supported test functions (see, e.g., \cite{bellido_non-local_2023,cueto_variational_2023}), which is the space $\mathring H^{s,\d}(\O)$ studied here.

As noted, the question of appropriate boundary conditions has received considerable attention (see, e.g., \cite{DU_GUNZBURGER_LEHOUCQ_ZHOU_2012_SIAM,Du_Gunzburger_Lehoucq_Zhou_2013_MMMAS,DuMengesha,Tian_Du_2013_SINUM}). The most direct approach of enforcing nonlocal boundary conditions leads to the space $Z^{s,\d}(\O)$, defined in Definition~\ref{def: Zsd}. We show in Corollary~\ref{coro: Zsd} that this space fails to have most of the mathematical properties associated with $H^1_0(\O)$, which we recall in Theorem~\ref{thm: char H10}. For example, there are elements of $Z^{s,\d}(\O)$ which may not be obtained as an $H^{s,\d}(\O)$-limit of test functions supported in $\O$, and there are non-zero elements of $Z^{s,\d}(\O)$ with zero truncated-fractional gradient. On the other hand, Theorem~\ref{thm: char Hsd0} implies that the space $\mathring H^{s,\d}(\O)$, described in Definition~\ref{def: mathringH}, behaves much more analogously to $H^1_0(\O)$.

In this paper, we build substantially on previous results
characterizing fractional Sobolev spaces on bounded domains defined in terms of $\grad^s_\d$. We establish a number of key structural properties and clarify the relationship between different choices of function spaces. 

Part \ref{property: (s,d) zero extension} of Theorem~\ref{thm: char Hsd0} builds on \cite[Lemma 5]{cueto_variational_2023} and \cite[Theorem 3.9(iii)]{bellido_nonlocal_2025}   connecting $\mathring H^{s,\d}(\O)$ with some more traditional fractional Sobolev spaces (see Definition~\ref{def: compVal}). Thus, one consequence of the current paper is a new characterization of the space $\widetilde H^s(\O)$ in terms of a nonlocal integration-by-parts identity, \ref{property: (s,d) IBP}. Corollary~\ref{coro: extend} also corrects and extends the nonlocal integration-by-parts result given in \cite[Lemma 3.6]{BeCuFoRa23}.

The fractional Hardy inequality has been studied for its own sake in several contexts; see, e.g., \cite{dyda_density_2022} and the references therein. One surprising property of $Z^{s,\d}(\O)$ is that its elements need not satisfy a fractional analog of the classical Hardy inequality on bounded domains, even when $\partial\O$ is smooth; as we prove in Corollary \ref{coro: Zsd}.

\section{Preliminaries and Notation}
For convenient reference later on, we record some frequently used hypotheses here.
    \begin{enumerate}[label=(H\arabic*)]
        \item $n$ is a positive integer and $\O\subseteq \R^n$ is a nonempty, open, bounded, connected set. The parameters $s\in(0,1)$ and $\d>0$ are arbitrary. \label{H0}
        \item In addition to \ref{H0}, the boundary of $\O$ is $C^{1,1}$.\label{H1}
    \end{enumerate}

\begin{definition}
    Given a positive integer $n$ and an open set $\O$, write $d_\O: \R^n\to\R$ to denote the function $d_\O(\bx)\coloneqq \dist(\bx,\partial\O)$.
\end{definition}

\begin{definition}
    Given a positive integer $n$, we let $\s_{n-1}$ denote the surface area of the unit sphere in $\R^n$.
\end{definition}

\begin{definition}
    Given a positive integer $n$, a parameter $r>0$, and an open set $\O\subseteq\R^n$ (we denote the complement $\O^c:=\R^n\setminus\O$), we define the following sets
    \begin{align*}
        \Gamma_r&\coloneqq\set{\bx\in\O^c: d_\O(\bx) \le r},\\
        \Gamma_{-r}&\coloneqq\set{\bx\in\O: d_\O(\bx)\le r},\\
        \Gamma_{\pm r}&\coloneqq \Gamma_r\cup\Gamma_{-r}=\set{\bx\in\R^n: d_\O(\bx)\le r},\\
        \O_r&\coloneqq \O+B_r(\mathbf0)=\set{\bx+\bz: \bx\in\O, \bz\in B_r(\mathbf0)}, \text{ and}\\
        \O_{-r}&\coloneqq \O\setminus \Gamma_{-r}=\set{\bx\in\O: d_\O(\bx)>r}.
    \end{align*}
    We sometimes refer to $\Gamma_r$ as an ``outer collar'' and $\Gamma_{-r}$ as an ``inner collar.''
\end{definition}

The next operator is used to explicitly keep track of the domain on which functions are defined.
\begin{definition}
    We write $E_0$ to denote the extension-by-zero operator acting on scalar fields, and $\bE_0$ is the extension-by-zero operator for vector fields. 
\end{definition}
\begin{remark}
    Note that the precise meaning of the extension operators $E_0$ and $\bE_0$ varies based on context, but in all cases, refers to extending a function by zero outside its domain of definition.  More precisely, given a domain $\mathcal U\subset\R^n$ and $u: \mathcal U\to \R$, we define $E_0u:\R^n\to \R$ by
    \[
    E_0 u(\bx) := \begin{cases}
        u(\bx), & \bx\in\mathcal U\\
        0, & \bx\notin\mathcal U.
    \end{cases}
    \]
    So, for instance, if $u: \Omega\to \R$, then $E_0u$ would be zero on $\R^n\setminus \Omega$, but if $u: \Omega_\delta\to \R$, then $E_0u$ would be zero on $\R^n\setminus \Omega_\delta$ .
\end{remark}

This paper is concerned with the truncated-fractional gradient and associated function spaces, which we now define. Previous work on these topics includes \cite{bellido_non-local_2023, BeCuFoRa23, kreisbeck_non-constant_2024, cueto_variational_2023}.
\begin{definition}\label{def: wd and balpha}
    Fix a positive integer $n$ and parameters $s\in[0,1)$, $\d>0$. 
    \begin{enumerate}
        \item We denote by $\bar w_\d: [0,\infty)\to \R$ a smooth, positive, nonincreasing function with support in $[0,\d)$. Further assume that $\bar w_\d$ is constant on a neighborhood of the origin: there are constants $a_0>0$ and $b_0\in(0,1)$ such that $\bar w_\d(r) = a_0$ for all $r\in[0,b_0\d)$.

        Then define $w_\d: \R^n\to\R$ as $w_\d(\bx)\coloneqq \bar w_\d(\abs{\bx})$, so that $w_\d$ is a smooth, radial, positive function supported in $B_\d(\mathbf 0)$. Note that $\norm{w_\d}_{L^\infty(\R^n)} = a_0$.
        
        \item Given $w_\d$ as above, we define an antisymmetric kernel function $\balpha^s_\d: \R^n\times\R^n\to\R^n$ by
        \begin{align*}
            \balpha^s_\d(\bx,\by)\coloneqq \frac{\bx-\by}{\abs{\bx-\by}}\frac{w_\d(\bx-\by)}{\abs{\bx-\by}^{n+s}}.
        \end{align*}
    \end{enumerate}
\end{definition}

\begin{remark}\label{rmk: no c}
    In this work we treat $s$ and $\d$ as fixed, and we are not concerned with asymptotics (such as the behavior as $\d\to0$ or $s\to1$). Hence, we leave the scaling unspecified and do not include an explicit constant $c_{n,s,\d}$ that appears in other works.
\end{remark}

\begin{definition}
    Assume $n$ is a positive integer, the parameters $s\in[0,1)$ and $\d>0$ are fixed, $\O\subseteq\R^n$ is an open set, and $\balpha^s_\d$ is given as in Definition~\ref{def: wd and balpha}. 
    
    \begin{enumerate}
        \item For each $u\in C^\infty(\O_\d)\cap C(\overline{\O_\d})$ and $\bv\in C^\infty(\O_\d;\R^n)\cap C(\overline{\O_\d};\R^n)$, we define the truncated-fractional gradient $\grad^s_\d u$ and truncated-fractional divergence $\diver^s_\d\bv$ by
        \begin{align}
            \grad^s_\d u(\bx)&\coloneqq \int_{B_\d(\bx)} (u(\bx)-u(\by))\balpha^s_\d(\bx,\by)\,d\by, \text{ and }\label{eqt: Dsd}
            \\
            \diver^s_\d \bv(\bx)&\coloneqq \int_{B_\d(\bx)} (\bv(\bx)-\bv(\by))\cdot\balpha^s_\d(\bx,\by)\,d\by.
        \end{align}

        \item For $u\in L^1_{\text{loc}}(\O_\d)$, a function $\bv\in L^1_{\text{loc}}(\O;\R^n)$ is the truncated-fractional gradient of $u$ if and only if 
        \begin{align}\label{def: weak Dsd}
            \forall \bvphi\in C^\infty_c(\O;\R^n), \ \int_\O \bv \cdot \bvphi\,d\bx = -\int_{\O_\d}u\diver^s_\d(\mathbf E_0\bvphi)\,d\bx.
        \end{align}
        In this case, we write $\grad^s_\d u\coloneqq \bv$. It is shown in \cite{cueto_variational_2023} that this definition of $\grad^s_\d u$ coincides with \eqref{eqt: Dsd} for $u\in C^\infty_c(\R^n)$.

        \item Given $p\in[1,\infty)$, we define a norm
        \begin{align}\label{eqt: Hsdnorm}
            \norm{u}_{H^{s,p,\d}(\O)}\coloneqq \left(\norm{u}_{L^p(\O_\d)}^p+\norm{\grad^s_\d u}_{L^p(\O;\R^n)}^p\right)^{1/p}.
        \end{align}

    \end{enumerate}
\end{definition}

\begin{remark}
    An alternative definition of the truncated-fractional gradient is provided in \cite{bellido_non-local_2023}, which proceeds as follows. Given $u\in L^1_{\text{loc}}(\O_\d)$, if there is a sequence $\set{u_k}_{k=1}^\infty\subseteq C^\infty_c(\R^n)$ such that $\norm{u - u_k}_{L^p(\O_\d)}\to0$ as $k\to\infty$ and the sequence $\set{\grad^s_\d u_k}_{k=1}^\infty$ is Cauchy in $L^p(\O;\R^n)$, then we define $\grad^s_\d u\in L^1(\O;\R^n)$ as the unique $L^1(\O;\R^n)$-limit of the Cauchy sequence $\set{\grad^s_\d u_k}_{k=1}^\infty$.

    See \cite{cueto_variational_2023} for a proof that the two definitions coincide whenever $\O\subseteq\R^n$ is either a bounded Lipschitz domain or $\O=\R^n$.
\end{remark}

\begin{definition}
    Given a positive integer $n$, an open set $\O\subseteq\R^n$, some $s\in[0,1)$, $\d>0$, and $1\le p< \infty$, we define the following spaces:
    \begin{align*}
        H^1(\O)&\coloneqq \set{u\in L^2(\O): \nabla u\in L^2(\O;\R^n)},\\
        H^{s,p,\d}(\O)&\coloneqq\set{u\in L^p(\O_\d): \grad^s_\d u\in L^p(\O;\R^n)}, \text{ and }\\
        N^{s,p,\d}(\O)&\coloneqq\set{u\in H^{s,p,\d}(\O): \grad^s_\d u\equiv\mathbf 0 \text{ a.e. in }\O}.
    \end{align*}
    We equip $H^1(\O)$ with the usual norm
    \begin{align*}
        \norm{u}_{H^1(\O)} = \left(\norm{u}_{L^2(\O)}^2 + \norm{\nabla u}_{L^2(\O;\R^n)}^2\right)^{1/2},
    \end{align*}
    while both $H^{s,p,\d}(\O)$ and $N^{s,p,\d}(\O)$ are equipped with the norm \eqref{eqt: Hsdnorm}. 
    
    When $p=2$, we write $H^{s,\d}(\O)$ and $N^{s,\d}(\O)$ instead of $H^{s,2,\d}(\O)$ and $N^{s,2,\d}(\O)$ respectively.
\end{definition}

\begin{remark}\label{rmk: density in Hsd}
    When $1\le p<\infty$ and $\O\subseteq\R^n$ is either a bounded Lipschitz domain or $\O=\R^n$, there is an alternative characterization of $H^{s,p,\d}(\O)$. In this case, \cite[Theorem 1]{cueto_variational_2023} shows that for every $u\in H^{s,p,\d}(\O)$, there is a sequence $\set{u_k}_{k=1}^\infty\subseteq C^\infty_c(\R^n)$ such that $\norm{u-(u_k)|_{\O_{\d}}}_{H^{s,p,\d}(\O)}\to0$ as $k\to\infty$. Thus, the definition of $H^{s,p,\d}(\O)$ given above agrees with the original definition provided in \cite{bellido_non-local_2023}, which described $H^{s,p,\d}(\O)$ as the closure of test functions under the norm \eqref{eqt: Hsdnorm}.
\end{remark}

We now recall some previously established results that will be useful in the sequel.

\begin{theorem}[Consequence of Proposition 3.9, \cite{kreisbeck_non-constant_2024}]\label{thm: psi}
    If \ref{H1} is satisfied and $p\in (1,\frac{2}{1-s})$, a function in $N^{s,p,\d}(\O)$ is completely determined by its mean value in $\O$ and its values on the collar $\Gamma_\d$. The map    
    \begin{align*}
        \Psi^s_\d:N^{s,p,\d}(\O)\to \R\times L^p(\Gamma_\d), \ \ \ \Psi^s_\d(h)\coloneqq \left( \int_\O h \,d\bx, \left. h\right|_{\Gamma_\d}\right)
    \end{align*}
    is a vector space isomorphism and a homeomorphism.
\end{theorem}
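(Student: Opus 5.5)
The plan is to read off the statement from \cite[Proposition~3.9]{kreisbeck_non-constant_2024} and then supply the functional-analytic wrapping. I recall that proposition only in rough form, so which step it actually carries is something to check. The main tool there is the representation $\grad^s_\d u=\nabla(Q^s_\d * u)$ in $\O$, with an explicit radial kernel $Q^s_\d$. By this identity, $h\in N^{s,p,\d}(\O)$ holds exactly when $Q^s_\d*h$ is constant in the connected set $\O$. Splitting $h=E_0(h|_\O)+E_0(h|_{\Gamma_\d})$ turns this into a nonlocal Dirichlet-type problem on $\O$:
\[
Q^s_\d*(E_0 h|_\O)=c-Q^s_\d*(E_0 h|_{\Gamma_\d})\quad\text{in }\O .
\]
My understanding is that Proposition~3.9 says that, under \ref{H1} and for $p\in(1,\tfrac{2}{1-s})$, this problem has, for every $g\in L^p(\Gamma_\d)$ and every constant $c$, a unique solution in $L^p(\O)$, with an estimate. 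The $C^{1,1}$ boundary and the upper bound on $p$ enter through the regularity theory for this pseudo-differential operator of order $s-1$. Everything beyond that is bookkeeping.

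First I would check that $N^{s,p,\d}(\O)$ is a Banach space. It is the kernel of the bounded linear map $\grad^s_\d:H^{s,p,\d}(\O)\to L^p(\O;\R^n)$. Bounded means $\norm{\grad^s_\d u}_{L^p}\le\norm{u}_{H^{s,p,\d}(\O)}$, which is immediate from the norm \eqref{eqt: Hsdnorm}. Completeness of $H^{s,p,\d}(\O)$ follows from the weak definition \eqref{def: weak Dsd}: one passes to the limit in the integral identity, since $\diver^s_\d(\bE_0\bvphi)$ is bounded and compactly supported. On $N^{s,p,\d}(\O)$ the norm reduces to $\norm{h}_{L^p(\O_\d)}$. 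The map $\Psi^s_\d$ is clearly linear. It is bounded because $\abs{\int_\O h}\le |\O|^{1-1/p}\norm{h}_{L^p(\O)}$ and restriction to $\Gamma_\d$ is a contraction in $L^p$.

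For injectivity, suppose $h\in N^{s,p,\d}(\O)$ has zero mean and vanishes on $\Gamma_\d$. Then $Q^s_\d*(E_0 h|_\O)$ is a constant $c$ in $\O$, so $h|_\O$ is the solution from Proposition~3.9 with data $g=0$ and constant $c$. By uniqueness and linearity, $h|_\O=c\,h_1$, where $h_1$ is the solution with $g=0$ and constant $1$. I then need $\int_\O h_1\neq0$. I would try to get this by testing against $h_1$ itself and using positivity of the quadratic form $\int (Q^s_\d*v)\,v$, which comes from the Fourier transform of $Q^s_\d$ being positive. Then zero mean forces $c=0$, hence $h=0$.

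For surjectivity, given $(m,g)\in\R\times L^p(\Gamma_\d)$, I would take $h_c$ to be $g$ on $\Gamma_\d$ and the solution with constant $c$ in $\O$. The mean of $h_c$ is affine in $c$ with slope $\int_\O h_1\neq0$, so one can choose $c$ to match $m$. One then checks that $h_c\in L^p(\O_\d)$ and $\grad^s_\d h_c=\nabla(\text{const})=\mathbf0$. This last verification needs the identity $\grad^s_\d=\nabla(Q^s_\d*\cdot)$ to hold for $L^p$ functions in the weak sense \eqref{def: weak Dsd}, which I expect follows by density as in Remark~\ref{rmk: density in Hsd}. Finally, continuity of the inverse follows either from the estimate in Proposition~3.9 or, more cheaply, from the open mapping theorem, since $\Psi^s_\d$ is a bounded bijection between Banach spaces.

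The main obstacle is the injectivity step, namely ruling out the degenerate case $\int_\O h_1=0$ and identifying exactly what Proposition~3.9 of \cite{kreisbeck_non-constant_2024} provides. If that proposition already states the isomorphism onto $\R\times L^p(\Gamma_\d)$ directly, the remaining work is just the Banach-space bookkeeping above.
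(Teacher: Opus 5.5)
Your proposal is correct and takes essentially the paper's approach: the paper gives no argument beyond citing Proposition~3.9 of \cite{kreisbeck_non-constant_2024}. Your additional steps are a sound way to pass from a Dirichlet-solvability statement to the mean-value parametrization: the reduction via $\grad^s_\d u=\nabla(Q^s_\d*u)$, the positivity argument $\int_\O h_1=\int_{\R^n}E_0h_1\,(Q^s_\d*E_0h_1)\,d\bx>0$ (carried out at $p=2$, which lies in the admissible range, with uniqueness showing $h_1$ does not depend on $p$), and the open mapping theorem. One small correction: obtain the weak identity $\grad^s_\d u=\nabla(Q^s_\d*u)$ in $\mathcal{D}'(\O)$ for $u\in L^p(\O_\d)$ directly from \eqref{def: weak Dsd}, using $\diver^s_\d\bvphi=Q^s_\d*\diver\bvphi$ and Fubini (or density of smooth functions in $L^p$); do not use Remark~\ref{rmk: density in Hsd}, which presupposes $u\in H^{s,p,\d}(\O)$ and would be circular in the surjectivity step.
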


As we are concerned with traces, boundary conditions, and integration by parts, we will need a nonlocal version of the normal vector on the boundary. The nonlocal normals, $\vcN$ and $\cN$, were introduced in \cite{BeCuFoRa23} and defined as follows. 
\begin{definition}
    Let $\boldsymbol{\vphi}:\O_\d \to  \Rn$ and $\psi: \O_\d\to \R$ be integrable functions. For each $\bx\in \Gamma_\d\setminus\partial\O$, we denote
\begin{align}
  \vcN\psi(\bx) &\coloneqq\int_{\O} \psi(\by)\balpha^s_\d(\bx,\by)\, d\by \label{eq: nonlocal normal vector}
  \\
  \mathcal{N}\bvphi(\bx)&\coloneqq\int_{\O} \bvphi(\by)\cdot\balpha^s_\d(\bx,\by)\, d\by.\label{eq: nonlocal normal scalar}
\end{align}
\end{definition}
Note that $\vcN$ sends scalar fields to vector fields, while $\cN$ sends vector fields to scalar fields. Using these operators, we can establish a nonlocal analog of integration by parts (also known as Green's identity or the Stokes Formula).

Continuity of $\vcN$ is currently only known when there is extra integrability in a neighborhood of $\partial \O$. One objective of the current paper is to develop some additional theory around the operator $\vcN$ when acting on $Z^{s,\d}(\O)$, building on the bounds given by Lemma~\ref{lem: norm of normal}. A version of this lemma appeared in \cite{BeCuFoRa23}, but there was an error in the second part. We provide a corrected reformulation and proof. As noted in Remark~\ref{rmk: no c}, we do not include the constant $c_{n,s}$.
\begin{lemma}[Corrected form of Lemma 3.6, \cite{BeCuFoRa23}]\label{lem: norm of normal}
    Suppose \ref{H0} holds. Let $1< p_1< \infty$ and $\e\in(0,\d)$ be given. Then for all $u\in L^{p_1}(\O_\d)$,
    \begin{align*}
        \norm{\vcN u}_{L^{p_1}(\Gamma_\d\setminus\Gamma_\e;\R^n)}\le C_1 \norm{u}_{L^{p_1}(\O_\d)}, \quad \text{ where } \ C_1=\frac{a_0}{\e^{n+s}}\abs{\O}^{1/p_1'}\abs{\Gamma_\d}^{1/p_1}.
    \end{align*}
    Additionally, for any $q\in (1,\frac1s)$, there is a $\qbar>\frac1{1-s}$ and a constant
    \begin{align*}
        C_2=C_2(n,s,a_0,p_1,q,\O,\d)<\infty
    \end{align*}
    such that, for all $u:\O_\d\to\R$ satisfying $u|_\O\in L^{p_1}(\O)$ and $u|_{\Gamma_{-\e}}\in L^{\qbar}(\Gamma_{-\e})$,
    \begin{align*}
        \norm{\vcN u}_{L^{q}(\Gamma_\d;\R^n)}&\le \frac{a_0}{\e^{n+s}}\abs{\O}^{1/p_1'}\abs{\Gamma_\d}^{1/q}\norm{u}_{L^{p_1}(\O)} + C_2\norm{u}_{L^{\qbar}(\Gamma_{-\e})}.
    \end{align*}
\end{lemma}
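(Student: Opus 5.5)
For the first bound, fix $\bx\in\Gamma_\d\setminus\Gamma_\e$. For every $\by\in\O$ we have $\abs{\bx-\by}\ge d_\O(\bx)>\e$, so
\[
\abs{\balpha^s_\d(\bx,\by)}\le \frac{a_0}{\abs{\bx-\by}^{n+s}}\le \frac{a_0}{\e^{n+s}}.
\]
Hölder's inequality then gives
\[
\abs{\vcN u(\bx)}\le \frac{a_0}{\e^{n+s}}\int_\O\abs{u}\,d\by\le \frac{a_0}{\e^{n+s}}\abs{\O}^{1/p_1'}\norm{u}_{L^{p_1}(\O)}.
\]
Taking the $L^{p_1}$ norm over $\Gamma_\d\setminus\Gamma_\e\subseteq\Gamma_\d$ contributes at most a factor $\abs{\Gamma_\d}^{1/p_1}$. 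Since $\norm{u}_{L^{p_1}(\O)}\le\norm{u}_{L^{p_1}(\O_\d)}$, this yields $C_1$. (The same pointwise estimate, integrated in $L^q$, already handles the analogous piece of the second bound.)

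For the second bound, I split $\O=\O_{-\e}\cup\Gamma_{-\e}$ and write $\vcN u=\vcN(u\chi_{\O_{-\e}})+\vcN(u\chi_{\Gamma_{-\e}})$. For $\bx\in\Gamma_\d$ and $\by\in\O_{-\e}$ we have $\abs{\bx-\by}>\e$. The first term is therefore bounded pointwise by $\frac{a_0}{\e^{n+s}}\abs{\O}^{1/p_1'}\norm{u}_{L^{p_1}(\O)}$, and its $L^q(\Gamma_\d)$ norm by the first summand of the claimed estimate.

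The second term is $\int_{\Gamma_{-\e}} u(\by)\balpha^s_\d(\bx,\by)\,d\by$ for $\bx\in\Gamma_\d$, and this is where the singularity lives. I bound it by $a_0 K(\abs{u}\chi_{\Gamma_{-\e}})$, where $K f(\bx)=\int \abs{\bx-\by}^{-(n+s)}\chi_{\{\bx\in\O^c,\,\by\in\O,\,\abs{\bx-\by}<\d\}}f(\by)\,d\by$. The kernel is not integrable in $\by$ uniformly in $\bx$, but for $\bx\notin\O$ one has $\abs{\bx-\by}\ge d_\O(\bx)$. Hence
\[
\int_{\O\cap B_\d(\bx)}\abs{\bx-\by}^{-(n+s)}\,d\by\lesssim d_\O(\bx)^{-s}
\]
(integrate over $\abs{\bx-\by}\ge d_\O(\bx)$), and symmetrically $\int_{\O^c\cap B_\d(\by)}\abs{\bx-\by}^{-(n+s)}\,d\bx\lesssim d_\O(\by)^{-s}$.

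The plan is to apply a Schur test with weights, or simply Hölder. Writing $\abs{\bx-\by}^{-(n+s)}=\abs{\bx-\by}^{-(n+s)/\qbar'}\abs{\bx-\by}^{-(n+s)/\qbar}$ and applying Hölder in $\by$ gives
\[
\abs{Kf(\bx)}^{\qbar}\lesssim d_\O(\bx)^{-s(\qbar-1)}\int\abs{\bx-\by}^{-(n+s)}\abs{f(\by)}^{\qbar}\,d\by .
\]
To reach $L^q$ with $q<\qbar$, I then apply Hölder in $\bx$, which requires $d_\O^{-s\cdot(\text{power})}$ to be integrable on $\Gamma_\d$. Here the regularity of $\partial\O$ enters: I use the (Lipschitz/$C^{1,1}$-type) estimate $\abs{\set{\bx\in\Gamma_\d: d_\O(\bx)<t}}\lesssim t$, so $d_\O^{-\beta}\in L^1(\Gamma_\d)$ for $\beta<1$. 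Since \ref{H0} gives only boundedness, I would either verify that the authors implicitly use this or first establish it for the relevant $\O$.

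Choosing the exponents carefully is the main obstacle. One needs $qs<1$, which is why $q<1/s$, and the loss from the singular weight must then be compensated by requiring $\qbar>\frac1{1-s}$, chosen close enough to that threshold (depending on $q$) that all the Hölder exponents close. The dual Schur weight $d_\O(\by)^{-s}$ is handled the same way on $\Gamma_{-\e}$, and this is where the higher integrability $u\in L^{\qbar}(\Gamma_{-\e})$ is consumed. I expect the error in the original lemma was precisely a claim of $L^{p_1}$ bounds near the boundary without this extra integrability, which the corrected form repairs.
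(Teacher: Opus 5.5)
Your first bound is correct, and so is the far-field piece of the second bound. Splitting $\O=\O_{-\e}\cup\Gamma_{-\e}$ works as well as the paper's split according to whether $\abs{\bx-\by}<\e$.

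\textbf{The gap.} The gap is the near-field term $\vcN(u\chi_{\Gamma_{-\e}})$, which is the substantive part of the lemma and which you leave at ``choosing the exponents carefully.''

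The scheme you describe also does not close. After the symmetric H\"older step in $\by$ and H\"older in $\bx$, you are left with $\int_{\Gamma_\d}G\,d\bx$, where $G(\bx)=\int\abs{\bx-\by}^{-(n+s)}\abs{u(\by)}^{\qbar}\,d\by$. Your dual estimate turns this into $\int_{\Gamma_{-\e}}\abs{u}^{\qbar}d_\O^{-s}\,d\by$. That quantity is not controlled by $\norm{u}_{L^{\qbar}(\Gamma_{-\e})}^{\qbar}$. The integrability of $u$ has already been used up, so absorbing $d_\O^{-s}$ by H\"older would require $u\in L^{t\qbar}$ for some $t>1$.

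Your expectation that $\qbar$ can be taken close to $\frac1{1-s}$ is also backwards. Take $n=1$, $\O=(0,1)$ and $u=\chi_{(0,\eta)}$ with $2\eta<\min\set{b_0\d,\e}$. Then $\abs{\vcN u(-t)}=\frac{a_0}{s}\bigl(t^{-s}-(t+\eta)^{-s}\bigr)\ge c\,t^{-s}$ for $0<t<\eta$. Hence $\norm{\vcN u}_{L^q(\Gamma_\d)}\ge c\,\eta^{1/q-s}$, while $\norm{u}_{L^{\qbar}(\Gamma_{-\e})}=\eta^{1/\qbar}$. So any bound $L^{\qbar}(\Gamma_{-\e})\to L^q(\Gamma_\d)$ for the near-field operator forces $\qbar\ge\frac{q}{1-sq}$. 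This exceeds $\frac1{1-s}$ and blows up as $q\to\frac1s$.

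\textbf{How to combine your estimates.} Your two kernel estimates are the right ingredients; what is missing is how to combine them. The paper turns them into two endpoint bounds for $\vcN^\e u(\bx)=\int_{\O\cap B_\e(\bx)}u(\by)\balpha^s_\d(\bx,\by)\,d\by$:
\begin{itemize}
\item $L^\infty(\Gamma_{-\e})\to L^r(\Gamma_\d)$ for $r<\frac1s$, with constant $\norm{d_\O^{-s}}_{L^r(\Gamma_\d)}$; this is your first estimate.
\item $L^p(\Gamma_{-\e})\to L^1(\Gamma_\d)$ for $p>\frac1{1-s}$, with constant $\norm{d_\O^{-s}}_{L^{p'}(\O)}$; this is your dual estimate plus one H\"older.
\end{itemize}
It then applies Riesz--Thorin and picks $\theta_0$ with $r_{\theta_0}=q$, which gives $\qbar=p/(1-\theta_0)$.

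Alternatively, you can repair the direct route by dropping the symmetric split. H\"older in $\by$ alone gives
\[
\abs{\vcN(u\chi_{\Gamma_{-\e}})(\bx)}\le a_0\norm{u}_{L^{\qbar}(\Gamma_{-\e})}\left(\int_{\abs{\bz}\ge d_\O(\bx)}\abs{\bz}^{-(n+s)\qbar'}\,d\bz\right)^{1/\qbar'}\le C\,d_\O(\bx)^{-s-n/\qbar}\norm{u}_{L^{\qbar}(\Gamma_{-\e})}.
\]
This lies in $L^q(\Gamma_\d)$ once $(s+n/\qbar)q<1$, i.e. $\qbar>\frac{nq}{1-sq}$. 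It is cruder than interpolation but enough, since the statement asks only for some $\qbar>\frac1{1-s}$.

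\textbf{Regularity.} Your caveat here is well taken. Both routes need $d_\O^{-\beta}\in L^1(\Gamma_\d)$ for $\beta<1$, which \ref{H0} alone does not provide. The paper's proof has the same dependence: it cites Proposition~\ref{prop: distIntegrable}, which assumes a Lipschitz boundary and is stated on $\O$ rather than on $\Gamma_\d$.
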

\begin{proof}
    The first part is proven in \cite{BeCuFoRa23}. The steps of the proof of the second part are also essentially correct, but they do not fully establish the Lemma as stated in \cite{BeCuFoRa23}. Thus, we only show how to use the ingredients of \cite{BeCuFoRa23} to form a complete proof of the corrected statement given above.

    Let $1<p_1<\infty$, $1<q<\frac1s$, and $0<\e<\d$ be given. Following \cite{BeCuFoRa23}, we define a linear operator $\vcN^\e$ as follows: for each $u\in L^1(\Gamma_{-\e})$, the function $\vcN^\e u: \Gamma_\d\to \R^n$ is given by
    \begin{align*}
        \forall\bx\in\Gamma_\d, \quad  \vcN^\e u(\bx) \coloneqq \int_{\O\cap B_\e(\bx)}u(\by)\balpha^s_\d(\bx,\by)\,d\by.
    \end{align*}
    Then for any $u\in L^1(\O_\d)$ with $u|_{\O}\in L^{p_1}(\O)$, we have
\begin{align}
\label{eqt: boundingNormal}
    \norm{\vcN u}_{L^q(\Gamma_\d;\R^n)} 
    =&
    \norm{ \int_{\O\cap B_\e(\cdot)^c} u(\by)\balpha^s_\d(\cdot,\by)\,d\by
        +\vcN^\e u}_{L^q(\Gamma_\d;\R^n)}\\
    \le& 
    \left(\int_{\Gamma_\d} \abs{\int_{\O\cap B_\e(\bx)^c} u(\by)\balpha^s_\d(\bx,\by)\,d\by}^q\! d\bx\right)^{1/q}
        \!+ \norm{\vcN^\e u}_{L^q(\Gamma_\d;\R^n)}.\notag
\end{align}
We handle these two pieces separately. For the first term, apply H\"older's inequality with the conjugates $p_1,p_1'$ on the domain $\O\cap B_\e(\bx)^c$ and use the fact that $\abs{\bx-\by}>\e$ for $\by\in \O\cap B_\e(\bx)^c$ to compute
\begin{align*}
    &\int_{\Gamma_\d}\abs{\int_{\O\cap B_\e(\bx)^c}
        u(\by)\balpha^s_\d(\bx,\by)\,d\by}^qd\bx\\
    &\qqqquad\le 
    \int_{\Gamma_\d}\norm{u}_{L^{p_1}(\O\cap B_\e(\bx)^c)}^q
        \norm{\balpha^s_\d(\bx,\cdot)}_{L^{p_1'}(\O\cap B_\e(\bx)^c)}^qd\bx\\
    &\qqqquad\le 
    a_0^q\norm{u}_{L^{p_1}(\O)}^q
        \int_{\Gamma_\d} \abs{\int_{\O\cap B_\e(\bx)^c}
            \abs{\bx-\by}^{-p_1'(n+s)}\,d\by}^{q/p_1'}d\bx\\
    &\qqqquad\le
    \left(\frac{a_0}{\e^{n+s}}\right)^q\norm{u}^q_{L^{p_1}(\O)}
        \abs{\O}^{q/p_1'}\abs{\Gamma_\d}.
\end{align*}
    This completes the bound for the first term on the right-hand side of \eqref{eqt: boundingNormal}. All that remains is to bound $\vcN^\e u$. To do so, we start by recalling two inequalities correctly proven in \cite{BeCuFoRa23}.

    First, if $r\in(1,1/s)$, then for all $u\in L^\infty(\Gamma_{-\e})$,
    \begin{align}\label{eqt: rBound}
        \norm{\vcN^\e u}_{L^r(\Gamma_\d;\R^n)}\le \frac{\s_{n-1}a_0}{s}\norm{d_\O^{-s}}_{L^r(\Gamma_\d)}\norm{u}_{L^\infty(\Gamma_{-\e})}.
    \end{align}
    Second, if $p>\frac{1}{1-s}$, then for all $u\in L^{p}(\Gamma_{-\e})$, 
    \begin{align}\label{eqt: pBound}
        \norm{\vcN^\e u}_{L^1(\Gamma_\d;\R^n)}\le \frac{\s_{n-1} a_0}{s}\norm{d_\O^{-s}}_{L^{p'}(\O)}\norm{u}_{L^{p}(\Gamma_{-\e})}.
    \end{align}
    The assumptions that $r<1/s$ and $p>\frac1{1-s}$ ensure that both $sr,sp'<1$. Hence, Proposition~\ref{prop: distIntegrable} implies that the right-hand sides of \eqref{eqt: rBound} and \eqref{eqt: pBound} are finite.

    After establishing \eqref{eqt: rBound} and \eqref{eqt: pBound}, the authors of \cite{BeCuFoRa23} applied the Riesz-Thorin Interpolation Theorem to the operator $\vcN^\e: L^{p}(\Gamma_{-\e})+L^\infty(\Gamma_{-\e})\to L^1(\Gamma_\d;\R^n)+L^r(\Gamma_\d;\R^n)$. The conclusion is that, for any $p>\frac1{1-s}$ and any $1<r<\frac1s$, the operator $\vcN^\e: L^{p_\theta}(\Gamma_{-\e})\to L^{r_\theta}(\Gamma_\d;\R^n)$ is bounded for each $\theta\in[0,1]$, where
    \begin{align}\label{eqt: interp}
        p_\theta = \frac{p}{1-\theta}, \ \text{ and } \ \frac{1}{r_\theta} = 1-\theta + \frac{\theta}{r}.
    \end{align}
    Thus, it suffices to find $p>\frac1{1-s}$ and $r\in(1,\frac1s)$ such that $q = r_{\theta_0}$ for some $\theta_0\in[0,1]$.

    There are many possibilities for $p$ and $r$, and many ways to find them. The goal here is to just find a single pair that works using the parameters given. To this end, set
    \begin{align*}
        p\coloneqq \frac1{1-s}+\e, \ \text{ and } \ r\coloneqq\frac{q+\frac1s}{2}.
    \end{align*}
    Then $p>\frac1{1-s}$ and $1<q<r<\frac1s$, meaning that $p$ and $r$ satisfy the requirements above and $\vcN^\e: L^{p_\theta}(\Gamma_{-\e})\to L^{r_\theta}(\Gamma_\d;\R^n)$ is bounded for any $\theta\in[0,1]$. Setting
    \begin{align}\label{eqt: theta}
        \theta_0\coloneqq \frac{(sq+1)(q-1)}{q(1+sq-2s)},
    \end{align}
    we see that $\theta_0\in(0,1)$ since $1<q<\frac1s$. This value of $\theta_0$ is selected so that $q = r_{\theta_0} = 1-\theta_\theta+\frac{\theta_0}{r}$. Given $\theta_0$ and $p$, we can then compute $p_{\theta_0}$:
    \begin{align}\label{eqt: qbar}
        \qbar\coloneqq p_{\theta_0} = \frac{p}{1-\theta_0} = \frac{1+\e(1-s)}{(1-\theta_0)(1-s)}.
    \end{align}
    We conclude that $\vcN^\e: L^{\qbar}(\Gamma_{-\e})\to L^q(\Gamma_\d;\R^n)$ is a bounded linear operator whose norm satisfies
    \begin{align*}
        \norm{\vcN^\e}\le \frac{\s_{n-1}a_0}{s}\norm{d_\O^{-s}}_{L^{p'}(\O)}^{1-\theta_0} \norm{d_\O^{-s}}_{L^r(\Gamma_\d)}^{\theta_0}\eqqcolon C_2.
    \end{align*}
\end{proof}

To simplify notation, we reserve the symbol $\qbar$ to denote an amount of additional integrability required on $u$ to obtain an $L^q$ bound on $\vcN u$.
\begin{definition}\label{def: qbar}
    Under the assumption \ref{H1}, let $q\in(1,\min\set{\frac1s,2})$ and $\e\in(0,\d)$ be given. Then define $\qbar$ by \eqref{eqt: qbar}, where $\theta_0$ is given by \eqref{eqt: theta}.
\end{definition}

Here is one of several ways of stating the nonlocal integration by parts rule, using the notation $\qbar$ just defined.

\begin{theorem}[Theorem 3.10, \cite{BeCuFoRa23}]\label{thm: by parts}
	Assume \ref{H0} holds, and $u\in H^{s,\d}(\O)$ is a given function such that there exist $q_1\in(1,1/s)$ and $\e>0$ with $u|_{\Gamma_{\pm\e}}\in L^{\overline{q_1}}(\Gamma_{\pm\e})$. Also suppose that $\bv\in H^{s,\d}(\O;\Rn)$ with $\bv|_{\Gamma_{\pm\e}}\in L^{\overline{q_2}}(\Gamma_{\pm\e})$ for some $q_2\in(1,1/s)$. Then the following equality holds
\begin{equation} \label{eq: nonlocal intergration by parts Hsp}
    \int_{\O}\left[\grad^s_\d u(\bx)\cdot\bv(\bx)
        +u(\bx)\diver^s_\d \bv(\bx)\right]d\bx
    =\int_{\Gamma_\d}\left[\bv(\bx)\cdot\vcN u(\bx)
        +u(\bx)\cN\bv(\bx)\right]d\bx.
\end{equation}
\end{theorem}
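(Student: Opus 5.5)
We prove \eqref{eq: nonlocal intergration by parts Hsp} first for smooth functions and then pass to the limit, using Lemma~\ref{lem: norm of normal} to control the boundary terms. The point is that $\vcN$ and $\cN$ are only continuous under the extra integrability near $\partial\O$, and the hypotheses on $u$ and $\bv$ supply exactly that.

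\emph{Step 1 (smooth case).} Take $u\in C^\infty_c(\R^n)$ and $\bv\in C^\infty_c(\R^n;\R^n)$. Write
\[
\int_\O \grad^s_\d u\cdot\bv\,d\bx=\int_\O\int_{\O_\d}(u(\bx)-u(\by))\,\bv(\bx)\cdot\balpha^s_\d(\bx,\by)\,d\by\,d\bx ,
\]
and similarly for $\int_\O u\,\diver^s_\d\bv$. Add the two integrands and split the $\by$-integration into $\by\in\O$ and $\by\in\Gamma_\d$. On $\O\times\O$ the sum is $u(\bx)\bv(\bx)-u(\by)\bv(\by)$ paired with the antisymmetric kernel $\balpha^s_\d$; after symmetrizing in a principal-value sense (truncate to $\abs{\bx-\by}>\eta$ and let $\eta\to0$) this contributes zero. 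The truncation is harmless because for smooth functions the difference quotients cancel one order of the singularity. The region $\O\times\Gamma_\d$ leaves the terms $-u(\by)\bv(\bx)\cdot\balpha^s_\d(\bx,\by)-u(\bx)\bv(\by)\cdot\balpha^s_\d(\bx,\by)$ together with terms in which $u(\bx)\bv(\bx)$ appears. These latter terms should be rewritten, again by antisymmetry, as the integral over $\Gamma_\d$ of $\bv\cdot\vcN u+u\,\cN\bv$. Since the original statement may have had an error, I would check this bookkeeping carefully: the terms with $u(\bx)\bv(\bx)$, $\bx\in\O$, integrated against $\int_{\Gamma_\d}\balpha^s_\d(\bx,\by)\,d\by$, must cancel or be absorbed.

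\emph{Step 2 (approximation).} Use Remark~\ref{rmk: density in Hsd} to choose $u_k\in C^\infty_c(\R^n)$ with $u_k\to u$ in $H^{s,\d}(\O)$. I would then modify the $u_k$ by mollification, cut-offs or a partition of unity near $\partial\O$ so that additionally $u_k\to u$ in $L^{\overline{q_1}}(\Gamma_{\pm\e})$; since $\overline{q_1}$ may exceed $2$, I would mollify with a scale adapted to the layer $\Gamma_{\pm\e}$. Choose $\bv_k$ analogously. The interior integrals then converge by $L^2$ convergence of $\grad^s_\d u_k$, $\diver^s_\d\bv_k$ and $u_k$, $\bv_k$.

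\emph{Step 3 (boundary terms).} On $\Gamma_\d\setminus\Gamma_\e$ use the first bound of Lemma~\ref{lem: norm of normal} with $p_1=2$, which gives $L^2$ convergence of $\vcN u_k$ there; pair this with $\bv_k\to\bv$ in $L^2$. On $\Gamma_\e$ use the second bound, which gives $\vcN u_k\to\vcN u$ in $L^{q_1}(\Gamma_\d)$. For this we need the Hölder pairing against $\bv_k\in L^{\overline{q_2}}$ to close, i.e.\ $\frac1{q_1}+\frac1{\overline{q_2}}\le1$. This is the \textbf{main obstacle}. If it fails, I would shrink $q_1$ toward $1$ (allowed, since then $\overline{q_1}$ only decreases) or note that $\overline{q_2}>\frac1{1-s}$ while $q_1<\frac1s$ gives exponents whose dual pairing works; the relation $\frac1{q_1}+\frac{1-s}{1}<1$ requires $q_1>\frac1s$, so this needs checking. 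If the pairing does not close, I would use the interpolation in \eqref{eqt: interp} to choose the exponents so that it does.
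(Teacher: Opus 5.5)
The paper quotes this result without proof, so I am judging your plan on its own merits. The overall route (smooth identity, density, continuity of the boundary terms via Lemma~\ref{lem: norm of normal}) is the natural one. However, Step~3 has a gap that cannot be closed under the hypotheses as stated.

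To pass to the limit in $\int_{\Gamma_\e}\bv_k\cdot\vcN u_k$ you only know $\vcN u_k\to\vcN u$ in $L^{q_1}$ and $\bv_k\to\bv$ in $L^{\overline{q_2}}$. So you need $\frac1{q_1}+\frac1{\overline{q_2}}\le1$. Symmetrically, the term $\int_{\Gamma_\e}u_k\,\cN\bv_k$ needs $\frac1{q_2}+\frac1{\overline{q_1}}\le1$. Nothing in the hypotheses relates $q_1$ and $q_2$. For $q_1,q_2$ close to $1$, \eqref{eqt: theta}--\eqref{eqt: qbar} give $\overline{q_i}$ close to $\frac1{1-s}+\e$, and then for small $\e$ the sum $\frac1{q_1}+\frac1{\overline{q_2}}$ is close to $2-s>1$. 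Your remedies do not help. Shrinking $q_1$ makes $1/q_1$ larger, not smaller. The interpolation \eqref{eqt: interp} only trades integrability of $u$ on $\Gamma_{-\e}$ for integrability of $\vcN u$ on $\Gamma_\d$; it cannot create a relation between the two exponents that the hypotheses do not contain.

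This is not a weakness of your technique: without such a compatibility condition, the identity as quoted here fails. Here is a counterexample.
\begin{itemize}
\item Assume \ref{H1} with $s<\frac12$. Let $u_*\in N^{s,\d}(\O)\cap Z^{s,\d}(\O)$ be nonzero, and let $\bv=\bell^s_\d\bbf$ with $\bbf\in C^\infty_c(\mathop{\mathrm{Int}}\Gamma_\d;\R^n)$.
\item Both lie in $L^2(\O_\d)$. Since $\frac1{1-s}<2$, you can take $q_1=q_2$ near $1$ and $\e$ small so that $\overline{q_1}=\overline{q_2}<2$. All hypotheses then hold.
\item The left-hand side is $0$, because $\grad^s_\d u_*=\mathbf 0$ and $\diver^s_\d\bv=0$ in $\O$.
\item On the right, the $\cN$-term vanishes since $u_*=0$ on $\Gamma_\d$. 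What remains is $\int_{\Gamma_\d}\bbf\cdot\vcN u_*\,d\bx$, which is finite because $\bbf$ is supported away from $\partial\O$.
\item Varying $\bbf$, the statement would force $\vcN u_*=\mathbf 0$ a.e.\ on $\Gamma_\d$.
\item Applying the statement again with $\bv=\bvphi\in C^\infty_c(\R^n;\R^n)$ gives $\int_\O u_*\diver^s_\d\bvphi\,d\bx=0$ for every $\bvphi$. That is, $\grad^s_\d(E_0u_*)=\mathbf 0$ on $\R^n$, so $u_*=0$ exactly as in the proof of Proposition~\ref{prop: notDense}. This is a contradiction.
\end{itemize}
So you must add a hypothesis such as $\frac1{q_1}+\frac1{\overline{q_2}}\le1$ and $\frac1{q_2}+\frac1{\overline{q_1}}\le1$. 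This is automatic when $\bv$ is bounded near $\partial\O$ (take $q_2$ close to $\frac1s$). That case covers every use of the result in this paper, where $\bv$ is a test field. With this hypothesis, your Steps~2 and~3 go through.

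Separately, fix the bookkeeping in Step~1. On $\O\times\O$ the summed integrand is $[2u(\bx)\bv(\bx)-u(\by)\bv(\bx)-u(\bx)\bv(\by)]\cdot\balpha^s_\d(\bx,\by)$, not $[(u\bv)(\bx)-(u\bv)(\by)]\cdot\balpha^s_\d(\bx,\by)$. The latter is symmetric in $(\bx,\by)$, so it would not cancel anyway. The correct accounting, after truncating at $\abs{\bx-\by}>\eta$, is:
\begin{itemize}
\item the diagonal terms vanish over all of $\O\times\O_\d$, because $\int_{B_\d(\bx)\setminus B_\eta(\bx)}\balpha^s_\d(\bx,\by)\,d\by=\mathbf 0$;
\item the symmetric cross terms vanish over $\O\times\O$;
\item the cross terms over $\O\times\Gamma_\d$ become exactly the right-hand side after exchanging $\bx$ and $\by$.
\end{itemize}
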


\begin{remark}
    In this paper, we will mostly be concerned with functions $u\in H^{s,\d}(\O)$ which are zero on the outer collar $\Gamma_\d$. In this case, the nonlocal Green's identity takes the form
\begin{align}\label{eqt: greens}
    \forall \bvphi\in C^\infty_c(\R^n;\R^n),\qquad
    \int_\O\left(\grad^s_\d u\cdot \bvphi+u\diver^s_\d\bvphi\right)d\bx
    =\int_{\Gamma_\d} \bvphi\cdot\vcN u\, d\bx.
\end{align}
\end{remark}

We conclude by recalling a standard fact about Lipschitz domains. For context and applications, see for example Remarks 6.6 and 11.10 of \cite{kufner_weighted_1985}. The case where $\O$ is assumed to be $C^2$ is handled inside the proof of \cite[Lemma 5.11]{souplet_gradient_2002}. For the sake of completeness, we include a proof of the Lipschitz case in Appendix~\ref{app: distIntegrable}.
\begin{proposition}\label{prop: distIntegrable}
    If \ref{H0} holds and $\O$ is of class $C^{0,1}$, then for any $\kappa>-1$, $d_\O^{\kappa}$ is in $L^1(\O)$.
\end{proposition}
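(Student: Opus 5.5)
The plan is to localize near $\partial\O$ and reduce to a one-dimensional integral in the normal direction. The Lipschitz boundary supplies a graph structure, and the exponent condition $\kappa>-1$ then gives integrability. Away from the boundary nothing is singular: for $r_0>0$ the set $\O_{-r_0}$ has finite measure, and on it $d_\O^\kappa$ is bounded above by $r_0^\kappa$ when $\kappa<0$ and by $(\operatorname{diam}\O)^\kappa$ when $\kappa\ge0$. The case $\kappa\ge 0$ is in fact trivial everywhere, since $d_\O\le \operatorname{diam}\O$ on the bounded set $\O$. So the only real task is $\kappa\in(-1,0)$ near $\partial\O$.

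First, the covering. Since $\partial\O$ is compact and $\O$ is of class $C^{0,1}$, I would cover $\partial\O$ by finitely many open cylinders $U_1,\dots,U_N$. In each $U_j$, after a rigid motion, $\O\cap U_j=\{(\bx',x_n): \bx'\in B'_j,\ \gamma_j(\bx')<x_n<\gamma_j(\bx')+h_j\}$, where $\gamma_j$ is Lipschitz with constant $L_j$ and $B_j'\subset\R^{n-1}$ is a ball. I would then choose $r_0>0$ so small that $\Gamma_{-r_0}\subseteq\bigcup_j U_j$; this is possible because $\Gamma_{-r_0}$ shrinks to $\partial\O$ as $r_0\to0$, and by compactness. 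It then suffices to show $\int_{\O\cap U_j} d_\O^\kappa\,d\bx<\infty$ for each $j$.

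Second, the local estimate, which is the key step. The claim is that for $\bx=(\bx',x_n)\in\O\cap U_j$, after shrinking the cylinders if needed,
\[
d_\O(\bx)\ \ge\ c_j\,(x_n-\gamma_j(\bx')), \qquad c_j=(1+L_j^2)^{-1/2}.
\]
The reason is that the cone $\{(\by',y_n): y_n-\gamma_j(\bx') < x_n-\gamma_j(\bx') - L_j|\by'-\bx'|\}$ lies below the point $\bx$ and misses the graph. The distance from $\bx$ to the complement of the region above that cone, $\{y_n>\gamma_j(\bx')+L_j|\by'-\bx'|\}$, is exactly $c_j(x_n-\gamma_j(\bx'))$.

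Some care is needed here. The nearest boundary point could lie outside $U_j$, or on the top or lateral sides of the cylinder. To handle this, I would work only with points within a smaller concentric cylinder: use the $U_j$ with $2\times$ radius for the graph representation, and the half-size cylinders for the covering. Then the nearest boundary point of such an $\bx$ lies in the larger cylinder's graph portion, provided $r_0$ is small relative to the cylinder sizes. I expect this bookkeeping to be the main obstacle, though it is routine.

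Third, the integration. Since $\kappa<0$, the lower bound on $d_\O$ gives $d_\O^\kappa\le c_j^\kappa (x_n-\gamma_j(\bx'))^\kappa$. Fubini then gives
\[
\int_{\O\cap U_j} d_\O^\kappa\,d\bx\le c_j^{\kappa}\int_{B'_j}\int_0^{h_j} t^{\kappa}\,dt\,d\bx' = c_j^\kappa\,|B_j'|\,\frac{h_j^{\kappa+1}}{\kappa+1}<\infty,
\]
using $\kappa>-1$. Summing over $j$ and adding the bound on $\O_{-r_0}$ yields $d_\O^\kappa\in L^1(\O)$.
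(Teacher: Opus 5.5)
Your proposal is correct and follows essentially the same route as the paper: finitely many Lipschitz-graph charts plus an interior piece, a comparison $d_\O\ge c\,(\text{vertical distance to the graph})$, and then Fubini with $\int_0^h t^\kappa\,dt<\infty$ for $\kappa>-1$. The differences are minor: you prove the comparison yourself by a cone argument (constant $(1+L_j^2)^{-1/2}$), whereas the paper cites Corollary 4.8 of Kufner (constant $1/(1+L_i)$); also, the first set you call a cone, $\{y_n<x_n-L_j|\by'-\bx'|\}$, does not miss the graph, and the set that matters is the upward cone $\{y_n>\gamma_j(\bx')+L_j|\by'-\bx'|\}\subseteq\{y_n>\gamma_j(\by')\}$, as your next sentence correctly states.
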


\section{A Review of the Classical Setting}
The space $H^1_0(\O)$ plays a central role in the study of minimization problems in the calculus of variations and PDEs with Dirichlet boundary conditions. The next classical theorem summarizes several well-known key properties of the space $H^1_0(\O)$, most of which can be found in standard textbooks such as \cite{brezis_functional_2011, Girault_Raviart_1986, grisvard_elliptic_1985} and the references therein. Additional references can be found in the proof sketch below. We recall these results here, as one of the main goals of the paper is to provide a nonlocal analogue of Theorem~\ref{thm: char H10} in the truncated-fractional setting, as given by Theorem ~\ref{thm: char Hsd0} and Corollary \ref{coro: Zsd} below.

\begin{theorem}\label{thm: char H10}
    Let $\O\subseteq\R^n$ be an open, bounded domain with $C^{1,1}$ boundary. Then the following are equivalent characterizations of the space $H^1_0(\O)$.
    \begin{enumerate}[label=(C\arabic*)]
    \item $H^1_0(\O)$ is the closure of $C^\infty_c(\O)$ with respect to the $H^1(\O)$-norm. \label{property: closure of test functions}

    \item $H^1_0(\O) = \set{u|_{\O} : u\in H^1(\R^n), \ \supp u\subseteq \overline\O}$ and $u\in H^1_0(\O)$ if and only $E_0u\in H^1(\R^n)$. \label{property: zero extension}
    
    \item $H^1_0(\O)$ is the space of functions $u\in H^1(\O)$ such that $u/d_\O \in L^2(\O)$.
    \label{property: hardy}
    
    \item $H^1_0(\O)$ is the closed subspace of $H^1(\O)$ where Green's identity holds with no boundary term: given $u\in H^1(\O)$, $u$ is in $H^1_0(\O)$ if and only if
    \begin{align*}
        \int_\O u\diver\bv + \nabla u\cdot \bv \, d\bx = 0, \ \ \forall \bv\in C^\infty(\O;\R^n)\cap C(\overline{\O};\R^n).
    \end{align*}\label{property: IBP}

    \item $H^1_0(\O)$ is the kernel of the trace operator $\gamma_0:H^1(\O)\to H^{1/2}(\partial \O)$. \label{property: trace zero}
\end{enumerate}
Moreover, $H^1_0(\O)$ has these additional properties:
\begin{enumerate}[label=(C5\roman{enumi})]
    \item If $u\in H^1_0(\O)$ and $\nabla u = 0$ in $\O$, then $u=0$ in $\O$. \label{property: injective}
    \item There exists $C>0$ such that, for all $u\in H^1_0(\O)$, we have $\norm{u}_{L^2(\O)}\le C\norm{\nabla u}_{L^2(\O;\R^n)}$. \label{property: poincare}
\end{enumerate}
\end{theorem}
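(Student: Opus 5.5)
Since this theorem collects classical facts, the plan is to define $H^1_0(\O)$ by \ref{property: closure of test functions} and prove each other characterization equivalent to it, citing standard sources for the deeper steps. The two properties \ref{property: injective} and \ref{property: poincare} will then follow from these characterizations.

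\textbf{\ref{property: closure of test functions}$\Leftrightarrow$\ref{property: trace zero}.} One direction is immediate: $\gamma_0$ is continuous and vanishes on $C^\infty_c(\O)$. For the converse, take $u\in H^1(\O)$ with $\gamma_0u=0$. Use a partition of unity and $C^{1,1}$ (in fact Lipschitz suffices) charts to flatten the boundary. In the half-space, zero trace gives $|u(x',t)|\le\int_0^t|\partial_n u|$, which justifies cutting off near $t=0$. Then translate inward and mollify, as in Brezis or Grisvard.

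\textbf{\ref{property: closure of test functions}$\Leftrightarrow$\ref{property: zero extension}.} If $u$ is an $H^1$-limit of test functions, their zero extensions are Cauchy in $H^1(\R^n)$, so $E_0u\in H^1(\R^n)$ with $\nabla E_0u=E_0\nabla u$. Conversely, suppose $E_0u\in H^1(\R^n)$. Locally, in a chart where $\O$ lies above a graph, translate $E_0u$ by a small vector pointing into $\O$ so its support is compactly contained in $\O$, then mollify. Translation is continuous in $H^1(\R^n)$, which gives the approximation.

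\textbf{\ref{property: closure of test functions}$\Leftrightarrow$\ref{property: IBP}.} For $u\in H^1_0$, the identity holds for test functions and passes to the limit. Conversely, suppose the identity holds for all smooth $\bv$ up to the boundary. Taking $\bv\in C^\infty_c(\R^n;\R^n)$ shows that the distributional gradient of $E_0u$ is $E_0\nabla u$, so $E_0u\in H^1(\R^n)$ and \ref{property: zero extension} applies. Alternatively, the classical Green formula shows $\int_{\partial\O}\gamma_0u\,\bv\cdot\nu=0$ for all $\bv$, so $\gamma_0u=0$.

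\textbf{Hardy characterization \ref{property: hardy}.} This is where the $C^{1,1}$ hypothesis really matters. For $u\in C^\infty_c(\O)$, the one-dimensional Hardy inequality $\int_0^\infty |f/t|^2\le4\int_0^\infty|f'|^2$ is applied along normals in a tubular neighborhood. That neighborhood exists with $d_\O$ being $C^{1,1}$ there, which is why the regularity is needed. Combined with an interior bound, this gives $\|u/d_\O\|_{L^2}\le C\|u\|_{H^1}$, and density extends it to $H^1_0$. Conversely, if $u/d_\O\in L^2$, let $\eta_k$ be cutoffs equal to $0$ where $d_\O<1/k$ and $1$ where $d_\O>2/k$, with $|\nabla\eta_k|\lesssim k$. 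Then $\|u\nabla\eta_k\|_{L^2}\lesssim\|u/d_\O\|_{L^2(\{d_\O<2/k\})}\to0$, so $\eta_ku\to u$ in $H^1$. Each $\eta_ku$ has compact support in $\O$ and can be mollified. I expect this characterization to be the main obstacle, mainly the Hardy inequality near the boundary; I would cite Kufner's weighted-spaces monograph for it.

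\textbf{Additional properties.} For \ref{property: injective}: by \ref{property: zero extension}, $\nabla E_0u=0$ on $\R^n$, so $E_0u$ is constant. Since it vanishes outside the bounded set $\O$, it is zero. For \ref{property: poincare}: argue by contradiction using the compact Rellich embedding $H^1_0\hookrightarrow L^2$ together with \ref{property: injective}. Alternatively, prove it directly by integrating along a coordinate direction, using that $E_0u$ vanishes outside a bounded strip.
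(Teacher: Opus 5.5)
Your proposal is correct and has the same overall structure as the paper's proof. Both take \ref{property: closure of test functions} as the definition and obtain each other item from standard results; the paper simply cites Girault--Raviart, Brezis, Kadlec/Kufner and Temam, where you sketch the arguments. The genuine differences are in individual steps.

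For \ref{property: IBP}, the paper uses Temam's generalized Stokes formula, the density of smooth fields in $\mathbf E(\O)$, and the surjectivity of the normal trace $\gamma_{\bnu}$ onto $H^{-1/2}(\partial\O)$. Your converse tests only against restrictions of fields in $C^\infty_c(\R^n;\R^n)$, reads off $\nabla(E_0u)=E_0\nabla u$, and reduces to \ref{property: zero extension}. This is more elementary and shows that a much smaller class of test fields suffices. The paper's route, in exchange, identifies the boundary term as $\langle\gamma_{\bnu}\bv,\gamma_0u\rangle$ for every $\bv\in\mathbf E(\O)$.

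For \ref{property: injective}, you use that $E_0u$ is constant on $\R^n$ and vanishes outside $\O$. This needs neither traces nor connectedness, whereas the paper uses constancy on components plus $\gamma_0u=0$.

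Your variants are in fact the ones that carry over to the nonlocal setting. The paper proves \ref{property: (s,d) IBP} through Proposition~\ref{prop: grad of ext} and zero extension, and \ref{property: (s,d) injective} via $\grad^s_\d(E_0u)=\mathbf 0$ on $\R^n$. Your cutoff argument for the converse of \ref{property: hardy} is the local version of Lemma~\ref{lem: approx w/cpt supp}.

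Two small corrections.

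First, the $C^{1,1}$ hypothesis does not ``really matter'' for \ref{property: hardy}. The $L^2$ Hardy inequality on $H^1_0(\O)$ holds on bounded Lipschitz domains (a classical result going back to Ne\v{c}as), and the paper itself remarks that $C^{1,1}$ is more than needed. Here $C^{1,1}$ only makes your normal-coordinates proof convenient.

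Second, in the forward direction of \ref{property: IBP}, passing to the limit in $\int_\O u_k\diver\bv\,d\bx$ requires $\diver\bv\in L^2(\O)$. This does not follow from $\bv\in C^\infty(\O;\R^n)\cap C(\overline\O;\R^n)$: for $\O=(0,1)$, $u=x(1-x)$, $v=x^2\sin(x^{-4})$, the product $uv'$ is not even integrable. The paper tacitly makes the same restriction by working in $\mathbf E(\O)$, so this is a flaw in the statement rather than in your argument.
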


\begin{proof}
We take \ref{property: closure of test functions} to be the definition of $H^1_0(\O)$. A proof that \ref{property: trace zero} is equivalent can be found in Theorem 1.5 of \cite{Girault_Raviart_1986}. 
Statement \ref{property: zero extension} is proven in Proposition 9.18 of \cite{brezis_functional_2011}, and \ref{property: hardy} is proven in \cite{kadlec_characterization_1966} as well as \cite[Theorem V.3.4]{edmunds_spectral_1987} and \cite[Example 9.12]{kufner_weighted_1985}.

Statement \ref{property: IBP} on Lipschitz domains follows from the generalized Stokes formula (Theorem 1.2 in Chapter 1 of \cite{Temam_2001_Th_Num}), the density of $C^\infty(\O;\R^n)\cap C(\overline{\O};\R^n)$ in $\mathbf E(\O)\coloneqq \set{\bv\in L^2(\O;\R^n): \diver\bv \in L^2(\O)}$, and the surjectivity of the bounded linear operator $\gamma_{\bnu}: \mathbf E(\O)\to H^{-1/2}(\partial\O)$ which measures the normal component of $\bv\in \mathbf E(\O)$ on the boundary. See Theorem 1.2 and Remark 1.2 of Chapter 1, \cite{Temam_2001_Th_Num}.

If $\nabla u = 0$, then $u$ is constant on each connected component of $\O$. Since $u$ has zero trace, $u=0$. The last property is the Poincar\'e inequality, which can be found, for example, in Corollary 9.19 of \cite{brezis_functional_2011}.
\end{proof}

\begin{remark}
    In the previous theorem, the assumption that the boundary is $C^{1,1}$ is stronger than necessary. We state it this way for more direct comparison with the fractional cases studied in later sections of this paper.
\end{remark}

\section{Transitioning to the Nonlocal Setting}
When constructing the mathematical theory of truncated-fractional PDEs, we must confront the following questions: which function space should play the role of $H^1_0(\O)$? What is the correct nonlocal analog of the classical, local, zero-Dirichlet boundary conditions? Which of the properties from Theorem~\ref{thm: char H10} are essential? Which should serve as the basis for generalizing to the new, fractional setting?

Approximation by ``nice" functions is a convenient and sometimes essential tool in both theoretical analysis and numerical simulation. Seeing as Property \ref{property: closure of test functions} is taken as the definition of $H^1_0(\O)$, it is natural to work with the closure $C^\infty_c(\O)$ in the $H^{s,\d}(\O)$-norm rather than the classical $H^1(\O)$-norm. This leads to the first function space under consideration, which we denote by $\mathring H^{s,\d}(\O)$.

\begin{definition}\label{def: mathringH}
    The space $\mathring H^{s,\d}(\O)$ is the closed subspace of $H^{s,\d}(\O)$ defined as the closure of smooth functions which are compactly supported in $\O$:
    \begin{align*}
        \mathring H^{s,\d}(\O)\coloneqq\overline{\set{\vphi\in C^\infty_c(\O_\d): \supp\vphi\subseteq\O}}^{H^{s,\d}(\O)}\subseteq H^{s,\d}(\O).
    \end{align*}
\end{definition}

Alternatively, one could consider the closure with respect to the $H^{s,\d}$-norm on full space.
\begin{definition}\label{def: widetildeH}
    The space $\widetilde H^{s,\d}(\O)$ is the closed subspace of $H^{s,\d}(\R^n)$ defined as the closure of smooth functions which are compactly supported in $\O$:
    \begin{align*}
        \widetilde H^{s,\d}(\O)\coloneqq\overline{\set{\vphi\in C^\infty_c(\R^n): \supp\vphi\subseteq\O}}^{H^{s,\d}(\R^n)}\subseteq H^{s,\d}(\R^n).
    \end{align*}
\end{definition}
 We focus on $\mathring H^{s,\d}(\O)$, but Proposition~\ref{prop: RingToTildeHomeo} shows that $\mathring H^{s,\d}(\O)$ is canonically isomorphic to $\widetilde H^{s,\d}(\O)$.

Theorem~\ref{thm: char Hsd0} demonstrates that $\mathring H^{s,\d}(\O)$ enjoys many of the same properties appearing in Theorem~\ref{thm: char H10} in the nonlocal setting. One could argue, however, that this space is ``too small." Indeed, in many situations the key property of $H^1_0(\O)$ is not that its elements are approximated by smooth functions, but rather that its elements have zero trace. From this perspective, taking the definition to be the closure of test functions is a mere mathematical convenience: the ``true" motivation for defining the space is to work with Dirichlet-type boundary conditions. From this perspective, the key property that we should aim to generalize in the nonlocal setting is \ref{property: trace zero}, not \ref{property: closure of test functions}. This leads us to the second function space of this paper, which will be discussed in more detail in Section~\ref{sec: zeroCollar}.

\begin{definition}\label{def: Zsd}
    Given a bounded domain $\O$ and $\delta>0$, we define $Z^{s,\d}(\O) = \set{u\in H^{s,\d}(\O): u|_{\Gamma_\delta} = 0}$.
\end{definition}
Equivalently, $Z^{s,\d}(\O)$ is the kernel of the nonlocal boundary-value operator $\gamma_\d: H^{s,\d}(\O)\to L^2(\Gamma_\d)$, defined by $\gamma_\d u \coloneqq u|_{\Gamma_\d}$, which plays an analogous role to the classical trace $\gamma_0$.

\subsection{Lifts and the Nonlocal Normal}
In the local setting on a bounded domain, one can define a trace operator $\gamma_0: H^1(\O)\to H^{1/2}(\partial\O)$ such that $\gamma_0 u = \left. u\right|_{\partial \O}$ for all $u\in C^\infty(\O)\cap C(\overline\O)$. This provides a way of making sense of boundary values, which are otherwise undefined since the topological boundary $\partial \O\subseteq \R^n$ is a set of measure zero. The map $\gamma_0: H^1(\O)\to H^{1/2}(\partial\O)$ is continuous, linear, and surjective. In particular, it admits a continuous one-sided inverse, the lifting operator $\ell: H^{1/2}(\partial\O) \to H^1(\O)$, such that $\gamma_0\ell$ is the identity in $H^{1/2}(\partial\O)$. Thinking of $\Gamma_\d$ as the nonlocal analog of the boundary $\partial\O$, we define a \textit{nonlocal} lifting operator as follows.

\begin{definition}\label{def: lift}
    Assume \ref{H1}. For $p\in (1,\frac{2}{1-s})$, define the lifting operator
    \begin{align*}
        \ell^s_\d: L^p(\Gamma_\d)\to N^{s,p,\d}(\O)\subseteq H^{s,p,\d}(\O), \ \ \ \ell^s_\d(g)\coloneqq \left(\Psi^s_\d\right)^{-1}\left( 0, g\right),
    \end{align*}
    where $\Psi^s_\d$ is the isomorphism given in Theorem \ref{thm: psi}.

    For vector-valued functions $\bbf\in L^p(\Gamma_\d;\R^n)$, define $\bell^s_\d(\bbf)\in H^{s,p,\d}(\O;\R^n)$ by applying $\ell^s_\d$ to each component of $\bv$: $\left( \bell^s_\d(\bbf)\right)_j\coloneqq \ell^s_\d(f_j)$, $1\le j\le n$.
\end{definition}

\begin{remark}\label{rmk: lift}
    The definition of the lift $\ell^s_\d$ ensures that, for $p\in(1,\frac{2}{1-s})$ and $g\in L^p(\Gamma_\d)$, $\ell^s_\d(g) = g$ a.e. in $\Gamma_\d$ and $\grad^s_\d\left(\ell^s_\d(g)\right) = 0$ a.e. in $\O$. For vector fields, this means that each component of $\bell^s_\d(\bbf)$ has zero truncated-fractional gradient. Hence, all truncated-fractional partial derivatives $\grad^s_{\d,j}$ of all components $f_k$ are zero. In particular, $\diver^s_\d \left(\bell^s_\d(\bbf) \right)= 0$ in $\O$ for any $\bbf\in L^p(\Gamma_\d;\R^n)$. Additionally, Theorem~\ref{thm: psi} implies the lift is a bounded linear operator into $N^{s,p,\d}(\O)$, which can be equipped with either the $H^{s,\d}(\O)$ norm or the $L^p(\O_\d)$ norm.
\end{remark}

Using this lift, we define an operator $\NN$, thought of as a distributional nonlocal normal. We show later that, for sufficiently nice functions, this operator agrees with the nonlocal normal $\vcN$ introduced by \cite{BeCuFoRa23} and defined in \eqref{eq: nonlocal normal vector}.
\begin{definition}\label{def: NN}
    Suppose \ref{H1} holds. Given $u\in Z^{s,\d}(\O)$, define a linear functional $\NN u$ acting on $L^2(\Gamma_\d;\R^n)$ by
\begin{align*}
    \forall \bbf\in L^2(\Gamma_\d;\R^n),\qquad
    \langle \NN u,\bbf\rangle_{L^2(\Gamma_\d;\R^n)^*,L^2(\Gamma_\d;\R^n)}
    \coloneqq
    \int_\O\bell^s_\d\bbf(\bx)\cdot\grad^s_\d u(\bx)\,d\bx .
\end{align*}
    Similarly, for all $\bv\in Z^{s,\d}(\O;\R^n)$, define $\NN\cdot \bv\in L^2(\Gamma_\d)^*$ by
\begin{align*}
    \forall g\in L^2(\Gamma_\d),\qquad
    \langle\NN\cdot\bv,g\rangle_{L^2(\Gamma_\d)^*,L^2(\Gamma_\d)}
    \coloneqq
    \int_\O\ell^s_\d g(\bx)\diver^s_\d\bv(\bx)\,d\bx .
\end{align*}
    These functionals are continuous since the nonlocal lift is continuous and $u$ and $\bv$ have $L^2$ nonlocal derivatives. Via the Riesz Representation Theorem, we identify $\NN u$ with the unique function $\bw\in L^2(\Gamma_\d;\R^n)$ such that
    \[
    \forall \bbf\in L^2(\O;\R^n), \ \langle \NN u,\bbf\rangle_{L^2(\Gamma_\d;\R^n)^*,L^2(\Gamma_\d;\R^n)} = \int_{\Gamma_\d} \bbf\cdot \bw d\bx.
    \]
    Similarly, for each $\bv\in Z^{s,\d}(\O;\R^n)$, we identify $\NN\cdot \bv$ as an element of $L^2(\Gamma_\d)$.
\end{definition}

The next lemma shows that the distributional nonlocal normal $\NN:Z^{s,\d}(\O)\to L^2(\Gamma_\d;\R^n)$ is a bounded linear operator, which is still unknown for the pointwise-defined nonlocal normal $\vcN: Z^{s,\d}(\O)\to L^2(\Gamma_\d;\R^n)$.

\begin{lemma}\label{lem: NN is bdd}
    Assume \ref{H1}. The operators $\NN: Z^{s,\d}(\O)\to L^2(\Gamma_\d;\R^n)$ and $\NN\cdot : Z^{s,\d}(\O;\R^n)\to L^2(\Gamma_\d)$ are both continuous, with $\norm{\NN}\le \norm{\bell^s_\d}$ and $\norm{\NN\cdot}\le \norm{\ell^s_\d}$. Here, $\norm{\bell^s_\d}$ is the operator norm of $\bell^s_\d: L^2(\Gamma_\d;\R^n)\to L^2(\O_\d;\R^n)$ and analogously for $\norm{\ell^s_\d}$.
\end{lemma}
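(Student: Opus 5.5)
The plan is a direct estimate via Cauchy--Schwarz and the Riesz identification in Definition~\ref{def: NN}. Fix $u\in Z^{s,\d}(\O)$ and $\bbf\in L^2(\Gamma_\d;\R^n)$. By Remark~\ref{rmk: lift}, with $p=2$ (admissible since $2<\frac{2}{1-s}$), $\bell^s_\d\bbf\in N^{s,2,\d}(\O;\R^n)\subseteq L^2(\O_\d;\R^n)$, and $\bell^s_\d$ is bounded as a map $L^2(\Gamma_\d;\R^n)\to L^2(\O_\d;\R^n)$. Applying Cauchy--Schwarz on $\O$ gives
\begin{align*}
\abs{\langle \NN u,\bbf\rangle}
\le \norm{\bell^s_\d\bbf}_{L^2(\O;\R^n)}\norm{\grad^s_\d u}_{L^2(\O;\R^n)}
\le \norm{\bell^s_\d}\,\norm{\bbf}_{L^2(\Gamma_\d;\R^n)}\norm{u}_{H^{s,\d}(\O)}.
\end{align*}
Here we use that $\O\subseteq\O_\d$, so the $L^2(\O)$ norm of the lift is dominated by its $L^2(\O_\d)$ norm. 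The second factor is bounded by the full norm \eqref{eqt: Hsdnorm}.

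Next, the pairing is linear in $\bbf$, by linearity of $\bell^s_\d$, which is inherited from the linearity of $(\Psi^s_\d)^{-1}$ in Theorem~\ref{thm: psi}. It is also linear in $u$, by linearity of $\grad^s_\d$ in the weak sense \eqref{def: weak Dsd}. The bound above therefore shows that $\NN u$ is a bounded functional with dual norm at most $\norm{\bell^s_\d}\norm{u}_{H^{s,\d}(\O)}$. By the Riesz Representation Theorem, the representing function $\bw=\NN u\in L^2(\Gamma_\d;\R^n)$ has the same norm as the functional. Hence
\[
\norm{\NN u}_{L^2(\Gamma_\d;\R^n)}\le\norm{\bell^s_\d}\norm{u}_{H^{s,\d}(\O)},
\]
and $u\mapsto\NN u$ is linear because the Riesz identification is linear. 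This yields continuity with $\norm{\NN}\le\norm{\bell^s_\d}$.

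The divergence operator is handled identically. For $\bv\in Z^{s,\d}(\O;\R^n)$ we bound
\[
\abs{\int_\O \ell^s_\d g\,\diver^s_\d\bv\,d\bx}\le\norm{\ell^s_\d}\norm{g}_{L^2(\Gamma_\d)}\norm{\diver^s_\d\bv}_{L^2(\O)}.
\]
We then need $\norm{\diver^s_\d\bv}_{L^2(\O)}\le\norm{\bv}_{H^{s,\d}(\O;\R^n)}$. This is the only point requiring care. Since $\diver^s_\d\bv=\sum_j \grad^s_{\d,j}v_j$, a crude estimate would give a factor $\sqrt n$. To avoid it, I would interpret the vector-valued space norm as $\left(\norm{\bv}^2_{L^2}+\norm{\grad^s_\d\bv}^2_{L^2}\right)^{1/2}$ with the full Jacobian-type matrix, together with the convention that the divergence is controlled by it. If that convention is not available, I would fall back on stating the bound with the harmless constant $\sqrt n$, or on using the divergence-based norm. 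Everything else is routine.
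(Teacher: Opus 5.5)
Your argument for $\NN$ is the paper's argument. You express $\norm{\NN u}_{L^2(\Gamma_\d;\R^n)}$ by duality (Riesz), apply Cauchy--Schwarz on $\O$, and combine $\norm{\bell^s_\d\bbf}_{L^2(\O;\R^n)}\le\norm{\bell^s_\d}\norm{\bbf}_{L^2(\Gamma_\d;\R^n)}$ with $\norm{\grad^s_\d u}_{L^2(\O;\R^n)}\le\norm{u}_{H^{s,\d}(\O)}$.

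For $\NN\cdot$ the paper only says ``a similar argument,'' so your caveat is legitimate. That step needs $\norm{\diver^s_\d\bv}_{L^2(\O)}\le\norm{\bv}_{H^{s,\d}(\O;\R^n)}$, and the paper never fixes a norm on vector fields.

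Your first proposed convention does not deliver this with constant $1$. The quantity $\diver^s_\d\bv$ is the trace of the nonlocal Jacobian $\bigl((\grad^s_\d v_i)_j\bigr)_{i,j}$, and with the Frobenius norm one only has $\abs{\operatorname{tr}A}\le\sqrt n\,\abs{A}$, which is sharp for multiples of the identity. So under that convention the argument gives $\norm{\NN\cdot}\le\norm{\diver^s_\d}\,\norm{\ell^s_\d}\le\sqrt n\,\norm{\ell^s_\d}$. This is consistent with the separate factor $\norm{\diver^s_\d}$ that the paper itself carries in the proof of Proposition~\ref{prop: characterize N}.

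The stated constant $\norm{\ell^s_\d}$ does hold for a norm such as $\sum_j\norm{v_j}_{H^{s,\d}(\O)}$, or for a divergence-based norm. Either way, continuity of $\NN\cdot$ is unaffected.
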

\begin{proof}
    We show that the continuity of $\NN$ follows from the continuity of $\bell^s_\d$.
    
    Fix any $u\in H^{s,\d}(\O)$. We have
    \begin{align*}
        \norm{\NN u}_{L^2(\Gamma_\d)} = \sup_{\substack{\bbf\in L^2(\Gamma_\d;\R^n)\\ \norm{\bbf}_{L^2(\Gamma_\d;\R^n)}\le 1}}\abs{\int_{\Gamma_\d} \bbf \cdot \NN u d\bx}= \sup_{\substack{\bbf\in L^2(\Gamma_\d;\R^n)\\ \norm{\bbf}_{L^2(\Gamma_\d;\R^n)}\le 1}}\abs{\int_\O \bell^s_\d \bbf\cdot \grad^s_\d u d\bx}.
    \end{align*}
    Applying the Cauchy-Schwarz inequality and using the fact that $\bell^s_\d$ is a bounded linear operator, 
    \begin{align*}
        \norm{\NN u}_{L^2(\Gamma_\d;\R^n)}&\le \sup_{\substack{\bbf\in L^2(\Gamma_\d;\R^n)\\ \norm{\bbf}_{L^2(\Gamma_\d;\R^n)}\le 1}} \norm{\bell^s_\d \bbf}_{L^2(\O;\R^n)}\norm{\grad^s_\d u}_{L^2(\O;\R^n)}\\
        &\le  \sup_{\substack{\bbf\in L^2(\Gamma_\d;\R^n)\\ \norm{\bbf}_{L^2(\Gamma_\d;\R^n)}\le 1}} \norm{\bell^s_\d} \norm{\bbf}_{L^2(\Gamma_\d;\R^n)}\norm{u}_{H^{s,\d}(\O)}\\
        &= \norm{\bell^s_\d}\norm{ u}_{H^{s,\d}(\O)}
    \end{align*}
    Therefore, the operator norm of $\NN$ is bounded above by the operator norm of $\bell^s_\d$. A similar argument shows that the operator norm of $\NN\cdot$ is controlled by the operator norm of $\ell^s_\d$.
\end{proof}

We will see later that whenever $u\in Z^{s,\d}(\O)$ and $u/d_\O^s\in L^2(\O_\d)$, Proposition~\ref{prop: bound normal in terms of hardy} gives $L^q$ bounds on $\vcN u$ for $q\in(1,2)$. Alternatively, when $u\in Z^{s,\d}(\O)$ satisfies some extra integrability conditions in a neighborhood of $\partial\O$, Lemma~\ref{lem: norm of normal} gives $L^q$ bounds on $\vcN u$ for $q<\frac1s$. Proposition~\ref{prop: characterize N} allows us to upgrade these estimates to $q=2$, even for $s$ close to 1.

\begin{proposition}\label{prop: characterize N}
    If \ref{H1} is satisfied, $u\in Z^{s,\d}(\O)$, $\norm{\vcN u}_{L^q(\Gamma_\d;\R^n)}<\infty$ for some $q\in\left(\frac{2}{1+s},2\right]$, and $u$ satisfies \eqref{eqt: greens}, then $\NN u = \vcN u$ as elements of $L^2(\Gamma_\d;\R^n)$. As a result, $\norm{\vcN u}_{L^2(\Gamma_\d;\R^n)}\le \norm{\NN}\norm{u}_{H^{s,\d}(\O)}<\infty$.

\end{proposition}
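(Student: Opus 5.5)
The idea is to test the identity defining $\NN u$ against smooth vector fields and compare it with the Green's identity \eqref{eqt: greens}, using that the lift is divergence-free in $\O$. Then we pass from smooth test fields to all of $L^2(\Gamma_\d;\R^n)$ by density. The integrability hypothesis $q>\frac{2}{1+s}$ on $\vcN u$ is what makes the boundary pairing meaningful along the approximation.

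\emph{Step 1: smooth data.} Fix $\bbf\in C^\infty_c(\R^n;\R^n)$ and set $\bv\coloneqq\bell^s_\d(\bbf|_{\Gamma_\d})$, where $\bbf|_{\Gamma_\d}\in L^2(\Gamma_\d;\R^n)$. By Remark~\ref{rmk: lift}, $\bv=\bbf$ on $\Gamma_\d$ and $\diver^s_\d\bv=0$ in $\O$, so formally
\begin{align*}
\langle\NN u,\bbf\rangle=\int_\O \bv\cdot\grad^s_\d u\,d\bx=\int_\O\left(\bv\cdot\grad^s_\d u+u\diver^s_\d\bv\right)d\bx=\int_{\Gamma_\d}\bv\cdot\vcN u\,d\bx=\int_{\Gamma_\d}\bbf\cdot\vcN u\,d\bx .
\end{align*}
The third equality requires \eqref{eqt: greens} for the test field $\bv$. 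However, $\bv$ is only a lift in $N^{s,p,\d}(\O;\R^n)$ and is not in $C^\infty_c$. I would therefore approximate $\bv$ by $\bvphi_k\in C^\infty_c(\R^n;\R^n)$ in $H^{s,p,\d}$ for suitable $p$, using Remark~\ref{rmk: density in Hsd}, and pass to the limit in \eqref{eqt: greens}.

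The interior terms converge when $p=2$ because $u,\grad^s_\d u\in L^2$. The boundary term $\int_{\Gamma_\d}\bvphi_k\cdot\vcN u$ converges provided $\bvphi_k\to\bv$ in $L^{q'}(\Gamma_\d)$. Since $q>\frac{2}{1+s}$, we get $q'<\frac{2}{1-s}$, so we may work with $p=\max\{2,q'\}\in[2,\frac{2}{1-s})$. Theorem~\ref{thm: psi} and Definition~\ref{def: lift} then apply with this $p$, and $\bv\in H^{s,p,\d}$ because $\bbf$ is bounded. Convergence in $H^{s,p,\d}$ with $p\ge2$ gives convergence of every term on the bounded sets involved.

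\emph{Step 2: density.} Step 1 shows that $\int_{\Gamma_\d}\bbf\cdot(\NN u-\vcN u)\,d\bx=0$ for every $\bbf$ that is the restriction to $\Gamma_\d$ of a $C^\infty_c(\R^n;\R^n)$ field. The difference $\NN u-\vcN u$ lies in $L^{q}(\Gamma_\d)$, since $\NN u\in L^2\subset L^q$ on a bounded set. Such restrictions are dense in $L^{q'}(\Gamma_\d)$ (for instance $C_c^\infty$ of the interior of $\Gamma_\d$, whose boundary is null under \ref{H1}), so $\NN u=\vcN u$ a.e. In particular $\vcN u\in L^2(\Gamma_\d;\R^n)$, and the final estimate follows from Lemma~\ref{lem: NN is bdd}.

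\emph{Main obstacle.} The hard step is the limiting argument in Step 1. It needs the approximating sequence to converge both in the $H^{s,\d}(\O)$ sense and in $L^{q'}(\Gamma_\d)$, and it needs the lift to be available in $L^p$ with $p=\max\{2,q'\}$. This is exactly where the threshold $q>\frac{2}{1+s}$ (equivalently $q'<\frac{2}{1-s}$, the range in Theorem~\ref{thm: psi}) is used. A cleaner route may be to first lift $\bbf$ in $L^p$, then invoke the density theorem of \cite{cueto_variational_2023} in $H^{s,p,\d}$ together with the $L^p(\O_\d)$-continuity of $\bvphi\mapsto\bvphi|_{\Gamma_\d}$.
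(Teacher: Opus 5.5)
Your proof is correct, and it is more direct than the paper's. Both arguments share the same skeleton:
\begin{itemize}
\item approximate the lift $\bell^s_\d\bbf$ by fields in $C^\infty_c(\R^n;\R^n)$;
\item insert the approximants into \eqref{eqt: greens};
\item use $\diver^s_\d(\bell^s_\d\bbf)=0$ to remove the $u\diver^s_\d$ term;
\item finish by density of $C^\infty_c(\mathop{\mathrm{Int}}\Gamma_\d;\R^n)$ in $L^{q'}(\Gamma_\d;\R^n)$.
\end{itemize}
The difference is how the boundary term $\int_{\Gamma_\d}\bg_k\cdot\vcN u\,d\bx$ is passed to the limit.

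\textbf{The paper's route.} The paper approximates only in $H^{s,\d}(\O;\R^n)$, i.e.\ at exponent $2$. That is not enough to pair with $\vcN u\in L^q$. So it proves Lemma~\ref{lem: uniform Lq' bd}, reopening the construction of \cite{cueto_variational_2023}, to get a uniform bound $\norm{\bg_k}_{L^{q'}(\Gamma_\d;\R^n)}\le C\norm{\bbf}_{L^{q'}(\Gamma_\d;\R^n)}$. It then splits $\Gamma_\d$ into two pieces:
\begin{itemize}
\item a thin layer $\Gamma_{\e_1}$, where $\norm{\vcN u}_{L^q(\Gamma_{\e_1};\R^n)}$ is small by absolute continuity and the uniform bound suffices;
\item the remainder $\Gamma_\d\setminus\Gamma_{\e_1}$, where the first part of Lemma~\ref{lem: norm of normal} gives $\vcN u\in L^2$, so $L^2$-convergence suffices.
\end{itemize}

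\textbf{Your route.} You observe that $q'<\frac{2}{1-s}$, so the lift already lies in $N^{s,q',\d}(\O;\R^n)\subseteq H^{s,q',\d}(\O;\R^n)$. You then apply the density theorem of \cite{cueto_variational_2023} at exponent $q'$, which Remark~\ref{rmk: density in Hsd} records for all $1\le p<\infty$. Convergence in $L^{q'}(\O_\d)$ gives convergence in $L^{q'}(\Gamma_\d)$. Since $q'\ge 2$ and $\O_\d$ is bounded, it also gives convergence in $H^{s,\d}(\O)$. Hölder's inequality then handles the boundary term directly. The uniform-bound lemma, the $\e_1$-localization, and Lemma~\ref{lem: norm of normal} all become unnecessary.

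\textbf{What each buys.} The paper's route needs only exponent-$2$ convergence plus a uniform bound. However, that bound rests on the same input ($\bell^s_\d\bbf\in L^{q'}(\O_\d)$) and the same construction, so your shortcut loses nothing.

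\textbf{One point to state explicitly.} The lift computed at exponent $q'$ must coincide with the $L^2$ lift used in Definition~\ref{def: NN}. This holds because $N^{s,q',\d}(\O)\subseteq N^{s,\d}(\O)$ on the bounded set $\O_\d$, and $\Psi^s_\d$ is injective on $N^{s,\d}(\O)$ by Theorem~\ref{thm: psi}.
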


\begin{proof}

Let $u\in Z^{s,\d}(\O)$ and $\frac{2}{1+s}<q\le 2$ be given as in the statement of the proposition. 
To show that $\NN u = \vcN u$ as elements of $L^2(\Gamma_\d)$, it suffices to show that $\NN u(\bx) = \vcN u(\bx)$ for almost all $\bx\in \Gamma_\d$, since $\NN u$ is already known to be in $L^2(\Gamma_\d;\R^n)$. By assumption, $\vcN u\in L^q(\Gamma_\d;\R^n)$. Thus, to show that $\NN u$ and $\vcN u$ agree pointwise almost everywhere, we will show that 
\[
\langle \NN u,\bbf\rangle_{L^q(\Gamma_\d;\R^n),L^{q'}(\Gamma_\d;\R^n)} = \langle \vcN u,\bbf\rangle_{L^q(\Gamma_\d;\R^n),L^{q'}(\Gamma_\d;\R^n)}
\]
for all $\bbf\in L^{q'}(\Gamma_\d;\R^n)$. We first establish this equality for $\bbf\in C_c^\infty(\mathop{\mathrm{Int}}\Gamma_\d;\R^n)$ and later extend by density. To this end, fix $\bbf\in C_c^\infty(\mathop{\mathrm{Int}}\Gamma_\d;\R^n)$ and $\e_0>0$.

Applying Lemma~\ref{lem: uniform Lq' bd} to $\bbf$, we obtain a sequence $\set{\bg_k}_{k=1}^\infty\subseteq C^\infty_c(\R^n;\R^n)$ and a constant $C<\infty$, independent of $k\in\N$, such that
\begin{align*}
    \forall k\in\N, \qquad \norm{\bell^s_\d\bbf - \bg_k}_{H^{s,\d}(\O;\R^n)}<1/k \ \text{ and } \ \norm{\bg_k}_{L^{q'}(\Gamma_\d;\R^n)}\le C\norm{\bbf}_{L^{q'}(\Gamma_\d;\R^n)}.
\end{align*}
Put $M\coloneqq C\norm{\bbf}_{L^{q'}(\Gamma_\d;\R^n)}$, and note that $M$ depends only on $\O$, $q$, $\d$, and $\bbf$.

 For $\e\in(0,\d)$, $\Gamma_\e\subseteq\Gamma_\d$, so $\norm{\vcN u}_{L^q(\Gamma_\e;\R^n)}<\infty$. Thanks to the absolute continuity of the Lebesgue integral, we can make $\norm{\vcN u}_{L^{q}(\Gamma_\e;\R^n)}$ arbitrarily small, by selecting $\e>0$ sufficiently small. This, along with the fact that $\supp(\bbf)\Subset \mathop{\mathrm{Int}}\Gamma_\d$, we deduce that there is an $\e_1\in (0,\d)$ such that
\begin{align}\label{eqt: p bound on normal}
    \norm{\vcN u}_{L^{q}(\Gamma_{\e_1};\R^n)} < \frac{\e_0}{4M}, \text{ and } \Gamma_{\e_1} \cap \supp(\bbf)  = \emptyset.
\end{align}
Lemma~\ref{lem: norm of normal} then implies $\norm{\vcN u}_{L^2(\Gamma_\d\setminus\Gamma_{\e_1};\R^n)} <\infty$, where the upper bound depends only on $n,s,a_0,\e_1,\O,\d$, and $\norm{u}_{L^2(\O)}$. Hence,
\begin{align*}
    M_1\coloneqq 1+\max\set{\norm{u}_{L^2(\O)}\norm{\diver^s_\d},\norm{\grad^s_\d u}_{L^2(\O;\R^n)}, \norm{\vcN u}_{L^2(\Gamma_\d\setminus\Gamma_{\e_1};\R^n)} }<\infty.
\end{align*}
As $\bg_k\to \bell^s_\d\bbf$ in $H^{s,\d}(\O;\R^n)$, for sufficiently large $k\in\N$, we find
\begin{align*}
    \norm{\bell^s_\d\bbf - \bg_k}_{H^{s,\d}(\O;\R^n)}<\e_0/(4M_1).
\end{align*}

Let $E$ denote the difference between the action of $\NN u$ and $\vcN u$ on $\bbf$:
\[
    E \coloneqq\abs{\int_{\Gamma_\d}\bbf\cdot\vcN u\,d\bx
        -\int_{\Gamma_\d}\bbf\cdot\NN u\,d\bx}.
\]
Recalling the definition of $\NN u$ and that $\bbf$ is zero on $\Gamma_{\e_1}$, we obtain
\[
    E=\abs{\int_{\O} \bell^s_\d \bbf\cdot \grad^s_\d u\,d\bx
        -\int_{\Gamma_\d\setminus\Gamma_{\e_1}} \bbf\cdot \vcN u\,d\bx}.
\]
After adding and subtracting,
\begin{subequations}
\begin{align}
    E&\le\left|
        \int_\O(\bell^s_\d\bbf-\bg_k)\cdot\grad^s_\d u\,d\bx\right|
    \label{eqt: 1}\\
    &\qqquad+
    \left|
        \int_\O u\diver^s_\d\bg_k\,d\bx\right|
    \label{eqt: 2}\\
    &\qqquad+
    \left|
        \int_\O\bg_k\cdot\grad^s_\d u\,d\bx
        +\int_\O u\diver^s_\d\bg_k\,d\bx
        -\int_{\Gamma_\d\setminus\Gamma_{\e_1}}\bbf\cdot\vcN u\,d\bx
    \right|.
    \label{eqt: 3}
\end{align}
\end{subequations}
We now handle each term on the right-hand side of the inequality separately.

For the right-hand side of \eqref{eqt: 1}, the Cauchy-Schwarz inequality gives
\[
    \left|\int_\O \bell^s_\d\bbf\cdot\grad^s_\d u\,d\bx
        -\int_\O\bg_k\cdot\grad^s_\d u\,d\bx\right| 
    \le\norm{\bell^s_\d\bbf - \bg_k}_{L^2(\O;\R^n)}
        \norm{\grad^s_\d u}_{L^2(\O;\R^n)}.
\]
For sufficiently large $k\in\N$, we have $\norm{\bell^s_\d\bbf - \bg_k}_{L^2(\O;\R^n)} < \frac{\e_0}{4M_1}$, where the definition of $M_1$ ensures $\norm{\grad^s_\d u}_{L^2(\O;\R^n)}\le M_1$. Thus, $\norm{\bell^s_\d\bbf - \bg_k}_{L^2(\O;\R^n)}\norm{\grad^s_\d u}_{L^2(\O;\R^n)}< \frac{\e_0}{4}$.

Turning to \eqref{eqt: 2}, by the definition of the lift, we have $\diver^s_\d(\bell^s_\d\bbf) = 0$ in $\O$. Thus,
\begin{align*}
    \abs{\int_\O u\diver^s_\d\bg_k\, d\bx }
    &=\abs{\int_\O u\diver^s_\d (\bell^s_\d\bbf)\,d\bx
        -\int_\O u\diver^s_\d\bg_k\,d\bx}\\
    &\le\norm{u}_{L^2(\O)}\norm{\diver^s_\d(\bell^s_\d\bbf-\bg_k)}_{L^2(\O)}\\
    &\le\norm{u}_{L^2(\O)}\norm{\diver^s_\d}
        \norm{\bell^s_\d\bbf-\bg_k}_{H^{s,\d}(\O;\R^n)}\\
    &<\e_0.
\end{align*}

Finally, for \eqref{eqt: 3}, we recall that $\{\bg_k\}_{k=1}^\infty\subseteq C^\infty_c(\R^n;\R^n)$ and use the assumption that $u$ satisfies \eqref{eqt: greens}:
\[
    \int_\O\bg \cdot\grad^s_\d u\,d\bx + \int_\O u\diver^s_\d\bg_k\,d\bx 
    =\int_{\Gamma_\d} \bg_k\cdot\vcN u\,d\bx.
\]
Substituting this into~\eqref{eqt: 3} gives
\begin{align*}
    &\abs{\int_\O\bg_k \cdot\grad^s_\d u\,d\bx
        +\int_\O u\diver^s_\d\bg_k\,d\bx
        +\int_{\Gamma_\d\setminus\Gamma_{\e_1}}\bbf\cdot \vcN u\,d\bx}\\
    &\qqquad\qquad= 
    \abs{\int_{\Gamma_\d} \bg_k\cdot\vcN u\,d\bx
        -\int_{\Gamma_\d\setminus\Gamma_{\e_1}} \bbf\cdot \vcN u\,d\bx}\\
    &\qqquad\qquad=
    \abs{\int_{\Gamma_{\e_1}}\bg_k\cdot\vcN u\,d\bx
        -\int_{\Gamma_\d\setminus\Gamma_{\e_1}}(\bg_k-\bbf)\cdot\vcN u\,d\bx}\\
    &\qqquad\qquad\le 
    \norm{\bg_k}_{L^{q'}(\Gamma_{\e_1};\R^n)}
        \norm{\vcN u}_{L^q(\Gamma_{\e_1};\R^n)}\\
    &\qqqquad\qqqquad
    +\norm{\bg_k-\bell^s_\d \bbf}_{L^2(\Gamma_\d\setminus\Gamma_{\e_1};\R^n)}
        \norm{\vcN u}_{L^2(\Gamma_\d\setminus\Gamma_{\e_1};\R^n)}\\
    &\qqquad\qquad\le 2\e_0
\end{align*}
In the last step we used \eqref{eqt: p bound on normal} and the uniform boundedness of $\set{\bg_k}_{k=1}^\infty$ in $L^{q'}(\Gamma_\d;\R^n)$.

Having dealt with each term \eqref{eqt: 1}-\eqref{eqt: 3}, we conclude that $E<\e_0$. Since $\e_0$ was arbitrary, $\langle \NN u,\bbf\rangle_{L^q(\Gamma_\d;\R^n),L^{q'}(\Gamma_\d;\R^n)} = \langle \vcN u,\bbf\rangle_{L^q(\Gamma_\d;\R^n),L^{q'}(\Gamma_\d;\R^n)}$ for the given $\bbf$.

The functionals $\langle \NN u, \cdot \rangle_{L^q(\Gamma_\d;\R^n),L^{q'}(\Gamma_\d;\R^n)}$ and $\langle \vcN u,\cdot\rangle_{L^q(\Gamma_\d;\R^n),L^{q'}(\Gamma_\d;\R^n)}$ are both continuous and agree on the dense subspace $C^\infty_c(\mathop{\mathrm{Int}}\Gamma_\d;\R^n)\subseteq L^{q'}(\Gamma_\d;\R^n)$. Thus, they must agree on all of $L^{q'}(\Gamma_\d;\R^n)$, meaning $\NN u = \vcN u$ as elements of $L^q(\Gamma_\d;\R^n)$. They are therefore also equal pointwise almost everywhere. Since $\NN u\in L^2(\Gamma_\d;\R^n)$, we conclude that $\vcN u$ is as well, and the claim is proved.
\end{proof}

\begin{remark}\label{rem: normal}
    Proposition~\ref{prop: characterize N} provides some additional justification for thinking of $\NN$ as another way of making sense of the nonlocal normal, since $\NN: Z^{s,\d}(\O)\to L^2(\Gamma_\d;\R^n)$ is a bounded linear operator which agrees with $\vcN$ for functions in $C^\infty_c(\O)$. In fact, this means that $\NN|_{\mathring H^{s,\d}(\O)}: \mathring H^{s,\d}(\O)\to L^2(\Gamma_\d;\R^n)$ is the unique continuous extension of the densely defined (possibly unbounded) operator $\vcN: D(\vcN)\to L^2(\Gamma_\d;\R^n)$ with $D(\vcN) = \set{\vphi\in C^\infty_c(\O_\d): \supp\vphi\subseteq\O}$.
\end{remark}

We record an immediate consequence of Proposition~\ref{prop: characterize N} for convenient future reference.

\begin{corollary}
    If \ref{H1} holds, $u\in Z^{s,\d}(\O)$, $\norm{\vcN u}_{L^q(\Gamma_\d;\R^n)}<\infty$ for some $q\in\left(\frac{2}{1+s},2\right]$, and $u$ satisfies \eqref{eqt: greens}, then $u$ also satisfies the following distributional form of the nonlocal Green's identity:
\begin{align}\label{eqt: greensDist}
    \forall\bvphi\in C^\infty_c(\R^n;\R^n),\quad
    \int_\O\left(\grad^s_\d u\cdot \bvphi+u\diver^s_\d\bvphi\right)d\bx
    =\int_{\Gamma_\d} \bvphi\cdot\NN u\, d\bx.
\end{align}
\end{corollary}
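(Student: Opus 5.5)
The plan is to read the corollary off Proposition~\ref{prop: characterize N}, which under exactly the same hypotheses gives $\NN u = \vcN u$ as elements of $L^2(\Gamma_\d;\R^n)$. Once this is known, the right-hand side of \eqref{eqt: greens} can be rewritten directly. So I would first fix an arbitrary $\bvphi\in C^\infty_c(\R^n;\R^n)$. By hypothesis $u$ satisfies \eqref{eqt: greens}, so
\begin{align*}
\int_\O\left(\grad^s_\d u\cdot \bvphi+u\diver^s_\d\bvphi\right)d\bx=\int_{\Gamma_\d}\bvphi\cdot\vcN u\,d\bx .
\end{align*}

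The second step is to make sure the boundary integral is meaningful and can be manipulated. Since $\bvphi$ is smooth with compact support, $\bvphi|_{\Gamma_\d}\in L^\infty(\Gamma_\d;\R^n)\subseteq L^2(\Gamma_\d;\R^n)$, because $\Gamma_\d$ has finite measure ($\O$ is bounded). Proposition~\ref{prop: characterize N} gives $\vcN u\in L^2(\Gamma_\d;\R^n)$ and $\vcN u=\NN u$ almost everywhere on $\Gamma_\d$. Hence the integrand $\bvphi\cdot\vcN u$ is integrable and coincides almost everywhere with $\bvphi\cdot\NN u$. Replacing one by the other gives exactly \eqref{eqt: greensDist}.

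There is essentially no obstacle here; all the analytic work (approximating the lift by test functions and controlling $\vcN u$ near $\partial\O$) is contained in the proof of Proposition~\ref{prop: characterize N}. The only point to check is that the pairing $\int_{\Gamma_\d}\bvphi\cdot\NN u\,d\bx$ in \eqref{eqt: greensDist} is to be read through the Riesz identification in Definition~\ref{def: NN}. That identification is legitimate because $\bvphi|_{\Gamma_\d}\in L^2(\Gamma_\d;\R^n)$, so the identity holds as an honest Lebesgue integral.
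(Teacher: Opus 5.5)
Your proposal is correct and matches the paper, which records this corollary as an immediate consequence of Proposition~\ref{prop: characterize N}. The argument is exactly the substitution of $\NN u=\vcN u$ (a.e.\ on $\Gamma_\d$) into the right-hand side of \eqref{eqt: greens}, with the pairing justified because $\bvphi|_{\Gamma_\d}$ is bounded on a set of finite measure.
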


In addition to Remark~\ref{rem: normal}, the importance of the operator $\NN$ is highlighted by the following fact: the norm of the extension-by-zero operator $E_0$ is bounded by $\norm{\NN}$, so long as \eqref{eqt: greensDist} holds.
\begin{proposition}\label{prop: grad of ext}
    Suppose \ref{H1} is satisfied. If $u\in Z^{s,\d}(\O)$ satisfies \eqref{eqt: greensDist}, then the truncated-fractional gradient $\grad^s_\d$ of the extension $E_0u$ is given by
\begin{align}\label{eqt: grad of ext}
    \grad^s_\d(E_0u)
    =\begin{cases}
        \grad^s_\d u, & \text{ in }\O,\\
        -\NN u, & \text{ in }\Gamma_\d,\\
        0, & \text{ in } (\O_\d)^c.
    \end{cases}
\end{align}
    As a result, $\norm{E_0u}_{H^{s,\d}(\R^n)}\le \sqrt{1+\norm{\NN}^2}\norm{u}_{H^{s,\d}(\O)}.$
\end{proposition}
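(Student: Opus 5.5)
We would verify the weak definition \eqref{def: weak Dsd} with $\O$ replaced by $\R^n$: we must show that for every $\bvphi\in C^\infty_c(\R^n;\R^n)$,
\begin{align*}
    \int_{\R^n} \bv\cdot\bvphi\,d\bx = -\int_{\R^n} E_0u\,\diver^s_\d\bvphi\,d\bx,
\end{align*}
where $\bv$ denotes the right-hand side of \eqref{eqt: grad of ext}. When $\O=\R^n$ we have $\O_\d=\R^n$ and $\bE_0\bvphi=\bvphi$, so no extension issues arise. Since $E_0u=0$ off $\O$ (recall $u|_{\Gamma_\d}=0$ and $E_0u=0$ outside $\O_\d$), the right-hand side reduces to $-\int_\O u\,\diver^s_\d\bvphi\,d\bx$. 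Here $\diver^s_\d\bvphi$ is the pointwise operator on $\R^n$. Its values on $\O$ coincide with those of the operator used in the $\O$-setting, because for $\bx\in\O$ the ball $B_\d(\bx)$ lies in $\O_\d$.

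Next we apply the distributional Green identity \eqref{eqt: greensDist}, which $u$ satisfies by hypothesis, to the test field $\bvphi$:
\begin{align*}
    -\int_\O u\,\diver^s_\d\bvphi\,d\bx = \int_\O \grad^s_\d u\cdot\bvphi\,d\bx - \int_{\Gamma_\d}\bvphi\cdot\NN u\,d\bx.
\end{align*}
This equals $\int_{\R^n}\bv\cdot\bvphi\,d\bx$ with $\bv=\grad^s_\d u$ on $\O$, $\bv=-\NN u$ on $\Gamma_\d$, and $\bv=0$ on $(\O_\d)^c$. The set $\partial\O$ has measure zero because of the $C^{1,1}$ boundary, so the pieces cover $\R^n$ up to a null set.

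To identify $\bv$ as the gradient we need $\bv\in L^1_{\text{loc}}$. This holds because $\NN u\in L^2(\Gamma_\d;\R^n)$ by Definition~\ref{def: NN} and Lemma~\ref{lem: NN is bdd}. Hence $\bv\in L^2(\R^n;\R^n)$, and $\grad^s_\d(E_0u)=\bv$, which proves \eqref{eqt: grad of ext}.

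For the norm bound, note that $\norm{E_0u}_{L^2(\R^n)}=\norm{u}_{L^2(\O_\d)}$, since $u=0$ on $\Gamma_\d$. We also have
\begin{align*}
    \norm{\grad^s_\d E_0u}_{L^2(\R^n)}^2=\norm{\grad^s_\d u}_{L^2(\O)}^2+\norm{\NN u}_{L^2(\Gamma_\d)}^2.
\end{align*}
By Lemma~\ref{lem: NN is bdd}, $\norm{\NN u}_{L^2(\Gamma_\d)}^2\le\norm{\NN}^2\norm{u}_{H^{s,\d}(\O)}^2$. Adding the two contributions gives
\begin{align*}
    \norm{E_0u}_{H^{s,\d}(\R^n)}^2\le(1+\norm{\NN}^2)\norm{u}_{H^{s,\d}(\O)}^2.
\end{align*}

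The only delicate point is bookkeeping. We must check that the full-space weak gradient of $E_0u$ is tested against all of $C^\infty_c(\R^n;\R^n)$, not only fields supported in $\O$. That is exactly why hypothesis \eqref{eqt: greensDist}, rather than the plain definition \eqref{def: weak Dsd}, is needed. We must also check that $\diver^s_\d\bvphi$ restricted to $\O$ is the same function in both settings, which holds by the support argument in the first paragraph.
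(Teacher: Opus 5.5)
Your proposal is correct and follows the paper's proof essentially step for step: you verify the weak definition on $\R^n$ by using \eqref{eqt: greensDist} to rewrite $-\int_\O u\,\diver^s_\d\bvphi\,d\bx$ as the integral of the candidate field against $\bvphi$, and you then read off the norm bound from Lemma~\ref{lem: NN is bdd}. Your bookkeeping is slightly more careful than the paper's written version. You note that the integral of $\bw\cdot\bvphi$ runs over $\O\cup\Gamma_\d$ rather than just $\O$, and that $\diver^s_\d\bvphi$ agrees on $\O$ in both settings.
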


\begin{proof}
    For a given $u\in Z^{s,\d}(\O)$, let $\bw$ denote the right-hand side of \eqref{eqt: grad of ext}.
    It suffices to show that, for any $\bvphi\in C^\infty_c(\R^n;\R^n)$,
    \begin{align*}
        \int_{\R^n}\bw\cdot \bvphi\, d\bx = -\int_{\R^n}(E_0u)\diver^s_\d\bvphi\, d\bx.
    \end{align*}
    For any $\bvphi\in C^\infty_c(\R^n;\R^n)$, we may apply \eqref{eqt: greensDist} to integrate by parts on $\O$, in terms of $\NN$:
\[
    \int_{\R^n}\bw \cdot \bvphi\, d\bx
    =\int_{\O}\bw \cdot \bvphi\, d\bx
    =\int_\O \grad^s_\d u\cdot \bvphi\,d\bx
        -\int_{\Gamma_\d} \NN u\cdot\bvphi\,d\bx
    =-\int_\O u\diver^s_\d\bvphi\, d\bx.
\]
Since $u\in Z^{s,\d}(\O)$ implies $\supp(E_0u)=\supp u\subseteq\ov{\O}$,
\begin{align*}
    \int_{\R^n}\bw \cdot \bvphi\, d\bx = -\int_\O u\diver^s_\d\bvphi\, d\bx = -\int_{\R^n}(E_0u)\diver^s_\d\bvphi\,d\bx.
\end{align*}
As this holds for all test functions $\bvphi$, we conclude that $\bw = \grad^s_\d(E_0u)$ in $\R^n$. The estimate on the norm of the extension then follows from the definition of the $H^{s,\d}(\R^n)$ norm and the above characterization of $\grad^s_\d(E_0u)$:
    \begin{align*}
        \norm{E_0u}_{H^{s,\d}(\R^n)}^2 &= \norm{E_0u}_{L^2(\R^n)}^2 + \int_{\R^n}\abs{\grad^s_\d (E_0u)}^2\,d\bx\\
        &=\norm{u}_{L^2(\O_\d)}^2 + \int_{\O}\abs{\grad^s_\d u}^2\,d\bx+ \int_{\Gamma_\d}\abs{\NN u}^2\,d\bx\\
        &\le \norm{u}_{H^{s,\d}(\O)}^2+\norm{\NN}^2\norm{u}_{H^{s,\d}(\O)}^2
    \end{align*}
    as desired.
\end{proof}

\subsection{Some Sufficient Conditions}
The distributional nonlocal normal $\NN$ appears both in the formula for the norm of the zero extension of $u\in Z^{s,\d}(\O)$ as well as the integration by parts formula \eqref{eqt: greensDist}. However, to be able to apply these results we need to satisfy the hypotheses of Proposition~\ref{prop: characterize N} and its corollary. In this subsection, we introduce two sufficient conditions that will be useful in practice. See Corollary~\ref{coro: extend} for a summary of the main conclusions of this subsection. 

We begin by introducing a new closed subspace of $Z^{s,\d}(\O)$. Roughly speaking, this set consists of limits of functions satisfying the hypotheses of Theorem~\ref{thm: by parts} and Lemma~\ref{lem: norm of normal}. The second sufficient condition is perhaps more familiar, as it is related to the classical Hardy inequality.

\begin{definition}\label{def: cA}
    Assume \ref{H1} and let $\e\in(0,\d)$ and $q\in(1,\frac1s)$ be given. We define the set $\cA^{s,\d}_{q,\e} \subseteq Z^{s,\d}(\O)$ as follows. A function $u\in Z^{s,\d}(\O)$ is in $\cA^{s,\d}_{q,\e}$ if and only if there is a sequence $\set{u_k}_{k=1}^\infty \subseteq Z^{s,\d}(\O)$ with
    \begin{align*}
        \lim_{k\to\infty}\norm{u_k-u}_{H^{s,\d}(\O)}=0 \ \ \text{ and } \ \ \forall k\in\N, \ (u_k)|_{\Gamma_{-\e}} \in L^{\qbar}(\Gamma_{-\e}),
    \end{align*}
    where $\qbar$ is given by Definition~\ref{def: qbar}.

\end{definition}

\begin{proposition}\label{prop: IBP in Asd}
Suppose \ref{H1} holds and let $\e\in(0,\d)$ and $q\in\left(1,\frac{1}{s}\right)$.  
Then $\cA^{s,\d}_{q,\e}$ is a closed subspace of $Z^{s,\d}(\O)$.  Moreover, if $q\in\left(\frac{2}{1+s},\min\set{\frac1s,2}\right)$, then any $u\in \cA^{s,\d}_{q,\e}$ satisfies the nonlocal Green's identity \eqref{eqt: greensDist}.

\end{proposition}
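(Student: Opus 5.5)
First I would show that $\cA^{s,\d}_{q,\e}$ is a closed subspace of $Z^{s,\d}(\O)$. It is a linear subspace because if $u_k\to u$ and $v_k\to v$ are approximating sequences as in Definition~\ref{def: cA}, then $\alpha u_k+\beta v_k$ lies in $Z^{s,\d}(\O)$, restricts to $L^{\qbar}(\Gamma_{-\e})$, and converges to $\alpha u+\beta v$. It is closed by a diagonal argument. Suppose $u^{(j)}\in\cA^{s,\d}_{q,\e}$ and $u^{(j)}\to u$ in $H^{s,\d}(\O)$. Then $u\in Z^{s,\d}(\O)$, since $Z^{s,\d}(\O)$ is the kernel of the continuous map $\gamma_\d$. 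For each $j$, choose $u^{(j)}_{k_j}$ with $\|u^{(j)}_{k_j}-u^{(j)}\|<1/j$. The resulting sequence converges to $u$ and has the required $L^{\qbar}$ integrability on $\Gamma_{-\e}$. Equivalently, $\cA^{s,\d}_{q,\e}$ is the $H^{s,\d}$-closure of the subspace $\{v\in Z^{s,\d}(\O): v|_{\Gamma_{-\e}}\in L^{\qbar}(\Gamma_{-\e})\}$.

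For Green's identity, fix $q\in(\frac{2}{1+s},\min\{\frac1s,2\})$ and $u\in\cA^{s,\d}_{q,\e}$, with an approximating sequence $u_k$. Each $u_k$ vanishes on $\Gamma_\d$ and lies in $L^2(\O)\cap L^{\qbar}(\Gamma_{-\e})$, so $u_k|_{\Gamma_{\pm\e}}\in L^{\qbar}(\Gamma_{\pm\e})$. Take any $\bvphi\in C^\infty_c(\R^n;\R^n)$. It is smooth, so it lies in $H^{s,\d}(\O;\R^n)$ and in every $L^{\overline{q_2}}$. Theorem~\ref{thm: by parts} with $q_1=q$ then gives \eqref{eqt: greens} for $u_k$. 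By Lemma~\ref{lem: norm of normal} with $p_1=2$, $\vcN u_k\in L^q(\Gamma_\d;\R^n)$, and $q>\frac{2}{1+s}$. So Proposition~\ref{prop: characterize N} applies and yields \eqref{eqt: greensDist} for each $u_k$:
\[
\int_\O\left(\grad^s_\d u_k\cdot\bvphi+u_k\diver^s_\d\bvphi\right)d\bx=\int_{\Gamma_\d}\bvphi\cdot\NN u_k\,d\bx.
\]

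Next I would pass to the limit $k\to\infty$. On the left, $\grad^s_\d u_k\to\grad^s_\d u$ and $u_k\to u$ in $L^2(\O)$, while $\bvphi$ and $\diver^s_\d\bvphi$ are in $L^2(\O)$ because $\bvphi$ is smooth. On the right, Lemma~\ref{lem: NN is bdd} says $\NN:Z^{s,\d}(\O)\to L^2(\Gamma_\d;\R^n)$ is bounded, so $\NN u_k\to\NN u$ in $L^2(\Gamma_\d)$, and $\bvphi\in L^2(\Gamma_\d)$. Hence \eqref{eqt: greensDist} holds for $u$.

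I expect the main obstacle to be checking the hypotheses of Theorem~\ref{thm: by parts} and Lemma~\ref{lem: norm of normal} for each $u_k$. The relevant exponent $\qbar$ is the one from Definition~\ref{def: qbar} tied to this $q$ and $\e$, so I would make sure it matches the $\overline{q_1}$ used in Theorem~\ref{thm: by parts}; if the $\e$ there differs, I would shrink $\e$. I would also check that the integrability on $\Gamma_{\pm\e}$ follows from $u_k=0$ on $\Gamma_\e\subset\Gamma_\d$. The restriction $q<2$ is harmless, because the $L^q$ bound on $\vcN u_k$ is only needed as an intermediate step, and Proposition~\ref{prop: characterize N} then upgrades it to equality with $\NN u_k$ in $L^2$.
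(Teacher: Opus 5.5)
Your proposal is correct and follows essentially the same route as the paper: you verify the hypotheses of Theorem~\ref{thm: by parts} and Lemma~\ref{lem: norm of normal} for each approximant $u_k$, invoke Proposition~\ref{prop: characterize N} to identify $\vcN u_k=\NN u_k$, and pass to the limit using the boundedness of $\NN$ from Lemma~\ref{lem: NN is bdd}. Your diagonal argument for closedness supplies the details the paper leaves to the reader, and your worry about shrinking $\e$ is unnecessary, since Theorem~\ref{thm: by parts} only asks for some $\e>0$ and the same $\e$ works.
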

\begin{proof}
Showing that $\cA^{s,\d}_{q,\e}$ is a closed subspace of $Z^{s,\d}(\O)$ is straight-forward from the definitions, and we leave the details to the reader.  Next, given $u\in\cA^{s,\d}_{q,\e}$ as in the statement of the proposition, by Definition~\ref{def: cA} we may find an approximating sequence $\set{u_k}_{k=1}^\infty$ such that each $u_k$ satisfies the integrability hypotheses of Theorem~\ref{thm: by parts}. As each $u_k$ is zero on $\Gamma_\d$, $u_k$ satisfies \eqref{eqt: greens} for every $k$. Additionally, $u_k\in Z^{s,\d}(\O)$ and Lemma~\ref{lem: norm of normal} ensures that $\norm{\vcN u}_{L^{q}(\Gamma_\d;\R^n)}<\infty$. 

Thus, the hypotheses of Proposition~\ref{prop: characterize N} are satisfied and $\vcN u_k = \NN u_k$ for each $k\in\mathbb{N}$. As $\NN$ is a bounded linear operator, for any $\bvphi\in C^\infty_c(\R^n;\R^n)$, we may write
\begin{align*}
    &\abs{\int_\O\left[\grad^s_\d u\cdot \bvphi+u\diver^s_\d\bvphi\right]d\bx
        -\int_{\Gamma_\d} \bvphi\cdot\NN u\, d\bx}\\
    &\qqquad\le 
    \abs{\int_\O\left[\grad^s_\d (u-u_k)\cdot\bvphi
            +(u-u_k)\diver^s_\d\bvphi\right]d\bx
        -\int_{\Gamma_\d}\bvphi\cdot\NN(u-u_k)\,d\bx}\\
    &\qqqquad\qqquad+ 
    \abs{\int_\O\left(\grad^s_\d u_k\cdot\bvphi+u_k\diver^s_\d\bvphi\right)d\bx
        -\int_{\Gamma_\d} \bvphi\cdot\NN u_k\, d\bx}\\
    &\qqquad\le 
    \norm{u-u_k}_{H^{s,\d}(\O)}\norm{\bvphi}_{L^2(\O;\R^n)}
        +\norm{u-u_k}_{L^2(\O)}\norm{\diver^s_\d\bvphi}_{L^2(\O)}\\
    &\qqqquad\qqquad+
    \norm{\bvphi}_{L^2(\Gamma_\d;\R^n)}\norm{\NN}\norm{u-u_k}_{H^{s,\d}(\O)}.
\end{align*}
Since $u_k\to u$ in $H^{s,\d}(\O)$, the claim follows.
\end{proof}

Our next tools involve the quantity $u/d_\O^s$, which appears in the fractional Hardy inequality on bounded domains and will play an important role in our characterization of $\mathring H^{s,\d}(\O)$ in Theorem~\ref{thm: char Hsd0}.

\begin{proposition}\label{prop: bound normal in terms of hardy}
    Assume \ref{H1}. For any $p\in(1,2)$, there is some constant $C>0$ depending only on $p,n,\d,\O$ and $w_\d$ such that for all $u\in L^1(\O_\d)$ with $u/d_\O^s\in L^2(\O_\d)$, $\norm{\vcN u}_{L^p(\Gamma_\d;\R^N)}\le C\norm{u/d_\O^s}_{L^2(\O_\d)}$.
\end{proposition}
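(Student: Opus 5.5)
We need to bound $\vcN u(\bx)=\int_\O u(\by)\balpha^s_\d(\bx,\by)\,d\by$ for $\bx\in\Gamma_\d$. The plan is to write $u(\by)=d_\O(\by)^s\,\big(u(\by)/d_\O(\by)^s\big)$ and split the integral with Cauchy--Schwarz. This separates a factor involving only the weight $u/d_\O^s$ from a purely geometric kernel integral. Concretely, since $\abs{\balpha^s_\d(\bx,\by)}\le a_0\abs{\bx-\by}^{-n-s}\chi_{\abs{\bx-\by}<\d}$, we get
\[
\abs{\vcN u(\bx)}\le a_0\int_{\O\cap B_\d(\bx)}\frac{\abs{u(\by)}}{d_\O(\by)^s}\,\frac{d_\O(\by)^s}{\abs{\bx-\by}^{n+s}}\,d\by .
\]
Plain Cauchy--Schwarz in $\by$ will not work, because $\int d_\O(\by)^{2s}\abs{\bx-\by}^{-2n-2s}d\by$ diverges for $n\ge1$. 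Instead I would use a Schur-type splitting. For $\bx\in\Gamma_\d$ and $\by\in\O$, the segment from $\bx$ to $\by$ crosses $\partial\O$, so $\abs{\bx-\by}\ge d_\O(\bx)+d_\O(\by)$; in particular $\abs{\bx-\by}\ge d_\O(\bx)$ and $d_\O(\by)\le\abs{\bx-\by}$.

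Write the kernel as $K(\bx,\by)=d_\O(\by)^s\abs{\bx-\by}^{-n-s}$ and apply weighted Cauchy--Schwarz:
\[
\abs{\vcN u(\bx)}\le a_0\Big(\int_{\O\cap B_\d(\bx)}K\,d\by\Big)^{1/2}\Big(\int_{\O}\frac{\abs{u(\by)}^2}{d_\O(\by)^{2s}}K(\bx,\by)\,d\by\Big)^{1/2}.
\]
For the first factor, use $d_\O(\by)\le\abs{\bx-\by}$. Then $K\le\abs{\bx-\by}^{-n}$ on $\abs{\bx-\by}\ge d_\O(\bx)$, so the integral is at most $\s_{n-1}\log(\d/d_\O(\bx))$. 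For the second factor, raise to the power $p/2$ and integrate over $\bx\in\Gamma_\d$. Hölder with exponents $2/p$ and $2/(2-p)$ on the product then gives
\[
\norm{\vcN u}_{L^p}^p\le C\Big(\int_{\Gamma_\d}\log(\d/d_\O)^{\frac{p}{2-p}}d\bx\Big)^{\frac{2-p}{2}}\Big(\int_{\Gamma_\d}\int_\O \frac{\abs{u}^2}{d_\O^{2s}}K\,d\by\,d\bx\Big)^{p/2}.
\]
The log factor is integrable on $\Gamma_\d$ for any finite power. This is the exterior analogue of Proposition~\ref{prop: distIntegrable}, and it uses the $C^{1,1}$ (or even Lipschitz) boundary via a tubular-neighbourhood or coarea estimate $\abs{\{\bx\in\Gamma_\d: d_\O(\bx)<t\}}\le Ct$. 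This is where $p<2$ enters.

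The main obstacle is the last double integral. After Tonelli it is $\int_\O \frac{\abs{u(\by)}^2}{d_\O(\by)^{2s}}\,\Big(d_\O(\by)^s\int_{\Gamma_\d}\abs{\bx-\by}^{-n-s}d\bx\Big)d\by$. Since $\Gamma_\d\subseteq\{\bx:\abs{\bx-\by}\ge d_\O(\by)\}$, the inner integral is at most $\s_{n-1}d_\O(\by)^{-s}/s$, so the bracket is bounded by $\s_{n-1}/s$. The result is $C\norm{u/d_\O^s}_{L^2(\O)}^2$, which gives the claim with $C$ depending on $p,n,\d,\O,a_0,s$. If the $\log$-integrability step gives trouble, I would replace the split by $K=K^{\theta}K^{1-\theta}$ with a different power of $d_\O(\by)$ and redo the bookkeeping. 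Still, I expect the log form above to suffice, with the care going into checking that the exterior-collar layer estimate holds.
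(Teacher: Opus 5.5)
Your argument is correct, but it takes a genuinely different route from the paper's.

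\textbf{The paper's route.} The paper writes $\vcN u(\bx)$ as an integral in the translation variable $\bh=\bx-\by$ and applies Minkowski's integral inequality. It then uses $\abs{\bh}\ge d_\O(\bx-\bh)$ to trade $\abs{\bh}^{-s-\e}$ for $d_\O(\bx-\bh)^{-s-\e}$. What remains is the finite integral $\int_{B_\d(\mathbf 0)}\abs{\bh}^{-n+\e}\,d\bh$ times $\norm{u/d_\O^{s+\e}}_{L^p(\Gamma_{-\d})}$. With $\e=\frac{2-p}{4p}$, H\"older converts the latter into $\norm{u/d_\O^s}_{L^2}$, using only the interior integrability $d_\O^{-1/2}\in L^1(\O)$ from Proposition~\ref{prop: distIntegrable}.

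\textbf{Your route.} You instead use a Schur-type weighted Cauchy--Schwarz in $\by$. This places the loss on the exterior side, as the factor $\log(\d/d_\O(\bx))$, and Tonelli then recovers the weight $d_\O^{-2s}$ with no $\e$-loss. In both proofs, $p<2$ is what lets H\"older absorb a mild boundary singularity, just on opposite sides of $\partial\O$.

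\textbf{Trade-offs.} The paper's route needs nothing about the outer collar. It also gives a constant independent of $s$, matching the dependence list in the statement. Yours needs the exterior layer bound $\abs{\set{\bx\in\Gamma_\d: d_\O(\bx)<t}}\le Ct$, which is true for Lipschitz boundaries but is not proved in the paper. It also carries a factor $s^{-1/2}$, which is harmless since $s$ is fixed. In exchange, your pointwise estimate yields the sharper weighted bound $\int_{\Gamma_\d}\abs{\vcN u}^2/\log(\d/d_\O)\,d\bx\le C\norm{u/d_\O^s}_{L^2(\O)}^2$. This gives every $p<2$ at once and shows that the failure at $p=2$ is at most logarithmic.

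\textbf{A small correction to your motivation.} The unweighted Cauchy--Schwarz integral does not diverge for fixed $\bx\in\Gamma_\d$. It is of order $d_\O(\bx)^{-n}$, giving $\abs{\vcN u(\bx)}\lesssim d_\O(\bx)^{-n/2}$. That non-integrable growth near $\partial\O$ is the real obstruction, and for $n=1$ this cruder bound would in fact suffice for all $p<2$.
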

\begin{proof}
    Let $u$ and $p$ be given as in the statement of the proposition. We can rewrite the definition of $\vcN u$, \eqref{eq: nonlocal normal vector}, using the fact that $u$ is zero on $\Gamma_\d$ and $\supp w_\d\subseteq B_\d(\mathbf 0)$. Combining this with Minkowski's integral inequality gives
    \begin{align*}
        \norm{\vcN u}_{L^p(\Gamma_\d;\R^n)} &= \left(\int_{\Gamma_\d}\abs{\int_{B_\d(\mathbf0)} \frac{u(\bx-\bh)}{\abs{\bh}^{n+s}} \frac{\bh}{\abs{\bh}} w_\d(\bh)\chi_{\O}(\bx-\bh)\,d\bh}^p\,d\bx\right)^{1/p}\\
        &\le a_0\int_{B_\d(\mathbf0)}\left(\int_{\Gamma_\d} \frac{\abs{u(\bx-\bh)}^p}{\abs{\bh}^{np+sp}}\chi_{\O}(\bx-\bh)\,d\bx\right)^{1/p}\,d\bh\\
        &\le a_0\int_{B_\d(\mathbf0)}\abs{\bh}^{-n+\e}\left(\int_{\Gamma_\d} \frac{\abs{u(\bx-\bh)}^p}{\abs{\bh}^{sp+\e p}}\chi_{\O}(\bx-\bh)\,d\bx\right)^{1/p}\,d\bh,
    \end{align*}
    for any $\e>0$.
    
    The only points which contribute to the integral occur when $\bx\in\Gamma_\d$ and $\bx-\bh\in\O$, which means that for these points, $\abs{\bh}\ge d_\O(\bx-\bh)$. Hence,
    \begin{align*}
        \norm{\vcN u}_{L^p(\Gamma_\d;\R^n)} &\le a_0\int_{B_\d(\mathbf0)}\abs{\bh}^{-n+\e}\left(\int_{\Gamma_\d} \frac{\abs{u(\bx-\bh)}^p}{d_\O(\bx-\bh)^{sp+\e p}}\chi_{\O}(\bx-\bh)\,d\bx\right)^{1/p}\,d\bh\\
        &\le a_0\int_{B_\d(\mathbf0)}\abs{\bh}^{-n+\e}\left(\int_{\Gamma_{-\d}} \frac{\abs{u(\by)}^p}{d_\O(\by)^{sp+\e p}}\,d\by\right)^{1/p}\,d\bh\\
        &= \frac{a_0 \d^\e\s_{n-1}}{\e}\norm{u/d_\O^{s+\e}}_{L^p(\Gamma_{-\d})},
    \end{align*}
    For any $\e>0$. Therefore, it suffices to show that $u/d_\O^{s+\e}\in L^p(\Gamma_{-\d})$ for some $\e>0$. Since $p<2$, we can set $\e\coloneqq \frac{2-p}{4p}$. Let $q=2/p$ and let $q'$ denote its H\"older conjugate. Then $\e pq' = 1/2$ and
    \begin{align*}
        \norm{\frac{u}{d_\O^{s+\e}}}_{L^p(\Gamma_{-\d})}^p &= \int_{\Gamma_{-\d}}\frac{\abs{u}^p}{d_\O^{sp}}\frac{1}{d_\O^{\e p}}\, d\by\\
        &\le\left(\int_{\Gamma_{-\d}} \frac{\abs{u}^{pq}}{d_\O^{spq}}\, d\by\right)^{1/q}\left(\int_{\Gamma_{-\d}}d_\O^{-\e pq'}\, d\by\right)^{1/q'}\\
        &= \norm{u/d_\O^s}_{L^2(\Gamma_{-\d})}^p\left(\int_{\Gamma_{-\d}} d_\O^{-1/2}\,d\by\right)^{(2-p)/2}.
    \end{align*}
    Since $\O$ is $C^{1,1}$, Proposition~\ref{prop: distIntegrable} ensures that $d_\O^{-1/2}$ is integrable and thus the proof is complete.
\end{proof}

\begin{proposition}\label{prop: hardy implies greens}
    Assume \ref{H1}. Suppose that $u\in Z^{s,\d}(\O)$ and $u/d_\O^s\in L^2(\O_\d)$. Then \eqref{eqt: greens} holds.
    
\end{proposition}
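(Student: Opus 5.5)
The plan is to approximate $\bvphi$ rather than $u$. I would split $\bvphi\in C^\infty_c(\R^n;\R^n)$ into a piece compactly supported in $\O$ and a piece concentrated near $\partial\O$. The weak definition \eqref{def: weak Dsd} handles the first piece. For the second, I would compute $\diver^s_\d$ directly and let the width of the boundary layer shrink.

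\textbf{Step 1 (splitting).} Fix cutoffs $\psi_k\in C^\infty_c(\O)$ with $0\le\psi_k\le1$, $\psi_k=1$ on $\O_{-2/k}$, $\supp\psi_k\subseteq\O_{-1/k}$ and $\abs{\nabla\psi_k}\le Ck$; such cutoffs exist because $\partial\O$ is $C^{1,1}$. Set $\bg_k\coloneqq(1-\psi_k)\bvphi$.

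Since $u=0$ on $\Gamma_\d$, \eqref{def: weak Dsd} applied to $\psi_k\bvphi$ gives
\[
\int_\O\grad^s_\d u\cdot\psi_k\bvphi\,d\bx=-\int_\O u\diver^s_\d(\psi_k\bvphi)\,d\bx .
\]
Also $\int_\O\grad^s_\d u\cdot\bg_k\,d\bx\to0$ by dominated convergence. So it suffices to show
\[
\int_\O u\diver^s_\d\bg_k\,d\bx\to\int_{\Gamma_\d}\bvphi\cdot\vcN u\,d\bx .
\]

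\textbf{Step 2 (extracting the collar term).} For $\bx\in\O$, split the integral defining $\diver^s_\d\bg_k(\bx)$ according to whether $\by\in\O$ or $\by\in\Gamma_\d$. The piece containing $-\bg_k(\by)$ with $\by\in\Gamma_\d$, where $\bg_k=\bvphi$, yields
\[
-\int_\O u(\bx)\int_{\Gamma_\d}\bvphi(\by)\cdot\balpha^s_\d(\bx,\by)\,d\by\,d\bx .
\]
Fubini is justified here: $\int_{\O^c}\abs{\bx-\by}^{-n-s}\,d\by\le Cd_\O(\bx)^{-s}$, and $\abs{u}d_\O^{-s}\in L^1(\O)$ because $u/d_\O^s\in L^2$ and $\O$ is bounded. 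After swapping the integrals and using the antisymmetry of $\balpha^s_\d$, this piece equals exactly $\int_{\Gamma_\d}\bvphi\cdot\vcN u\,d\by$.

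\textbf{Step 3 (the remainder vanishes).} Define
\[
R_k(\bx)\coloneqq\int_{\O}(\bg_k(\bx)-\bg_k(\by))\cdot\balpha^s_\d(\bx,\by)\,d\by+\bg_k(\bx)\cdot\int_{\Gamma_\d}\balpha^s_\d(\bx,\by)\,d\by .
\]
It remains to show $\int_\O uR_k\,d\bx\to0$. I would apply dominated convergence with dominating function $C\abs{u}d_\O^{-s}$, which lies in $L^1(\O)$. This requires the uniform bound $\abs{R_k(\bx)}\le Cd_\O(\bx)^{-s}$ with $C$ independent of $k$, which I treat in two cases.
\begin{itemize}
\item[(a)] If $d_\O(\bx)\le2/k$, the estimate $\abs{\bg_k(\bx)-\bg_k(\by)}\le C\min\set{k\abs{\bx-\by},1}$ bounds the first integral by $Ck^s\le Cd_\O(\bx)^{-s}$. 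The second term is bounded by $Cd_\O(\bx)^{-s}$.
\item[(b)] If $d_\O(\bx)>2/k$, then $\bg_k(\bx)=0$. Only points $\by$ in the layer $\set{\by\in\O: d_\O(\by)<2/k}$ contribute, and such points satisfy $\abs{\bx-\by}\ge d_\O(\bx)-2/k$. When $d_\O(\bx)\ge3/k$ this gives $\abs{\bx-\by}\ge d_\O(\bx)/3$, hence the bound $Cd_\O(\bx)^{-s}$. The remaining range $2/k<d_\O(\bx)<3/k$ is handled like case (a).
\end{itemize}
For pointwise convergence, fix $\bx\in\O$. For large $k$, $\bg_k(\bx)=0$, and the integrand is supported in a layer whose measure tends to zero while staying at distance at least $d_\O(\bx)/3$ from $\bx$, so $R_k(\bx)\to0$.

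I expect the main obstacle to be the uniform domination in Step 3, particularly case (b) near $d_\O(\bx)\approx2/k$. If the cutoff geometry makes this awkward, I would choose $\psi_k$ to depend only on $d_\O$, using the $C^{1,1}$ regularity so that $d_\O$ is smooth near $\partial\O$. The Lipschitz bound in $\by$ then holds uniformly, and the estimates $\abs{R_k}\lesssim\min\set{k^s,d_\O^{-s}}$ follow cleanly.
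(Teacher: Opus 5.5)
Your proof is correct, and it takes a genuinely different route from the paper.

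The paper approximates $u$ rather than $\bvphi$. It sets $u_k=\psi_k u$ and uses Lemma~\ref{lem: approx w/cpt supp}, which rests on the nonlocal product rule and an estimate of the commutator term, to get $k^s\norm{u-u_k}_{L^2(\O)}\to0$ and $\grad^s_\d u_k\to\grad^s_\d u$ in $L^p$. It then applies Theorem~\ref{thm: by parts} to each $u_k$, which vanishes near $\partial\O$. Finally it passes to the limit in the collar term via $\norm{\vcN v}_{L^p(\Gamma_\d)}\le C\norm{v/d_\O^s}_{L^2}$ from Proposition~\ref{prop: bound normal in terms of hardy}.

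You instead cut off the test field. The interior piece $\psi_k\bvphi$ is handled by the weak definition \eqref{def: weak Dsd} alone. The collar term then appears exactly, for every $k$, through Fubini and antisymmetry. Your main estimate is the tail bound $\int_{\O^c}\abs{\balpha^s_\d(\bx,\by)}\,d\by\le Cd_\O(\bx)^{-s}$.

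Your domination $\abs{R_k}\le Cd_\O^{-s}$ holds in all ranges. For $2/k<d_\O(\bx)<3/k$, the vanishing of $\bg_k(\bx)$ lets you reuse the $Ck^s$ bound. The pointwise convergence argument is also correct. So your closing worry, and the switch to cutoffs depending on $d_\O$, are unnecessary. Two small points: the cutoffs with $\abs{\nabla\psi_k}\le Ck$ exist in any open set by mollification, with no $C^{1,1}$ needed. And the support of $\bg_k$ in $\O$ lies in $\{d_\O\le 2/k\}$ rather than $\{d_\O<2/k\}$, which changes nothing.

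What your route buys is economy and generality.
\begin{itemize}
\item It bypasses the product rule, Lemma~\ref{lem: approx w/cpt supp}, Theorem~\ref{thm: by parts} and Proposition~\ref{prop: distIntegrable}.
\item It only uses $u\,d_\O^{-s}\in L^1(\O)$ rather than $L^2$.
\item It never uses the $C^{1,1}$ regularity of $\partial\O$.
\item As a byproduct, $\vcN u(\by)$ converges absolutely for a.e.\ $\by\in\Gamma_\d$ and $\vcN u\in L^1(\Gamma_\d;\R^n)$. So the right-hand side of \eqref{eqt: greens} is meaningful without appeal to other results.
\end{itemize}

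The paper's route additionally produces approximants supported away from $\partial\O$ that converge in the Hardy-weighted sense, but nothing downstream needs this. The $L^q$ control of $\vcN u$ used in Corollary~\ref{coro: extend} comes independently from Proposition~\ref{prop: bound normal in terms of hardy}. Your argument could therefore replace the paper's proof as it stands.
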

\begin{proof}
Let $u$ be given as in the statement, and fix $p\in(1,2)$ and $\bvphi\in C^\infty_c(\R^n;\R^n)$. From Lemma~\ref{lem: approx w/cpt supp} there is a sequence $\set{u_k}_{k=1}^\infty\subseteq Z^{s,\d}(\O)$ such that $\supp(u_k)\subseteq \O_{-1/k}$ for each $k$ and
\begin{align*}
    \lim_{k\to\infty}\left(k^s\norm{u-u_k}_{L^2(\O)}\right)=0= \lim_{k\to\infty}\norm{\grad^s_\d(u-u_k)}_{L^p(\O;\R^n)}. 
\end{align*}
For convenience, let us formally write
\[
    F(v,\bvphi)
    :=\int_\O\left(\grad^s_\d v\cdot\bvphi + v\diver^s_\d\bvphi\right)d\bx
\]
Then for any $k\in\N$,
\begin{equation}\label{eqt: hardy ibp estimate}
    \abs{F(u,\bvphi)-\int_{\Gamma_\d}\bvphi\cdot\vcN u\,d\bx}
    \le 
    \abs{F(u-u_k,\bvphi)}
    +\abs{F(u_k,\bvphi)-\int_{\Gamma_\d}\bvphi\cdot\vcN u\,d\bx}.
\end{equation}
Since $u_k\to u$ in $L^2(\O)$ and $\grad^s_\d (u-u_k)\to 0$ in $L^p(\O;\R^n)$, we may use H\"older's inequality to conclude that $\lim_{k\to\infty}|F(u-u_k,\bvphi)|=0$. For the second, we note that each $u_k$ is zero on a neighborhood of $\partial\O$. Thus, with $q_1\in(1,1/s)$ and $\e=1/k$ fixed, we may apply Theorem~\ref{thm: by parts}, as $(u_k)|_{\Gamma_{\pm1/k}}\in L^\infty(\Gamma_{\pm1/k})\subseteq L^{\overline{q_1}}(\Gamma_{\pm 1/k})$. Moreover, $\bvphi$ has the required integrability so that $u_k$ satisfies \eqref{eqt: greens}:
\[
    F(u_k,\bphi)
    =\int_{\Gamma_\d}\bvphi\cdot\vcN u_k\,d\bx.
\]
Hence, it suffices to show that $\norm{\vcN(u-u_k)}_{L^p(\Gamma_\d;\R^n)}\to0$ as $k\to\infty$. Applying Proposition~\ref{prop: bound normal in terms of hardy}, we have
\begin{align*}
    \norm{\vcN (u-u_k)}_{L^p(\Gamma_\d)}\le C\norm{\frac{u-u_k}{d_\O^s}}_{L^2(\O)}.
\end{align*}
To see that this tends to zero, recall that $u_k$ is zero on $\Gamma_{-1/k}$ and so
\begin{align*}
    \int_{\O} \frac{\abs{u-u_k}^2}{d_\O^{2s}}\,d\bx &= \int_{\O_{-1/k}} \frac{\abs{u-u_k}^2}{d_\O^{2s}}\,d\bx + \int_{\Gamma_{-1/k}}\frac{\abs{u}^2}{d_\O^{2s}}\,d\bx\\
    &\le k^{2s}\int_{\O_{-1/k}}\abs{u-u_k}^2\,d\bx + \int_{\Gamma_{-1/k}}\frac{\abs{u}^2}{d_\O^{2s}}\,d\bx.
\end{align*}
The first term goes to zero, due to the fact that the sequence $\set{u_k}_{k=1}^\infty$ was selected to ensure that $k^s\norm{u-u_k}_{L^2(\O)}$ converges to zero. The second term also goes to zero since $u/d_\O^s\in L^2(\O)$ and measure of the domain of integration shrinks to zero.
Thus, the left-hand side of \eqref{eqt: hardy ibp estimate} can be made arbitrarily small, implying that
\begin{align*}
    F(u,\bvphi)
    =\int_\O\left(\grad^s_\d u\cdot\bvphi+u\diver^s_\d\bvphi\right)d\bx
    =\int_{\Gamma_\d}\bvphi\cdot\vcN u\,d\bx,
\end{align*}
as desired.
\end{proof}

We summarize the last few results.
\begin{corollary}\label{coro: extend}
    If \ref{H1} holds, $u\in Z^{s,\d}(\O)$, and either
    \begin{itemize}
        \item[(i)] there is some $\e\in(0,\d)$ and $q\in\left(\frac{2}{1+s},\min\set{\frac1s,2}\right)$ such that $u\in \cA^{s,\d}_{q,\e}$, or
        \item[(ii)] $u/d_\O^s\in L^2(\O_\d)$,
    \end{itemize}
    then $u$ satisfies \eqref{eqt: greensDist} and $E_0u\in H^{s,\d}(\R^n)$ with $\norm{E_0u}_{H^{s,\d}(\R^n)}\le C\norm{u}_{H^{s,\d}(\O)}$ for a constant $C$ depending only on the norm of the lift $\bell^s_\d: L^2(\Gamma_\d;\R^n)\to H^{s,\d}(\O;\R^n)$.
\end{corollary}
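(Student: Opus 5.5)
The corollary only assembles results already proved; the plan is to feed each hypothesis into Proposition~\ref{prop: grad of ext}. The one thing to establish in each case is that $u$ satisfies the distributional Green's identity \eqref{eqt: greensDist}. Once that holds, Proposition~\ref{prop: grad of ext} shows that $E_0u\in H^{s,\d}(\R^n)$, with $\grad^s_\d(E_0u)$ given by \eqref{eqt: grad of ext}. It also gives
\[
\norm{E_0u}_{H^{s,\d}(\R^n)}\le \sqrt{1+\norm{\NN}^2}\,\norm{u}_{H^{s,\d}(\O)}.
\]
Lemma~\ref{lem: NN is bdd} gives $\norm{\NN}\le\norm{\bell^s_\d}$, so $C=\sqrt{1+\norm{\bell^s_\d}^2}$ depends only on the lift. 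Since the $L^2(\O;\R^n)$ norm of $\bell^s_\d\bbf$ is bounded by its $H^{s,\d}(\O;\R^n)$ norm, we may equally use the operator norm of $\bell^s_\d$ into $H^{s,\d}(\O;\R^n)$, as the statement does.

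\emph{Case (i).} Here $u\in\cA^{s,\d}_{q,\e}$ with $q\in(\frac{2}{1+s},\min\{\frac1s,2\})$. This is exactly the range covered by the second assertion of Proposition~\ref{prop: IBP in Asd}, so \eqref{eqt: greensDist} holds directly.

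\emph{Case (ii).} Here $u/d_\O^s\in L^2(\O_\d)$. Proposition~\ref{prop: hardy implies greens} gives the pointwise Green's identity \eqref{eqt: greens}. To pass to \eqref{eqt: greensDist} via Proposition~\ref{prop: characterize N}, we need $\vcN u\in L^{q}(\Gamma_\d;\R^n)$ for some $q\in(\frac{2}{1+s},2]$. Choose any $p\in(\frac{2}{1+s},2)$; this interval is nonempty because $s>0$. Since $u\in L^2(\O_\d)\subseteq L^1(\O_\d)$ on a bounded set, Proposition~\ref{prop: bound normal in terms of hardy} gives $\norm{\vcN u}_{L^p(\Gamma_\d;\R^n)}\le C\norm{u/d_\O^s}_{L^2(\O_\d)}<\infty$. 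Proposition~\ref{prop: characterize N} then yields $\NN u=\vcN u$. Substituting this into \eqref{eqt: greens} gives \eqref{eqt: greensDist}.

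The main point to watch is case (ii). We must check that the exponent range of Proposition~\ref{prop: bound normal in terms of hardy}, namely $p<2$, overlaps with the range $q>\frac{2}{1+s}$ required by Proposition~\ref{prop: characterize N}; it does. We also need the test-function classes in \eqref{eqt: greens} and \eqref{eqt: greensDist} to agree, and both are $C^\infty_c(\R^n;\R^n)$. Everything else is direct citation.
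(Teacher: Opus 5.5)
Your proposal is correct and follows essentially the same route as the paper. Case (i) goes through Proposition~\ref{prop: IBP in Asd}, and case (ii) combines Propositions~\ref{prop: hardy implies greens}, \ref{prop: bound normal in terms of hardy}, and \ref{prop: characterize N}; both then conclude via Proposition~\ref{prop: grad of ext}, with Lemma~\ref{lem: NN is bdd} controlling the constant. Your explicit choice of $p\in(\frac{2}{1+s},2)$, so that the exponent ranges overlap, is a detail the paper leaves implicit, and it is exactly what that step needs.
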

\begin{proof}
    In the first case, the assumption that $u\in\cA^{s,\d}_{q,\e}$ ensures \eqref{eqt: greensDist} holds from Proposition~\ref{prop: IBP in Asd}. Proposition~\ref{prop: grad of ext} then gives $E_0u\in H^{s,\d}(\R^n)$.

    For the second case, Propositions~\ref{prop: hardy implies greens} and \ref{prop: bound normal in terms of hardy} ensure that the hypotheses of Proposition~\ref{prop: characterize N} are satisfied. The desired conclusion then immediately follows, with the help of Proposition~\ref{prop: grad of ext}.
\end{proof}

\section{\texorpdfstring{
The Closure of $C^\infty_c(\Omega)$ in $H^{s,\delta}(\Omega)$
}{
The Closure of Compactly Supported Smooth Functions
}}
Recall that Definition~\ref{def: mathringH} introduced $\mathring H^{s,\d}(\O)$. The main result of this section (Theorem~\ref{thm: char Hsd0}) shows that $\mathring H^{s,\d}(\O)$ satisfies many, but not all, of the truncated-fractional analogs of the defining characteristics of $H^1_0(\O)$ (Theorem~\ref{thm: char H10}). We present several important remarks following its proof, but first we clarify an important difference between the current truncated-fractional and more traditional fractional Sobolev spaces.

\subsection{Relationship to Other Fractional Sobolev Spaces}
To clarify the relationship between $\mathring H^{s,\d}(\O)$, $\widetilde H^{s,\d}(\O)$, and some alternative notions of fractional Sobolev space, we need some additional notation. The spaces described below have been well-studied and will not play a terribly important role here. Thus, we do not provide much introduction or a complete definition and instead refer to \cite{triebel_interpolation_1978,mclean_strongly_2000,grisvard_elliptic_1985} for additional details.

\begin{definition}\label{def: compVal}
    For $\O\subseteq\R^n$ an open, bounded, connected, Lipschitz domain, we introduce the following complementary-value spaces:
    \begin{enumerate}
        \item $\widetilde H^{s}(\O) \coloneqq \set{u\in H^{s}(\R^n): \supp u\subseteq \overline\O}$, where $H^s(\R^n)$ is the Bessel potential space defined in terms of the Fourier transform. This is a Banach space with the norm $\norm{u}_{\widetilde H^{s}(\O)}=\norm{ u}_{H^{s}(\R^n)}$.
        \item $\widetilde B^s_{2,2}(\O) \coloneqq \set{u\in B^s_{2,2}(\R^n): \supp u\subseteq \overline\O}$, where $B^s_{2,2}(\R^n)$ is the Besov space, which may defined explicitly or as an interpolation space. This is a Banach space with the norm $\norm{u}_{\widetilde B^s_{2,2}(\O)}=\norm{ u}_{B^s_{2,2}(\R^n)}$.
        \item $\widetilde W^{s,2}(\O) \coloneqq \set{u\in W^{s,2}(\R^n): \supp u\subseteq \overline\O}$, where $W^{s,2}(\R^n)$ is the Sobolev-Slobodeckij space defined in terms of the Gagliardo seminorm. This is a Banach space with the norm $\norm{u}_{\widetilde W^{s,2}(\O)}=\norm{ u}_{W^{s,2}(\R^n)}$.
    \end{enumerate}
\end{definition}

\begin{remark}\label{rem: fracSoboSp}
    In general, there are many competing notions of what a ``Fractional Sobolev Space" should be. For example, some approaches are motivated by interpolation, which requires a choice between the real and complex methods. We assume $\O$ is at least Lipschitz and, in this paper, we consider only the case where $p=2$. Thus, the real and complex methods of interpolation coincide (see, for example, \cite{chandler-wilde_interpolation_2015}). Furthermore, in this setting the three spaces just introduced are identical as sets with equivalent norms. Thus, in our case the various competing fractional Sobolev spaces are all the same.
    
    Each of the complementary-value spaces above is also equal to the appropriate norm-closure of $C^\infty_c(\O)$. See, for example, Theorem 3.29 of \cite{mclean_strongly_2000}, Theorem 4.3.2/1 of \cite{triebel_interpolation_1978} and Theorem 1.4.2.2 of \cite{grisvard_elliptic_1985}. Density of test functions in complementary-value spaces has been explored elsewhere; see for example Theorem 3.9 of \cite{bellido_nonlocal_2025} or the paper \cite{fiscella_density_2015}.

    Similar to Definition~\ref{def: widetildeH}, Definition~\ref{def: compVal} uses the norms inherited by viewing the function as defined on the full space $\R^n$, rather than a norm intrinsic to $\O$ as in Definition~\ref{def: mathringH}. The latter choice would instead lead to the spaces often denoted $\mathring H^s(\O), \mathring B^s_{2,2}(\O)$, or $\mathring W^{s,2}(\O)$ \cite{triebel_interpolation_1978,mclean_strongly_2000}.
\end{remark}

\begin{remark}\label{rem: equivNorms}
    Extending by zero on $\Gamma_\d$, we may consider $C^\infty_c(\O)$ as a subspace of $L^2(\O_\d)$. We consider two norms on this space:
    \begin{align*}
        \norm{\varphi}_{H^{s,\d}(\O)}&\coloneqq \left(\norm{\varphi}_{L^2(\O_\d)}^2+ \norm{\grad^s_\d \varphi}_{L^2(\O;\R^n)}^2\right)^{1/2},\\
        \norm{\varphi}_{\widetilde H^{s,\d}(\O)}&\coloneqq \left(\norm{E_0\vphi}_{L^2(\R^n)^2} + \norm{\grad^s_\d(E_0\vphi)}_{L^2(\R^n;\R^n)}\right)^{1/2}. 
    \end{align*}
    These correspond to viewing $C^\infty_c(\O)$ as a subspace of $H^{s,\d}(\O)$ or $H^{s,\d}(\R^n)$, respectively (compare Definitions~\ref{def: mathringH} and \ref{def: widetildeH}). 
    
    By analogy with with the spaces described in Definition~\ref{def: compVal}, one might not expect $\norm{\cdot}_{H^{s,\d}(\O)}$ and $\norm{\cdot}_{\widetilde H^{s,\d}(\O)}$ to be equivalent in general, even on extension domains. Indeed, for $s=1/2$ the analogous versions of $\norm{\cdot}_{H^{s,\d}(\O)}$ and $\norm{\cdot}_{\widetilde H^{s,\d}(\O)}$ are not equivalent norms on $C^\infty_c(\O)$ in any of the three cases (Bessel potential, Besov, or Sobolev-Slobodeckij spaces). In symbols:
    \begin{align*}
        s=\frac12\implies \widetilde H^{s}(\O)\subsetneqq \mathring H^s(\O), \widetilde B^{s}_{2,2}(\O)\subsetneqq \mathring B^{s}_{2,2}(\O), \text{ and } \widetilde W^{s,2}(\O)\subsetneqq \mathring W^{s,2}(\O);
    \end{align*}see, for example, Theorem 3.33 of \cite{mclean_strongly_2000}, Remark 2 of \cite[Section 4.3.2]{triebel_interpolation_1978}, or Section 1.4 of \cite{grisvard_elliptic_1985}.

    Perhaps surprisingly, the truncated-fractional space $\mathring H^{s,\d}(\O)$ behaves differently. Proposition~\ref{prop: RingToTildeHomeo} shows that even for $s=1/2$, $\widetilde H^{s,\d}(\O)$ and $\mathring H^{s,\d}(\O)$ are essentially the same space whenever $\O$ has $C^{1,1}$ boundary.  One may understand this intuitively as follows: for the Sobolev-Slobodeckij spaces, the analogue of the $H^{s}(\O)$-norm only ``sees'' $\O$, while the $\widetilde H^{s}(\O)$ norm ``sees'' the function as it crosses the boundary. Thus the norm $\norm{\cdot}_{\widetilde H^s(\O)}$ is sensitive to potential discontinuities or sharp corners introduced at the boundary when extending functions from $\O$ to all of $\R^n$. See, for example, \cite{dyda_function_2019}. Contrast this with the behavior of the present framework. The nonlocality of the operator $\grad^s_\d$ allows it to ``see'' outside of $\O$, for both the $H^{s,\d}(\O)$- and $\widetilde H^{s,\d}(\O)$-norm. Both norms already account for any potentially sharp transition caused by the extension, and hence the above non-equivalence at $s=1/2$ does not hold for these spaces.
\end{remark}

\begin{proposition}\label{prop: RingToTildeHomeo}
    If \ref{H1} is satisfied, then the zero-extension $E_0$ acts as a linear homeomorphism from $\mathring H^{s,\d}(\O)$ to $\widetilde H^{s,\d}(\O)$.
\end{proposition}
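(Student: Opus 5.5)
We prove that $E_0$ is bounded in both directions on the dense subspace $\mathcal D\coloneqq\set{\vphi\in C^\infty_c(\O_\d):\supp\vphi\subseteq\O}$. Because $\mathcal D$ is dense in $\mathring H^{s,\d}(\O)$ by Definition~\ref{def: mathringH}, and $E_0\mathcal D$ is dense in $\widetilde H^{s,\d}(\O)$ by Definition~\ref{def: widetildeH}, two-sided bounds on $\mathcal D$ give the homeomorphism after extending by continuity. One point needs care: the continuous extension of $E_0|_{\mathcal D}$ must agree with the actual zero extension. This holds because $L^2$ convergence on $\O_\d$ of functions vanishing on $\Gamma_\d$ is the same as $L^2(\R^n)$ convergence of their zero extensions.

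For the lower bound, take $\vphi\in\mathcal D$ and set $\tilde\vphi\coloneqq E_0\vphi\in C^\infty_c(\R^n)$. The restriction of $\grad^s_\d\tilde\vphi$ to $\O$, computed by the pointwise formula \eqref{eqt: Dsd}, equals $\grad^s_\d\vphi$ in the $H^{s,\d}(\O)$ sense. This uses only values of $\tilde\vphi$ on $\O_\d$, together with the remark after \eqref{def: weak Dsd} that weak and pointwise gradients coincide for test functions. Also $\norm{\vphi}_{L^2(\O_\d)}=\norm{\tilde\vphi}_{L^2(\R^n)}$. Hence
\[
\norm{\vphi}_{H^{s,\d}(\O)}\le\norm{E_0\vphi}_{H^{s,\d}(\R^n)},
\]
so the inverse map (restriction to $\O_\d$) is bounded.

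For the upper bound we use Corollary~\ref{coro: extend}. Each $\vphi\in\mathcal D$ lies in $Z^{s,\d}(\O)$. Because $\supp\vphi$ is a compact subset of $\O$, $\vphi$ vanishes near $\partial\O$, so $\vphi/d_\O^s\in L^2(\O_\d)$; alternatively, $\vphi|_{\Gamma_{-\e}}\in L^\infty$, so $\vphi\in\cA^{s,\d}_{q,\e}$ trivially. Corollary~\ref{coro: extend} then yields
\[
\norm{E_0\vphi}_{H^{s,\d}(\R^n)}\le\sqrt{1+\norm{\NN}^2}\,\norm{\vphi}_{H^{s,\d}(\O)},
\]
and the constant is uniform in $\vphi$ by Lemma~\ref{lem: NN is bdd}. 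This is the key step: a bound on $\grad^s_\d(E_0\vphi)$ on the outer collar $\Gamma_\d$, namely $-\vcN\vphi=-\NN\vphi$, in terms of $\norm{\vphi}_{H^{s,\d}(\O)}$ alone. A direct estimate of $\vcN\vphi$ would need norms of $\vphi$ near $\partial\O$ beyond $H^{s,\d}$, and these degenerate as $\supp\vphi$ approaches $\partial\O$. The lift-based operator $\NN$ of Definition~\ref{def: NN} avoids this, since its norm depends only on $\norm{\bell^s_\d}$ and hence on the hypothesis \ref{H1} via Theorem~\ref{thm: psi}. The main thing to check is the identification $\NN\vphi=\vcN\vphi$ from Proposition~\ref{prop: characterize N}, which is exactly what Corollary~\ref{coro: extend} packages.

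Finally, let $u\in\mathring H^{s,\d}(\O)$ with $\vphi_k\to u$ in $H^{s,\d}(\O)$. The upper bound makes $\set{E_0\vphi_k}$ Cauchy in $H^{s,\d}(\R^n)$, with limit $E_0u$ by the $L^2$ remark above; thus $E_0u\in\widetilde H^{s,\d}(\O)$ and the same bound holds for $u$. Conversely, for $v\in\widetilde H^{s,\d}(\O)$ with $E_0\vphi_k\to v$, the lower bound shows $\vphi_k$ is Cauchy in $\mathring H^{s,\d}(\O)$, with limit $v|_{\O_\d}$, and $E_0(v|_{\O_\d})=v$ because $v=0$ off $\O$. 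So $E_0$ is a linear bijection that is bounded with bounded inverse.
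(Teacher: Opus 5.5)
Your proposal is correct and takes essentially the paper's route. The forward bound comes from Corollary~\ref{coro: extend}: the paper applies it directly to $u$ and to $u-u_k$, which lie in $\cA^{s,\d}_{q,\e}$, whereas you apply it only on the dense test-function subspace and extend by continuity. The inverse is the norm-nonincreasing restriction map, exactly as in the paper.
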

\begin{proof}
    For any $u\in\mathring H^{s,\d}(\O)$, there is a sequence $\set{u_k}_{k=1}^\infty\subseteq C^\infty_c(\O_\d)$ such that $\supp(u_k)\subseteq \O$ for each $k$ and $u_k\to u$ in $H^{s,\d}(\O)$. For any choice of $q$ in the interval $\left(\frac{2}{1+s},\min\set{\frac1s,2}\right)$ and $\e\in(0,\d)$, each $u_k|_{\Gamma_{-\e}}$ is in $L^\infty(\Gamma_{-\e})\subseteq L^\qbar(\Gamma_{-\e})$. So $u\in \cA^{s,\d}_{q,\e}$ and Corollary~\ref{coro: extend} ensures $E_0u$ is in $H^{s,\d}(\R^n)$ with $\norm{E_0 u}_{H^{s,\d}(\R^n)}\le C\norm{u}_{H^{s,\d}(\O)}$. Each $u_k$ is also in $\cA^{s,\d}_{q,\e}$, meaning that $u-u_k$ is as well. Thus,
    \begin{align*}
        \norm{E_0u-  E_0u_k}_{H^{s,\d}(\R^n)}\le C\norm{u-u_k}_{H^{s,\d}(\O)}\to 0.
    \end{align*}
    Since $\set{E_0u_k}_{k=1}^\infty$ is a sequence in $C^\infty_c(\R^n)$ such that each $E_0u_k$ has support in $\O$, we have shown that $u\in \widetilde H^{s,\d}(\O)$ by directly verifying the definition. This shows that $E_0$ sends $\mathring H^{s,\d}(\O)$ into $\widetilde H^{s,\d}(\O)$ and we have already shown that $E_0: \mathring H^{s,\d}(\O)\to H^{s,\d}(\R^n)$ is a bounded linear operator in Corollary~\ref{coro: extend}.

    Similar reasoning shows that the restriction map $R: \widetilde H^{s,\d}(\O)\to  L^2(\O_\d)$ given by $Ru \coloneqq u|_{\O_\d}$ maps into $\mathring H^{s,\d}(\O)$. Additionally, $R$ acts as a continuous inverse to $E_0$ with $\norm{Ru}_{H^{s,\d}(\O)}\le \norm{u}_{\widetilde H^{s,\d}(\O)}$.
\end{proof}

\subsection{\texorpdfstring{
Characterizing $\mathring H^{s,\d}(\O)$
}{
Characterizing the Zero-Boundary Space
}}
We now present a truncated-fractional analog of Theorem~\ref{thm: char H10}. We note that the equalities in \eqref{eqt: equiv}, but not the first isomorphism, are already known; see \cite{cueto_variational_2023} and \cite{triebel_interpolation_1978}.

\begin{theorem}\label{thm: char Hsd0}
    If \ref{H1} holds, then the following are equivalent characterizations of the space $\mathring H^{s,\d}(\O)$.
    \begin{enumerate}[label=(TF\arabic*)]
    \item $\mathring H^{s,\d}(\O)$ is the closure of $C^\infty_c(\O)$ with respect to the $H^{s,\d}(\O)$-norm. \label{property: (s,d) closure of test functions}

    \item $\mathring H^{s,\d}(\O) = \set{u|_{\O_\d} : u\in H^{s,\d}(\R^n), \ \supp u\subseteq \overline\O}$. Furthermore, $\mathring H^{s,\d}(\O)$ can be identified with the following complementary-value spaces (which are identical as sets and with equivalent norms) via the homeomorphism $E_0$:
        \begin{align}
            \mathring H^{s,\d}(\O)\cong E_0(\mathring H^{s,\d}(\O)) = \widetilde H^{s,\d}(\O) = \widetilde H^s(\O) = \widetilde B^s_{2,2}(\O)=\widetilde W^{s,2}(\O). \label{eqt: equiv}
        \end{align}\label{property: (s,d) zero extension}

    \item $\mathring H^{s,\d}(\O)$ is the space of functions $u\in Z^{s,\d}(\O)$ such that $u/d_\O^s \in L^2(\O_\d)$.\label{property: (s,d) hardy}

    \item $\mathring H^{s,\d}(\O)$ is the closed subspace of $Z^{s,\d}(\O)$ where the distributional nonlocal Green's identity \eqref{eqt: greensDist} holds.
    \label{property: (s,d) IBP}
    \end{enumerate}
    Moreover, $\mathring H^{s,\d}(\O)$ has these additional properties:
    \begin{enumerate}[label=(TF5\roman{enumi})]
        \item If $u\in \mathring H^{s,\d}(\O)$ and $\grad^s_\d u = 0$ in $\O$, then $u=0$ in $\O_\d$. \label{property: (s,d) injective}
        \item There exists a constant $C<\infty$ such that, for all $u\in \mathring H^{s,\d}(\O)$, we have $\norm{u}_{L^2(\O_\d)}\le C\norm{\grad^s_\d u}_{L^2(\O;\R^n)}$. \label{property: (s,d) poincare}
        
    \end{enumerate}
\end{theorem}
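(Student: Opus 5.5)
We prove the theorem by showing that each of the sets in \ref{property: (s,d) zero extension}--\ref{property: (s,d) IBP} coincides with the set in \ref{property: (s,d) closure of test functions}. The latter is the definition of $\mathring H^{s,\d}(\O)$, and we use the sets $\widetilde H^{s,\d}(\O)$ and $\widetilde H^s(\O)$ as intermediaries.

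\textbf{Step 1: \ref{property: (s,d) zero extension}.} Proposition~\ref{prop: RingToTildeHomeo} gives $\mathring H^{s,\d}(\O)\cong E_0(\mathring H^{s,\d}(\O))=\widetilde H^{s,\d}(\O)$, with inverse the restriction $R$. For the identification with $\widetilde H^s(\O)$, we invoke the known fact from \cite{cueto_variational_2023} that $H^{s,\d}(\R^n)=H^s(\R^n)$ with equivalent norms. Since $\widetilde H^{s,\d}(\O)$ and $\widetilde H^s(\O)$ are then both closures of the same set of test functions in equivalent norms, they are equal, by density of $C^\infty_c(\O)$ in $\widetilde H^s(\O)$ (Remark~\ref{rem: fracSoboSp}). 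The remaining equalities $\widetilde H^s(\O)=\widetilde B^s_{2,2}(\O)=\widetilde W^{s,2}(\O)$ are the classical facts cited in Remark~\ref{rem: fracSoboSp}.

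The set description $\set{u|_{\O_\d}: u\in H^{s,\d}(\R^n),\ \supp u\subseteq\ov\O}$ follows because $\set{u\in H^{s,\d}(\R^n):\supp u\subseteq\ov\O}=\widetilde H^s(\O)=\widetilde H^{s,\d}(\O)$, with the support condition transported through the same norm equivalence, and then we apply $R$.

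\textbf{Step 2: \ref{property: (s,d) hardy} and \ref{property: (s,d) IBP}.} If $u\in Z^{s,\d}(\O)$ and $u/d_\O^s\in L^2$, then Corollary~\ref{coro: extend}(ii) gives $E_0u\in H^{s,\d}(\R^n)$ with support in $\ov\O$, so $u\in\mathring H^{s,\d}(\O)$ by Step 1. Similarly, if $u\in Z^{s,\d}(\O)$ satisfies \eqref{eqt: greensDist}, then Proposition~\ref{prop: grad of ext} gives $E_0u\in H^{s,\d}(\R^n)$, so again $u\in\mathring H^{s,\d}(\O)$.

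For the converses, take $u\in\mathring H^{s,\d}(\O)$. As in the proof of Proposition~\ref{prop: RingToTildeHomeo}, $u\in\cA^{s,\d}_{q,\e}$, so \eqref{eqt: greensDist} holds by Proposition~\ref{prop: IBP in Asd}. Moreover $E_0u\in\widetilde H^s(\O)$, and the classical fractional Hardy inequality on $C^{1,1}$ (indeed Lipschitz) domains gives $u/d_\O^s\in L^2(\O)$ for elements of $\widetilde H^s(\O)$. The relevant reference is \cite{dyda_density_2022}, or \cite{triebel_interpolation_1978} for $s\neq1/2$; at $s=1/2$ the norm is the $\widetilde H^{1/2}$ (Lions--Magenes $H^{1/2}_{00}$) norm, for which the Hardy inequality does hold. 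Since $u=0$ on $\Gamma_\d$, we get $u/d_\O^s\in L^2(\O_\d)$.

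Closedness of the subspace in \ref{property: (s,d) IBP} follows from the continuity of $\NN$ (Lemma~\ref{lem: NN is bdd}), exactly as in the limit argument of Proposition~\ref{prop: IBP in Asd}.

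\textbf{Step 3: \ref{property: (s,d) injective} and \ref{property: (s,d) poincare}.} For \ref{property: (s,d) injective}, suppose $u\in\mathring H^{s,\d}(\O)$ and $\grad^s_\d u=0$. Then $u\in N^{s,2,\d}(\O)$ and $u|_{\Gamma_\d}=0$. Using $\bvphi=\bell^s_\d\bbf$ in \eqref{eqt: greensDist} is awkward, so we use the extension instead: $\grad^s_\d(E_0u)=-\NN u$ on $\Gamma_\d$, where $\langle\NN u,\bbf\rangle=\int_\O\bell^s_\d\bbf\cdot\grad^s_\d u=0$. Hence $\grad^s_\d(E_0u)\equiv0$ on $\R^n$. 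On full space, $\grad^s_\d$ is a Fourier multiplier with symbol that is nonzero away from the origin (equivalently, $\grad^s_\d=\grad^s\circ Q_\d$ with $Q_\d$ invertible on $L^2$, as in \cite{bellido_non-local_2023}), and $E_0u\in L^2(\R^n)$. Therefore $E_0u=0$.

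For \ref{property: (s,d) poincare}, we argue by contradiction and compactness. Suppose $\norm{u_k}_{L^2(\O_\d)}=1$ and $\grad^s_\d u_k\to0$. Then $E_0u_k$ is bounded in $\widetilde H^s(\O)$, which embeds compactly into $L^2$ by Rellich. So a subsequence converges in $L^2$ to some $u$, and since $\grad^s_\d u_k\to0$, the sequence is Cauchy in $H^{s,\d}$. The limit $u$ lies in the closed space $\mathring H^{s,\d}(\O)$, has zero gradient and norm $1$, which contradicts \ref{property: (s,d) injective}.

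The main obstacle is the Hardy direction $\mathring H^{s,\d}\subseteq\{u/d_\O^s\in L^2\}$, especially at $s=1/2$, where it must come from the $\widetilde H^s$ identification of Step 1 rather than from the intrinsic norm.
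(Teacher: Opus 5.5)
Your proposal is correct and follows essentially the same route as the paper. You get \ref{property: (s,d) zero extension} from Proposition~\ref{prop: RingToTildeHomeo} and the full-space equivalence $H^{s,\d}(\R^n)=H^s(\R^n)$. You get \ref{property: (s,d) hardy} and \ref{property: (s,d) IBP} from Corollary~\ref{coro: extend}, Proposition~\ref{prop: grad of ext} and the classical Hardy inequality for $\widetilde W^{s,2}(\O)$. You get \ref{property: (s,d) injective} from $\NN u=\mathbf 0$ together with the Fourier symbol of $\grad^s_\d$ on $\R^n$.

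The one difference is cosmetic. You derive \ref{property: (s,d) poincare} from \ref{property: (s,d) injective} by an explicit compactness--contradiction argument (boundedness of $E_0$ into $\widetilde H^s(\O)$ plus Rellich), whereas the paper packages the same reasoning by citing Remark 4.6 of \cite{kreisbeck_non-constant_2024}.
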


\begin{proof}
    Statement~\ref{property: (s,d) closure of test functions} is the definition of $\mathring H^{s,\d}(\O)$.

    For \ref{property: (s,d) zero extension}, recall that Proposition~\ref{prop: RingToTildeHomeo} already established that the operator $E_0: \mathring H^{s,\d}(\O)\to \widetilde H^{s,\d}(\O)$ is a homeomorphism. Roughly speaking, $\widetilde H^{s,\d}(\O)$ is the closure of $C^\infty_c(\O)$ in the $H^{s,\d}(\R^n)$ norm. But Lemma 5 of \cite{cueto_variational_2023} showed that this norm is equivalent to the $H^s(\R^n)$ norm, meaning that $\widetilde H^{s,\d}(\O)$ is also the closure of $C^\infty_c(\O)$ with respect to the $H^s(\R^n)$ norm. As noted in Remark~\ref{rem: fracSoboSp}, the assumption on the boundary regularity of $\O$ ensures the closure of $C^\infty_c(\O)$ in $H^s(\R^n)$ is $\widetilde H^s(\O)$. Hence,
    \begin{align*}
        \widetilde H^{s,\d}(\O) = \overline{C^\infty_c(\O)}^{H^{s,\d}(\R^n)}= \overline{C^\infty_c(\O)}^{H^{s}(\R^n)} = \widetilde H^s(\O).
    \end{align*}
    The other equivalences claimed in \eqref{eqt: equiv} then follow from the facts recalled in Remark~\ref{rem: fracSoboSp} and references therein.

    Next, we turn to \ref{property: (s,d) hardy}. If $u\in\mathring H^{s,\d}(\O)$, then \ref{property: (s,d) zero extension} implies that $E_0u\in \widetilde W^{s,2}(\O)$. Recall also that $\O$ has $C^{1,1}$ boundary. We may therefore apply classical results such as Remark 4.3.2/2 (especially Equation (7)) and Lemma 3.2.6/1 of \cite{triebel_interpolation_1978} or Lemma 1.3.2.6 of \cite{grisvard_elliptic_1985} to see
    \begin{align*}
        \norm{u/d_\O^s}_{L^2(\O_\d)}=\norm{u/d_\O^s}_{L^2(\O)}<\infty.
    \end{align*}
    Conversely, if $u\in Z^{s,\d}(\O)$ with $u/d_\O^s\in L^2(\O_\d)$, then Corollary~\ref{coro: extend} implies $E_0 u\in H^{s,d}(\R^n)$. By \ref{property: (s,d) zero extension}, we conclude $u\in\mathring H^{s,\d}(\O)$.

    For \ref{property: (s,d) IBP}, Proposition~\ref{prop: characterize N} says that any function in $Z^{s,\d}(\O)$ satisfying \eqref{eqt: greensDist} may be extended by zero to an element of $H^{s,\d}(\R^n)$. By \ref{property: (s,d) zero extension}, any such function is in $\mathring H^{s,\d}(\O)$. Conversely, any $u\in \mathring H^{s,\d}(\O)$ is in $\cA^{s,\d}_{q,\e}$ and $u/d_\O^s\in L^2(\O_\d)$. Hence, Corollary~\ref{coro: extend} asserts that $u$ satisfies \eqref{eqt: greensDist}.

    For the additional properties, first suppose that $u\in\mathring H^{s,\d}(\O)$ and $\grad^s_\d u=0$ in $\O$. Then, by definition of $\NN u$, we have $\NN u=0$ in $\Gamma_\d$. Since $u\in\mathring H^{s,\d}(\O)$, Proposition~\ref{prop: grad of ext} gives a formula for the gradient of $E_0$, which must be zero a.e. Thus, $E_0u\in H^{s,\d}(\R^n)$ with $\grad^s_\d(E_0u)=0$ a.e., meaning that $u$ is the zero element of $\mathring H^{s,\d}(\O)$, which follows from the Fourier characterization of $\grad^s_\d: H^{s,\d}(\R^n)\to L^2(\R^n)$ as a symbol vanishing only at the origin.

    For Property~\ref{property: (s,d) poincare}, Remark 4.6 of \cite{kreisbeck_non-constant_2024}, and the fact that $\mathring H^{s,\d}(\O)$ is closed in $H^{s,\d}(\O)$ and compactly embedded in $L^2(\O)$ all together imply that $\mathring H^{s,\d}(\O)$ satisfies a fractional Poincar\'e-type inequality. This, in turn, implies~\ref{property: (s,d) injective} completing the proof.
\end{proof}

\begin{remark}
    The theorem above is quite similar to the classical result, Theorem~\ref{thm: char H10}, but with some important differences. For one, the implications of \ref{property: (s,d) hardy} are quite dependent on $s$, as discussed in the next remark. Additionally, the nonlocal integration by parts rule, \eqref{eqt: greensDist}, does not have zero on the right-hand side. This is an unavoidable consequence of allowing for nonlocal interactions across the boundary. If we restrict to testing against $\bv$ which are sufficiently regular but are also zero on $\Gamma_\d$, then the nonlocal interaction term on the right-hand side of \eqref{eqt: greensDist} vanishes.

    Perhaps the most striking difference is that Theorem~\ref{thm: char Hsd0} contains no fractional analog of \ref{property: trace zero}. In the classical setting, enforcing a zero-trace condition is the most direct way of translating physical boundary conditions into the mathematical framework. Theorem~\ref{thm: char H10} shows that just assuming the trace is zero is enough to automatically gain many other nice properties. This is not true in the truncated-fractional case, as will be further discussed in Section~\ref{sec: zeroCollar}.
\end{remark}

\begin{remark}
    Under the hypotheses of Theoerem~\ref{thm: char Hsd0}, we have
    \[
    s<\frac12 \implies \mathring H^{s,\d}(\O)\cong E_0(\mathring H^{s,\d}(\O))=\widetilde W^{s,2}(\O) =E_0( \mathring W^{s,2}(\O) )= E_0(W^{s,2}(\O)).
    \]
    This is because \ref{property: (s,d) zero extension} says, roughly, that the closure of $C^\infty_c(\O)$ is the same under any of the following norms: $\norm{\cdot}_{H^{s,\d}(\O)}$, $\norm{\cdot}_{H^{s,\d}(\R^n)}$, and $\norm{\cdot}_{W^{s,2}(\R^n)}$. When $s<1/2$, we further know that $\widetilde W^{s,2}(\O) = \mathring W^{s,2}(\O) = W^{s,2}(\O)$ (see \cite[Theorem 3.33]{mclean_strongly_2000} or \cite[Theorem 4.3.2/1]{triebel_interpolation_1978}). Additional relevant discussion of fractional Sobolev spaces, trace theorems, and extension operators may be found in \cite{dyda_function_2019}.

    One consequence of this identification is highlighted in Lemma 13 of \cite{dyda_density_2022}: when $s<1/2$, the indicator function $\chi_\O\in L^2(\O_\d)$ is in $\mathring H^{s,\d}(\O)$. Thus, jump discontinuities across the boundary are possible in $\mathring H^{s,\d}(\O)$ whenever $s<1/2$. This can also be seen from \ref{property: (s,d) hardy}: when $s<1/2$, there is no need for $u(\bx)$ to go to zero as $\bx\to\partial\O$ in order for $u/d_\O^s$ to be square-integrable. This is of course consistent with the idea that the local notion of ``zero-trace" is only sensible when working in spaces with at least $s>1/2$ order of differentiability.
\end{remark}

\begin{remark}
    Compare the definition of $\mathring H^{s,\d}(\O)$ with the definition of $\mathcal A^{s,\d}_{q,\e}$. The former requires an approximating sequence in $C^\infty_c(\O)$, while the latter requires approximation only by $Z^{s,\d}(\O)$ functions which satisfy an integrability criterion on some inner tubular neighborhood of $\partial\O$. A priori, the former is a much more restrictive requirement. However, Theorem~\ref{thm: char Hsd0} shows that the two are equivalent: if $u\in\cA^{s,\d}_{q,\e}$, then Corollary~\ref{coro: extend} implies $E_0 u\in \widetilde H^{s,\d}(\O)$, meaning $u\in\mathring H^{s,\d}(\O)$. Thus, Theorem~\ref{thm: char Hsd0} allows us to replace approximation by test functions with an apparently weaker condition that may be easier to check in practice.
\end{remark}

\begin{remark}
    In Lemma 5 of \cite{cueto_variational_2023}, it is shown that
    \[
    \overline{\set{u\in C^\infty_c(\R^n): \supp u\subseteq \O}}^{H^{s,\d}(\R^n)} = \widetilde H^s(\O_{-\d})
    \]
    when $\O_{-\d}$ has sufficient regularity.
    Later, Theorem 3.9(iii) of \cite{bellido_nonlocal_2025}, established
    \[
    \widetilde H^{s,\d}(\O) =\set{u\in H^{s,\d}(\R^n): \supp u\subseteq\overline\O}.
    \]
    Our \ref{property: (s,d) zero extension} connects these results to the space $\mathring H^{s,\d}(\O)$, which is the closure of test functions in $H^{s,\d}(\O)$ rather than $H^{s,\d}(\R^n)$. Additionally, we avoid working with $\O_{-\d}$. This means there are no explicit constraints on what values functions may attain within $\O$, and allows us to consider more general domains $\O$.

    One benefit of identifying $\mathring H^{s,\d}(\O)$ with $\widetilde H^s(\O)$ is that the dual space of $\widetilde H^s(\O)$ is well understood (see \cite{chandler-wilde_interpolation_2015,mclean_strongly_2000}; note also a correction to these in \cite{chandlerwilde_corrigendum_2022}). Additionally, this identification allows for the application of results from the theory of interpolation spaces to be applied to $\mathring H^{s,\d}(\O)$.
\end{remark}

\section{\texorpdfstring{
The Kernel of the Restrict-to-the-Collar Operator:
$Z^{s,\d}(\O)$
}{
The Kernel of the Restrict-to-the-Collar Operator
}}
\label{sec: zeroCollar}
The space $H^1_0(\O)$ is often understood as the technical, precise way of making sense of Sobolev functions which are zero on the boundary. From this viewpoint, the most direct nonlocal analog would be the space of fractional-Sobolev functions which are zero on the collar. Indeed, many papers consider the collar as a ``thick boundary", a set of positive measure where volume constraints are imposed in analogy with classical boundary conditions. In this section, we explore the space $Z^{s,\d}(\O)$. Definition~\ref{def: Zsd} defines this space as the subspace of $H^{s,\d}(\O)$ consisting of functions which are zero a.e. on the outer collar $\Gamma_\d$.

The space $Z^{s,\d}(\O)$ is motivated by property~\ref{property: trace zero} of $H^1_0(\O)$: $Z^{s,\d}(\O)$ is the nonlocal version of zero-trace Sobolev functions. However, in the current setting this space does \textbf{not} coincide with the closure of test functions (property~\ref{property: closure of test functions}). Among other things, this means that a bounded extension-by-zero operator cannot exist on all of $Z^{s,\d}(\O)$. This does not contradict the result of \cite{kreisbeck_non-constant_2024} which states that extensions are always possible modulo $N^{s,\d}(\O)$; here we do not mod out by $N^{s,\d}(\O)$, and we consider only the zero extension.

We first point out an immediate consequence of Theorem~\ref{thm: psi}, which we use to build an example of $h\in Z^{s,\d}(\O)$ which cannot be approximated by $C^\infty_c(\O)$.

\begin{corollary}[\cite{kreisbeck_non-constant_2024}]
    If \ref{H1} holds, then the intersection $N^{s,\d}(\O)\cap Z^{s,\d}(\O)$ is a one dimensional vector space. In particular, it is not the trivial space $\set{0}$.
\end{corollary}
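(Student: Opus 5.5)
The plan is to read the intersection directly off the isomorphism $\Psi^s_\d$ of Theorem~\ref{thm: psi}, taking $p=2$. This choice is admissible because $s\in(0,1)$ gives $\frac{2}{1-s}>2$, so $2\in(1,\frac{2}{1-s})$. Hence $\Psi^s_\d:N^{s,\d}(\O)\to\R\times L^2(\Gamma_\d)$ is a linear bijection. An element $h\in N^{s,\d}(\O)$ lies in $Z^{s,\d}(\O)$ exactly when $h|_{\Gamma_\d}=0$. Under $\Psi^s_\d$, the intersection $N^{s,\d}(\O)\cap Z^{s,\d}(\O)$ is therefore mapped bijectively and linearly onto the subspace $\R\times\{0\}\subseteq\R\times L^2(\Gamma_\d)$.

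The key steps are as follows. First I will check the inclusion $\Psi^s_\d(N^{s,\d}\cap Z^{s,\d})\subseteq \R\times\{0\}$, which is immediate from the definitions. For the reverse inclusion, given $c\in\R$, I will set $h_c:=(\Psi^s_\d)^{-1}(c,0)$. Then $h_c\in N^{s,\d}(\O)$, and its collar component is $0$, so $h_c|_{\Gamma_\d}=0$ a.e. Since $N^{s,\d}(\O)\subseteq H^{s,\d}(\O)$, this gives $h_c\in Z^{s,\d}(\O)$. Restricting the linear isomorphism $\Psi^s_\d$ then yields an isomorphism $N^{s,\d}\cap Z^{s,\d}\cong\R$, so the intersection has dimension exactly one.

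For nontriviality I will take $h_1=(\Psi^s_\d)^{-1}(1,0)$. It satisfies $\int_\O h_1\,d\bx=1$, so $h_1\neq0$. In particular $h_1$ is a nonzero function that vanishes on the collar and has $\grad^s_\d h_1=0$ in $\O$. This element is the counterexample the paper subsequently uses.

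I do not expect any real obstacle, since everything rests on Theorem~\ref{thm: psi}. The only point needing care is confirming that $p=2$ is in the allowed range and that the collar component of $\Psi^s_\d$ is taken in the a.e. sense, matching Definition~\ref{def: Zsd}. Equivalently, one can note that $h_1$ is just the lift-type element $\ell$ of Definition~\ref{def: lift} with the mean-value slot set to $1$ instead of $0$.
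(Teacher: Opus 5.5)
Your proposal is correct and matches the paper's proof: both identify $N^{s,\d}(\O)\cap Z^{s,\d}(\O)$ with $(\Psi^s_\d)^{-1}(\R\times\{0\})$ via Theorem~\ref{thm: psi}. Your additional checks make explicit what the paper leaves implicit, namely that $p=2$ lies in $(1,\frac{2}{1-s})$ because $s>0$, and that both inclusions hold.
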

\begin{proof}
    The paper \cite{kreisbeck_non-constant_2024} completely characterizes $N^{s,\d}(\O)$ in the current setting. One version is quoted above as Theorem~\ref{thm: psi}, which gives one of several routes to prove the current claim. For example, the inverse map $\left(\Psi^s_\d\right)^{-1}: \R\times L^p(\Gamma_\d)\to N^{s,\d}(\O)$ is a vector space isomorphism and so $\left(\Psi^s_\d\right)^{-1}\left(\cdot,0\right)$ is one-dimensional.
\end{proof}

\begin{proposition}\label{prop: notDense}
    If \ref{H1} holds, then there are elements of $Z^{s,\d}(\O)$ which may not be approximated in $H^{s,\d}(\O)$ by test functions supported in $\O$. In other words, $\set{\vphi\in C^\infty_c(\O_\d): \supp\vphi\subseteq\O}$ is not dense in $Z^{s,\d}(\O)$.
\end{proposition}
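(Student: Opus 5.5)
Take a nonzero element $h$ of the one-dimensional space $N^{s,\d}(\O)\cap Z^{s,\d}(\O)$ from the preceding corollary, for instance $h\coloneqq(\Psi^s_\d)^{-1}(1,0)$. Then $h\in Z^{s,\d}(\O)$, $\grad^s_\d h=\mathbf 0$ a.e. in $\O$, and $\int_\O h\,d\bx=1$, so $h\neq 0$ in $L^2(\O_\d)$. We show that $h\notin\mathring H^{s,\d}(\O)$; this proves the proposition, since $\mathring H^{s,\d}(\O)$ is exactly the $H^{s,\d}(\O)$-closure of the test functions supported in $\O$.

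Suppose instead that $h\in\mathring H^{s,\d}(\O)$. The Poincaré inequality \ref{property: (s,d) poincare} of Theorem~\ref{thm: char Hsd0} gives
\[
\norm{h}_{L^2(\O_\d)}\le C\norm{\grad^s_\d h}_{L^2(\O;\R^n)}=0.
\]
Hence $h=0$, contradicting $\int_\O h\,d\bx=1$. Equivalently, one can appeal to \ref{property: (s,d) injective}. The same contradiction also follows concretely: if $\vphi_k\to h$ in $H^{s,\d}(\O)$ with $\supp\vphi_k\subseteq\O$, then $\grad^s_\d\vphi_k\to 0$ in $L^2(\O;\R^n)$ while $\vphi_k\to h\neq0$ in $L^2(\O_\d)$, which violates a Poincaré inequality on the test functions.

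The main point to check is that the tools used are logically independent of the present claim. Theorem~\ref{thm: psi} needs $p=2\in(1,\frac{2}{1-s})$, which holds. \ref{property: (s,d) poincare} rests on \cite[Remark 4.6]{kreisbeck_non-constant_2024} together with compactness of $\mathring H^{s,\d}(\O)$ into $L^2$. If one prefers to avoid it, the alternative is the route through \ref{property: (s,d) injective}. Membership $h\in\mathring H^{s,\d}(\O)$ would give $E_0h\in H^{s,\d}(\R^n)$ by Proposition~\ref{prop: RingToTildeHomeo}, with $\grad^s_\d(E_0h)$ given by \eqref{eqt: grad of ext}. There $\NN h=0$ because $\grad^s_\d h=0$ in $\O$ (definition of $\NN$). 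So $\grad^s_\d(E_0h)=0$ on all of $\R^n$. The Fourier symbol of $\grad^s_\d$ vanishes only at the origin, so $E_0h$, being in $L^2(\R^n)$, must vanish, which is again a contradiction. I expect no serious obstacle beyond confirming these earlier results apply.
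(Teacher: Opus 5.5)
Your proof is correct and takes essentially the paper's approach. The paper also picks a nonzero $u\in N^{s,\d}(\O)\cap Z^{s,\d}(\O)$, assumes $u\in\mathring H^{s,\d}(\O)$, and uses Proposition~\ref{prop: grad of ext} with $\NN u=\mathbf 0$ to get $\grad^s_\d(E_0u)=\mathbf 0$ on $\R^n$, hence $E_0u=0$. That is exactly your alternative route; the identity \eqref{eqt: greensDist} it needs is supplied by \ref{property: (s,d) IBP}. Your primary route, which cites \ref{property: (s,d) poincare} or \ref{property: (s,d) injective} of Theorem~\ref{thm: char Hsd0}, is equally valid, since those properties package the same argument.
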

\begin{proof}
    Pick some nonzero $u\in N^{s,\d}(\O)\cap Z^{s,\d}(\O)$. Assume for the sake of contradiction that there is a sequence of functions $\set{u_k}_{k=1}^\infty\subseteq C^\infty_c(\O_\d)$ with $\supp u_k\subseteq\O$ and $u_k\to u$ in $H^{s,\d}(\O)$. Then $u\in \mathring H^{s,\d}(\O)$, by definition. Theorem~\ref{thm: char Hsd0} and Proposition~\ref{prop: grad of ext} may be applied to characterize the nonlocal gradient of $E_0u$ in $\R^n$ in terms of $\NN u$ as defined Definition~\ref{def: NN}. Since $u\in N^{s,\d}(\O)$, we have $\grad^s_\d u=\mathbf 0$ in $\O$ and $\NN u=\mathbf 0$ in $\Gamma_\d$. Thus, $\grad^s_\d(E_0 u)$ is zero pointwise a.e. in $\R^n$. This is only possible if $E_0 u$ is identically zero. In particular, $u$ must have been zero in $\O_\d$ and we have obtained our contradiction.
\end{proof}

It is now clear that $Z^{s,\d}(\O)$ and $\mathring H^{s,d}(\O)$ are distinct spaces; for example, the former contains nonzero elements of $N^{s,\d}(\O)$ while the latter does not. As a consequence, $Z^{s,\d}(\O)$ lacks the properties characterizing $\mathring H^{s,\d}(\O)$ in Theorem~\ref{thm: char Hsd0}.
\begin{corollary}\label{coro: Zsd}
    Assume \ref{H1}. Then there is a nonzero function $u_*\in Z^{s,\d}(\O)$ such that
    \begin{enumerate}
        \item $u_*$ is not in $\mathring H^{s,\d}(\O)$ and therefore cannot be realized as an $H^{s,\d}(\O)$-limit of functions in $C^\infty_c(\O)$.
        \item $E_0u_*\notin H^{s,\d}(\R^n)$, meaning that $E_0$ does not map $Z^{s,\d}(\O)$ into $H^{s,\d}(\R^n)$.
        \item The ratio $u_*/d_\O^s$ is not in $L^2(\O_\d)$, implying that $u_*$ need not decay to zero or even remain bounded as $\bx$ approaches $\partial\O$.
        \item $u_*$ fails the distributional nonlocal Green's identity \eqref{eqt: greensDist}.
        \item $\grad^s_\d u_*=\mathbf 0$ in $\O$ even though $u_*\not\equiv0$ in $\O_\d$.
        \item $u_*$ violates the fractional Poincar\'e identity: for all
        $C>0$, \[\norm{u_*}_{L^2(\O_\d)}>C\norm{\grad^s_\d u_*}_{L^2(\O;\R^n)}.\]
    \end{enumerate}
\end{corollary}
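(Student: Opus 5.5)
Take $u_*$ to be any nonzero element of $N^{s,\d}(\O)\cap Z^{s,\d}(\O)$. This intersection is one-dimensional by the preceding corollary, so such a $u_*$ exists; concretely $u_*=(\Psi^s_\d)^{-1}(1,0)$ from Theorem~\ref{thm: psi}. By construction $u_*=0$ a.e. on $\Gamma_\d$ and $\grad^s_\d u_*=\mathbf 0$ a.e. in $\O$. Also $u_*\not\equiv 0$ in $\O_\d$, since $\int_\O u_*\,d\bx=1$. This gives item (5) immediately.

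Item (6) follows at once: the right-hand side $C\norm{\grad^s_\d u_*}_{L^2(\O;\R^n)}$ vanishes for every $C>0$, while $\norm{u_*}_{L^2(\O_\d)}>0$.

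Item (1) is exactly the argument of Proposition~\ref{prop: notDense}. If $u_*\in\mathring H^{s,\d}(\O)$, then \ref{property: (s,d) injective} of Theorem~\ref{thm: char Hsd0} forces $u_*=0$ in $\O_\d$, which is a contradiction. The remaining items then come from the equivalences in Theorem~\ref{thm: char Hsd0}, applied contrapositively:
\begin{itemize}
\item[(2)] If $E_0u_*\in H^{s,\d}(\R^n)$, then, since $u_*$ vanishes on $\Gamma_\d$, $\supp E_0u_*\subseteq\ov\O$. Characterization \ref{property: (s,d) zero extension} would then put $u_*\in\mathring H^{s,\d}(\O)$, contradicting (1).
\item[(3)] If $u_*/d_\O^s\in L^2(\O_\d)$, then \ref{property: (s,d) hardy} (or directly Corollary~\ref{coro: extend}(ii)) would put $u_*$ in $\mathring H^{s,\d}(\O)$, again contradicting (1).
\item[(4)] If $u_*$ satisfied \eqref{eqt: greensDist}, then \ref{property: (s,d) IBP} together with Proposition~\ref{prop: grad of ext} would again place $u_*\in\mathring H^{s,\d}(\O)$.
\end{itemize}

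For (4) a direct check is also available. Since $\grad^s_\d u_*=0$, we have $\NN u_*=0$ by Definition~\ref{def: NN}, so \eqref{eqt: greensDist} would reduce to $\int_\O u_*\diver^s_\d\bvphi\,d\bx=0$ for all $\bvphi\in C^\infty_c(\R^n;\R^n)$. That is precisely the statement $\grad^s_\d(E_0u_*)=0$ on $\R^n$, which forces $E_0u_*=0$ by the Fourier characterization.

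The only step needing care is the interpretive part of (3): that $u_*$ ``need not decay or remain bounded.'' The rigorous content is just $u_*/d_\O^s\notin L^2$. The informal gloss follows because a function vanishing on the collar and bounded by $C d_\O^{\,s+\eta}$ near $\partial\O$, for some $\eta>0$, would have $u_*/d_\O^s$ square-integrable (using Proposition~\ref{prop: distIntegrable}). No new estimates are needed beyond the cited theorems.
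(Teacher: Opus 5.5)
Your proposal is correct and follows the paper's route: take a nonzero $u_*\in N^{s,\d}(\O)\cap Z^{s,\d}(\O)$, rule out $u_*\in\mathring H^{s,\d}(\O)$ as in Proposition~\ref{prop: notDense}, and read off the remaining items as failures of the characterizations in Theorem~\ref{thm: char Hsd0}. Your direct check of (4) via $\NN u_*=0$ is exactly the mechanism behind that proposition. One small sharpening of your gloss on (3): your $d_\O^{s+\eta}$ bound only rules out fast decay and does not actually need Proposition~\ref{prop: distIntegrable}, whereas applying that proposition with $\kappa=-2s$ shows that for $s<\frac12$ any element of $Z^{s,\d}(\O)$ bounded near $\partial\O$ would have $u/d_\O^s\in L^2(\O_\d)$, so in that range $u_*$ is genuinely unbounded.
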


\begin{example}\label{ex: ustar}
    We provide a numerical approximation to a function $u_*$ given in Corollary~\ref{coro: Zsd}. We set $n=1$, $\O=(-1,1)$, $s=1/4$, $\d=1/10$, $b_0=1/3$, and 
    \begin{minipage}{0.5\textwidth}
    \begin{align*}
        \bar w_\d(r) = \begin{cases}
            1, & 0\le r \le b_0\d,\\
            \frac{g(\d-r)}{ g(\d-r) + g(r-b_0\d) }, & b_0\d<r<\d,\\
            0, & r\ge \d,
        \end{cases}
    \end{align*}
    \end{minipage}
    \begin{minipage}{0.48\textwidth}

\includegraphics[width=0.8\textwidth]{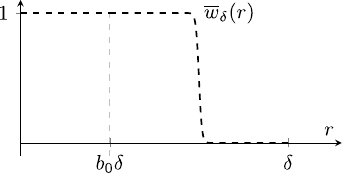}

\end{minipage}

    \noindent
    where $g(t) = e^{-1/t}$.
    Roughly speaking, we construct an approximation to $u_*\in Z^{s,\d}(\O)$ by solving a discretized version of the linear system $\grad^s_\d u_* = \mathbf 0$; some details are provided in Appendix~\ref{app: numerics}. The result is plotted in Figure~\ref{fig: ustar}

    The function $u_*$ is symmetric about the origin, and it is nearly constant on the interval $[-0.8,0.8]$:
    \begin{align*}
        \left(\max_{x\in[-0.8,0.8]} u_*(x)\right)-\left(\min_{x\in[-0.8,0.8]} u_*(x)\right)\approx3.1148\times 10^{-6}.
    \end{align*}
    Since $\d=0.1$, the region $[-0.8,0.8]$ is not sensitive to the behavior near the endpoints. However, closer to the boundary, $u_*$ behaves like a singular power function. For example, take $c = 1-e^{-5.5}\approx 0.9959$ and consider the interval $(c,1)$. Our highest resolution discretization included $N=6145$ points on $[-1,1]$, and $176$ of those points lie in $(c,1)$. Using these points, we fit a power function by taking logarithms and performing a linear least squares regression. We obtained
    \begin{align*}
        u_*(x)\approx 0.0767(1-x)^{-0.37036},  \ \ x\in(c,1).
    \end{align*}
    with $R^2=0.999921$. Different choices of $c$ and different function-fitting techniques yield slightly different values; however, all the numerical evidence suggests that the exponent lies in the interval $[-.3737,-.3677]$. Thus, $u_*\in L^2(\O_\d)$, but $u/d_\O^s\notin L^2(\O_\d)$. We also note that any exponent in this range would imply $u_*\in L^{8/3}(\O_\d)$. This is consistent with \cite{kreisbeck_non-constant_2024}, which predicts that $u_*\in L^q(\O_\d)$ for $q<\frac{p}{1-s} = \frac2{1-1/4} = \frac83$. 
    
    \begin{figure}[!ht]
    \centering
    \includegraphics[width=\linewidth]{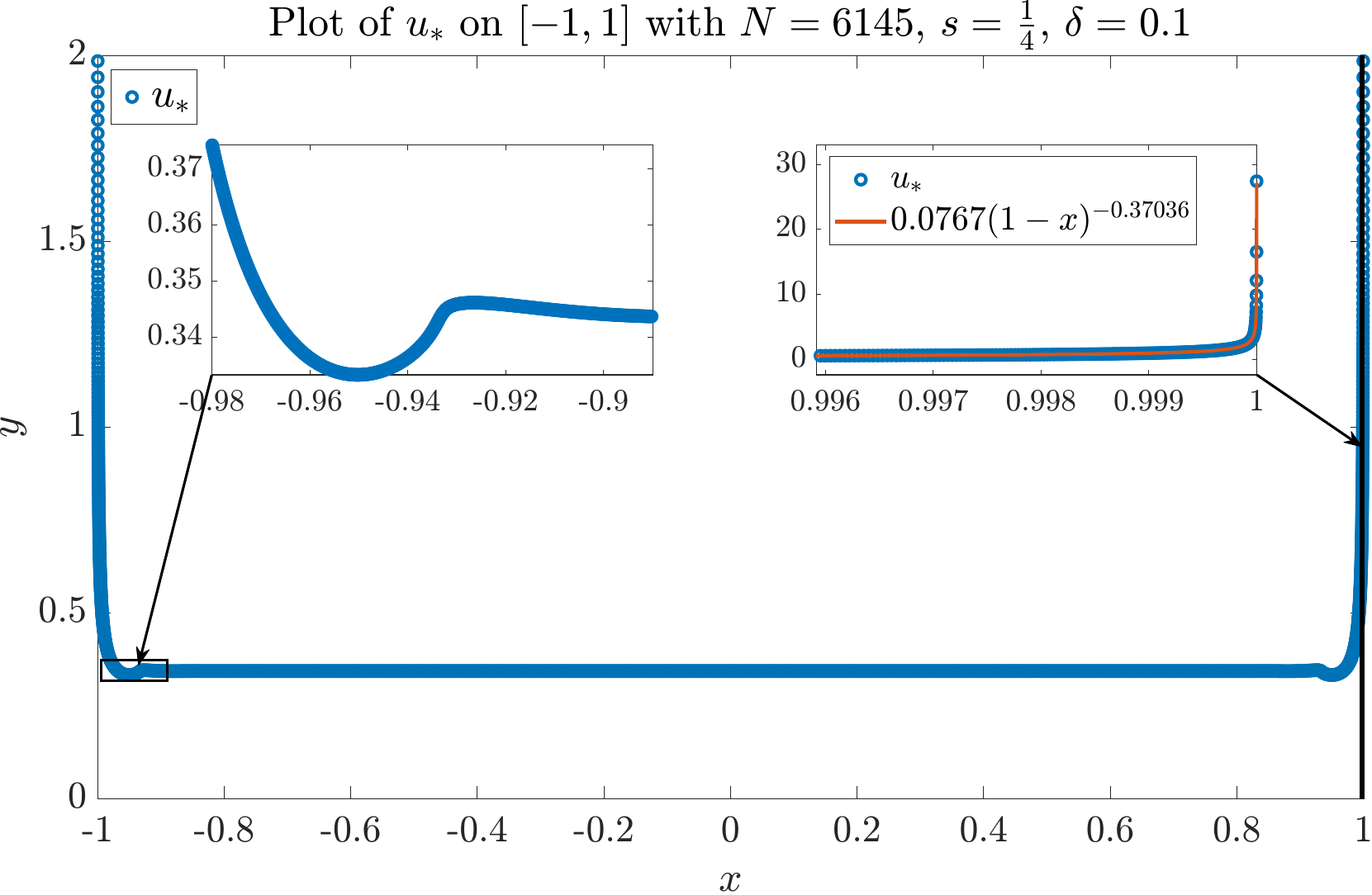}
    \caption{A plot of the numerical approximation of $u_*$ described in Example~\ref{ex: ustar} and Appendix~\ref{app: numerics}.}
    \label{fig: ustar}
\end{figure}
\end{example}

\begin{remark}
    Let us clarify an important point regarding the choice of $p=2$. The theory developed here would look quite different if some additional integrability were required.
    
    \begin{enumerate}[label=(\alph*)]
        \item The distributional nonlocal normal $\NN$, defined in terms of the lift $\bell^s_\d$, played a key role in the proof of Theorem~\ref{thm: char Hsd0}. But the lift $\bell^s_\d$ cannot be defined if the exponent $p=2$ is replaced by some $p\ge \frac2{1-s}$, as highlighted by Remark 3.13 of \cite{kreisbeck_non-constant_2024}. For larger choices of $p$, $\Psi^s_\d: N^{s,p,\d}(\O)\to \R\times L^p(\Gamma_\d)$ fails to be surjective.
        
        \item A function $u\in Z^{s,\d}(\O)$ is in $\mathring H^{s,\d}(\O)$ as long as it is in either $\cA^{s,\d}_{q,\e}$ or if $u/d_\O^s\in L^2(\O_\d)$. These are both ways of quantifying the same idea that some additional integrability is required in a neighborhood of the boundary. This suggests that the deciding factor for whether $\mathring H^{s,p,\d}(\O)$ coincides with $Z^{s,p,\d}(\O)$ is how much integrability elements of $Z^{s,p,\d}(\O)$ have. Alternatively, if one wishes to work in a space where the zero-extension is continuous, it apparently suffices to just assume that the functions have some additional decay near $\partial \O$.

        \item Similar to the last point, Proposition~\ref{prop: notDense} suggests that the difference between $\mathring H^{s,\d}(\O)$ and $Z^{s,\d}(\O)$ is that the latter contains some nonzero elements of $N^{s,\d}(\O)$. Proposition 3.12 of \cite{kreisbeck_non-constant_2024} notes that $N^{s,p,\d}(\O)$ only shrinks as $p$ increases and leaves open the possibility that $N^{s,p,\d}(\O)\cap Z^{s,p,\d}(\O)$ may be trivial for sufficiently large $p$.
    \end{enumerate}
\end{remark}

Up to this point, we have only considered $s\in(0,1)$. However, the integral operator $\grad^s_\d$ defined in \eqref{eqt: Dsd} still makes sense for $s\le 0$. Thus, the norm \eqref{eqt: Hsdnorm} is well-defined and we may extend the definitions of $\mathring H^{s,\d}(\O)$ and $Z^{s,\d}(\O)$ in the natural way. It turns out that the distinction between $\mathring H^{s,\d}(\O)$ and $Z^{s,\d}(\O)$ vanishes for $s\le0$. Applications of operators given by \eqref{eqt: Dsd} with $s\le0$ include nonlocal models of heat transfer \cite{AndreuVaillo_Mazon_Rossi_ToledoMelero_2010_nonlocal_2010_book,Bobaru_Duangpanya_2010_IJHMT,Bobaru_Duangpanya_2012_JCP,chen_selecting_2015} and nonlocal elastodynamics \cite{Du2011b,EMMRICH_ETIENNE_WECKNER_OLAF_2007_CMS}, as well as nonlocal diffusion models arising in biology and materials science \cite{AndreuVaillo_Mazon_Rossi_ToledoMelero_2010_nonlocal_2010_book}.

\begin{proposition}
    Let $n$ denote a positive integer, $\O\subseteq\R^n$ a nonempty open set, and $\d>0$ a fixed parameter. If $s\le0$, then $Z^{s,\d}(\O)=\mathring H^{s,\d}(\O)$.
\end{proposition}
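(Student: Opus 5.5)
For $s\le 0$ the kernel $\balpha^s_\d$ satisfies $\abs{\balpha^s_\d(\bx,\by)}\le a_0\abs{\bx-\by}^{-n-s}$ with $-n-s\ge -n$, so $\abs{\balpha^s_\d(\bx,\cdot)}$ is integrable on $B_\d(\bx)$, uniformly in $\bx$. The plan is to exploit this: when $s\le0$ the operator $\grad^s_\d$ is bounded on $L^2$, so the $H^{s,\d}(\O)$-norm is equivalent to the $L^2(\O_\d)$-norm. The claim then reduces to the density of $C^\infty_c(\O)$ in $L^2(\O)$.

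\textbf{Step 1 (boundedness).} Set $K\coloneqq\int_{B_\d(\mathbf 0)}\abs{\bz}^{-n-s}w_\d(\bz)\,d\bz<\infty$. For $u\in L^2(\O_\d)$, I would write the integral formula \eqref{eqt: Dsd} as
\[
u(\bx)\int_{B_\d(\mathbf 0)}\frac{\bz}{\abs{\bz}}\frac{w_\d(\bz)}{\abs{\bz}^{n+s}}\,d\bz-\int \tilde u(\by)\balpha^s_\d(\bx,\by)\,d\by,
\]
where $\tilde u$ denotes $u$ restricted to $\O_\d$. The first integral vanishes by oddness of the integrand, and Young's inequality bounds the second term by $K\norm{u}_{L^2(\O_\d)}$ in $L^2(\O)$. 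One must also check that this pointwise formula agrees with the weak definition \eqref{def: weak Dsd}. For that, apply Fubini to $\int_{\O_\d}u\,\diver^s_\d(\bE_0\bvphi)$ and use the antisymmetry of $\balpha^s_\d$; the absolute integrability of the kernel justifies Fubini, and no cancellation near the diagonal is needed. Consequently $H^{s,\d}(\O)=L^2(\O_\d)$ with
\[
\norm{u}_{L^2(\O_\d)}\le\norm{u}_{H^{s,\d}(\O)}\le\sqrt{1+K^2}\,\norm{u}_{L^2(\O_\d)}.
\]

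\textbf{Step 2 (the two inclusions).} $\mathring H^{s,\d}(\O)\subseteq Z^{s,\d}(\O)$ holds because $Z^{s,\d}(\O)$ is closed: the restriction $u\mapsto u|_{\Gamma_\d}$ is continuous from $H^{s,\d}(\O)$ into $L^2(\Gamma_\d)$, and every test function supported in $\O$ lies in its kernel. For the reverse inclusion, take $u\in Z^{s,\d}(\O)$. Then $u|_\O\in L^2(\O)$, and since $C^\infty_c(\O)$ is dense in $L^2(\O)$ for an arbitrary open $\O$, we can choose $\vphi_k\in C^\infty_c(\O)$ with $\vphi_k\to u$ in $L^2(\O)$. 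Because $u=0$ on $\Gamma_\d$, this means $E_0\vphi_k\to u$ in $L^2(\O_\d)$. Note that $\O_\d=\O\cup\Gamma_\d$ up to a null set, as $\partial\O\subseteq\Gamma_\d$. Step 1 then gives $E_0\vphi_k\to u$ in $H^{s,\d}(\O)$, so $u\in\mathring H^{s,\d}(\O)$.

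\textbf{Main obstacle.} The only delicate point is Step 1 in the generality stated, where $\O$ is merely open and possibly unbounded. I need the definition of $\grad^s_\d$ to coincide with the bounded integral operator on all of $L^2(\O_\d)$, rather than only on smooth functions. I would handle this directly through Fubini, as described. If the weak definition raises issues, a fallback is to use the density of $C^\infty_c(\R^n)|_{\O_\d}$ in $L^2(\O_\d)$ together with the boundedness of both sides.
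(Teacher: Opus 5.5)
Your plan matches the paper's proof: show that $\grad^s_\d$ is bounded from $L^2(\O_\d)$ to $L^2(\O;\R^n)$, deduce that the $H^{s,\d}(\O)$-norm is equivalent to the $L^2(\O_\d)$-norm, and then use density of $C^\infty_c(\O)$ in $L^2(\O)$. Your Step 2 is fine, and so is your treatment of $s<0$.

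The gap is in Step 1 at the endpoint $s=0$, which the statement includes. The claim that $-n-s\ge -n$ makes $\abs{\bz}^{-n-s}$ integrable near the origin fails at equality. Since $w_\d\equiv a_0$ on $B_{b_0\d}(\mathbf 0)$,
\[
K=\s_{n-1}\int_0^\d \bar w_\d(r)\,r^{-1}\,dr=\infty \qquad\text{when } s=0.
\]
So for $s=0$, neither integral in your splitting converges absolutely for a general $u\in L^2(\O_\d)$; they exist at best as principal values. Young's inequality then gives no bound, and the Fubini argument identifying the pointwise formula with \eqref{def: weak Dsd} is unjustified. In fact $\grad^0_\d$ is a truncated vector Riesz transform. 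It is a Calder\'on--Zygmund operator whose $L^2$-boundedness comes \emph{only} from the cancellation of its odd kernel near the diagonal, which is exactly what you said was not needed.

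What is missing is an $L^2$ bound that uses this cancellation; the paper simply cites \cite{cueto_variational_2023} for it. Here is one self-contained route. For $u\in C^\infty_c(\R^n)$, $\grad^0_\d u$ is a Fourier multiplier with symbol
\[
m(\xi)=i\int_{\mathbb S^{n-1}}\theta\int_0^\d \sin(r\,\xi\cdot\theta)\,\frac{\bar w_\d(r)}{r}\,dr\,d\sigma(\theta).
\]
Because $\bar w_\d$ is nonnegative and nonincreasing, the second mean value theorem bounds the inner integral by $a_0\sup_{c>0}\abs{\int_0^c\frac{\sin\rho}{\rho}\,d\rho}$. Hence $m$ is bounded, and Plancherel gives $\norm{\grad^0_\d u}_{L^2(\O;\R^n)}\le\norm{m}_{L^\infty}\norm{u}_{L^2(\O_\d)}$ for $u\in C^\infty_c(\O_\d)$. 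With this bound in hand, your fallback works: both sides of \eqref{def: weak Dsd} are continuous in $u\in L^2(\O_\d)$ and agree on a dense subset. That identifies the weak gradient with the bounded extension, and the rest of your proof goes through unchanged.
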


\begin{proof}
    As noted in \cite{cueto_variational_2023}, $\grad^s_\d: L^2(\O_\d)\to L^2(\O;\R^n)$ is a bounded linear operator when $s=0$. When $s<0$, then the kernel $\balpha^s_\d$ is integrable and so $\grad^s_\d: L^2(\O_\d)\to L^2(\O;\R^n)$ is bounded. Hence, whenever $s\le 0$, the norm given by \eqref{eqt: Hsdnorm} is equivalent to the $L^2(\O_\d)$ norm:
    \begin{align*}
        \forall u\in L^2(\O_\d), \ \norm{u}_{L^2(\O_\d)}&\le \left(\norm{u}_{L^2(\O_\d)}^2 + \norm{\grad^s_\d u}_{L^2(\O;\R^n)}^2\right)^{1/2}\\
        &=\norm{u}_{H^{s,\d}(\O)}\\
        &\le \sqrt{1+\norm{\grad^s_\d}^2}\norm{u}_{L^2(\O_\d)}.
    \end{align*}
    It follows that the closure of $\set{\phi\in C^\infty_c(\O_\d): \supp \phi\subseteq\O}$ in $H^{s,\d}(\O)$ is the same as the closure in $L^2(\O_\d)$. Therefore, when $s\le 0$,
    \begin{align*}
        \mathring H^{s,\d}(\O) = \overline{\set{\phi\in C^\infty_c(\O_\d): \supp \phi\subseteq\O}}^{L^2(\O_\d)}=\set{u\in L^2(\O_\d): u|_{\Gamma_\d}\equiv0}.
    \end{align*}
    This last set on the right-hand side is precisely $Z^{s,\d}(\O)$ when $s\le0$, so the proof is complete.
\end{proof}

\section{Conclusion}
We have further developed the theory of fractional Sobolev spaces on bounded domains, and we have highlighted the distinction between two possible choices of enforcing Dirichlet-type constraints for nonlocal problems. The space $Z^{s,\d}(\O)$ is the result of the most direct translation of zero-boundary conditions into the nonlocal setting. However, Corollary~\ref{coro: Zsd} shows that $Z^{s,\d}(\O)$ lacks many desirable properties. Comparing Theorems~\ref{thm: char H10} and \ref{thm: char Hsd0} shows that $\mathring H^{s,\d}(\O)$ is the closest analog to $H^1_0(\O)$ in terms of structural properties of the space. To establish these results, we have introduced the distributional nonlocal normal $\NN$. Using properties of the operator $\NN$, we have extended existing nonlocal integration-by-parts identities and identified when the extension-by-zero operator is bounded. 

\appendix

\section{\texorpdfstring{
A Proof of Proposition~\ref{prop: distIntegrable}
}{
Proof of the Distance-Integrability Proposition
}}
\label{app: distIntegrable}
For the sake of the reader, we include a proof of Proposition~\ref{prop: distIntegrable}.

\begin{proof}
    The proof uses the method of local coordinates, which is described in \cite{kufner_weighted_1985}. We follow their notation. We focus on the case where $\kappa\in(-1,0)$, since there is no singularity for $\kappa\ge0$.

    The assumptions on $\O$ ensure that there exist a finite number of coordinate systems
    \begin{align*}
        (\bx_i',x_{i,n}), \ \bx_i'=(x_{i,1}, x_{i,2}, \dots, x_{i,n-1}), \ \text{ for } i=1,\dots,m
    \end{align*}
    and Lipschitz continuous functions $a_i:\overline{\Delta_i}\to\R^n$, where
    \begin{align*}
        \Delta_i=\set{\bx_i': \abs{x_{i,j}}<\e_i, \ \text{ for } \ j=1,2,\dots,n-1}, \ \text{ for } i=1,\dots,m
    \end{align*}
    such that every point $\bx\in\partial\O$ has at least one $k\in\set{1,\dots,m}$ with $x_{k,n}=a_k(\bx_k')$. Additionally, there is a positive number $\beta<1$ such that the sets
    \begin{align*}
        B_i\coloneqq \set{(\bx_i',x_{i,n}): \bx_i'\in\Delta_i, a_i(\bx_i')-\beta<x_{i,n}<a_i(\bx_i')+\beta}
    \end{align*}
    satisfy
    \begin{align*}
        & U_i = B_i\cap \O = \set{(\bx_i',x_{i,n}): \bx_i'\in\Delta_i, a_i(\bx_i')-\beta<x_{i,n}<a_i(\bx_i')}\\ \text{ and }\quad&
        \Gamma_i= B_i\cap \partial\O = \set{(\bx_i',x_{i,n}): \bx_i'\in\Delta_i, x_{i,n}=a_i(\bx_i')},
    \end{align*}
    for each $i=1,\dots,m$. Then there is an open set $U_0\Subset \O$ such that the collection $\set{U_0,\dots,U_m}$ forms an open cover of $\O$.

    This provides a decomposition of the integral to be bounded:
    \begin{align*}
        \int_\O d_\O(\bx)^{\kappa}\,d\bx &\le \sum_{i=0}^m \int_{U_i} d_\O(\bx)^{\kappa}\,d\bx.
    \end{align*}
    Since $U_0$ is a positive distance away from $\partial \O$, $d_\O$ is uniformly bounded  on $U_0$. As $\O$ has finite measure, we can conclude that $\int_{U_0} d_\O^{\kappa}\,d\bx<\infty$. Hence, we only need to show that the integral over each $U_i$ is finite for $i=1,\dots,m$.

    Fix $i\in\set{1,\dots,m}$. Then Corollary 4.8 of \cite{kufner_weighted_1985} states that the following estimate holds for every $\bx=(\bx_i',x_{i,n})$ in $U_i$
    \begin{align*}
        \frac{a_i(\bx_i')-x_{i,n}}{1+L_i}\le d_\O(\bx), \ \ \ \text{ and so } \ \ \ d_\O(\bx)^\kappa\le \left(\frac{a_i(\bx_i')-x_{i,n}}{a+L_i}\right)^\kappa
    \end{align*}
    where $L_i$ the Lipschitz constant of the function $a_i:\Delta_i\to\R^n$ and $\kappa\in(-1,0)$. We may therefore use the Fubini-Tonelli theorem to write
    \begin{align*}
        \int_{U_i} d_\O^{\kappa}(\bx)\,d\bx 
        &\le \left(\frac1{1+L_i}\right)^\kappa\int_{U_i}\left( a_i(\bx_i') - x_{i,n}\right)^{\kappa}\,d\bx\\
        &=\left(\frac1{1+L_i}\right)^\kappa\int_{\Delta_i}\int_{a_i(\bx_i')-\beta}^{a_i(\bx_i')}\left(a_i(\bx_i') - x_{i,n}\right)^{\kappa}\,dx_{i,n}\,d\bx_i'\\
        &\le \left(\frac1{1+L_i}\right)^\kappa\int_{\Delta_i}\int_0^\beta r^{\kappa}\,dr\,d\bx_i',\\
        &=\frac{\beta^{\kappa+1}\abs{\Delta_i}}{(\kappa+1)(1+L_i)^\kappa}.
    \end{align*}
    where we performed the change of variables $r = a_i(\bx_i') - x_{i,n}$ inside the inner integral and used the fact that $\kappa>-1$.
\end{proof}

\section{Additional Lemmas}
This approximation lemma is used in the proof of Proposition~\ref{prop: characterize N}.
\begin{lemma}\label{lem: uniform Lq' bd} 
    Assume \ref{H1} and that $q\in\left(\frac{2}{1+s},2\right]$. Let $\bbf\in C^\infty_c(\mathop{\mathrm{Int}}\Gamma_\d;\R^n)$ be given. Then there is a sequence $\set{\bg_k}_{k=1}^\infty$ satisfying the following properties: for each $k\in\mathbb{N}$,
    \begin{enumerate}
        \item $\bg_k\in C^\infty_c(\R^n;\R^n)$.
        \item $\norm{\bell^s_\d\bbf - \bg_k}_{H^{s,\d}(\O;\R^n)} < 1/k$.
        \item There exists $C>0$ depending on $\O, q$, and $\d$ but independent of $\bbf$ and $k$ such that $\norm{\bg_k}_{L^{q'}(\Gamma_\d;\R^n)}\le C\norm{\bbf}_{L^{q'}(\Gamma_\d;\R^n)}$.
    \end{enumerate}
\end{lemma}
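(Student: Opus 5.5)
The plan is to mollify the lift. Fix $\bbf\in C^\infty_c(\mathop{\mathrm{Int}}\Gamma_\d;\R^n)$ and set $\bh\coloneqq\bell^s_\d\bbf\in N^{s,2,\d}(\O;\R^n)$; by Remark~\ref{rmk: lift}, $\bh=\bbf$ on $\Gamma_\d$ and $\grad^s_\d\bh=\mathbf 0$ in $\O$. The sequence $\bg_k$ will be obtained from $\bh$ by two modifications. First, truncate and extend: because $\bbf$ is supported compactly inside $\mathop{\mathrm{Int}}\Gamma_\d$, $\bh$ vanishes near $\partial\O_\d$ and in a thin strip $\Gamma_{\e_1}$ next to $\partial\O$. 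Hence $E_0\bh\in L^2(\R^n;\R^n)$, and $\grad^s_\d$ at points of $\O$ only involves values of $\bh$ in $\O_\d$. Second, mollify: put $\bg_k\coloneqq\eta_{\rho_k}\ast E_0\bh$ with a standard mollifier $\eta_\rho$ and $\rho_k\downarrow0$. Each $\bg_k$ is smooth with compact support, which gives property (1).

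For property (2), note that $\grad^s_\d$ is a convolution-type operator with a translation-invariant kernel, so it commutes with mollification on $\R^n$. I would prove $E_0\bh\in H^{s,\d}(\R^n;\R^n)$. Inside $\O$ the gradient is $0$. On $\R^n\setminus\O$ it equals $\grad^s_\d$ of a function in $L^2$, and the only possible singular behaviour there comes from the kernel near points where $\bh$ is rough. I would handle this through the density result of Remark~\ref{rmk: density in Hsd}, i.e.\ \cite[Theorem 1]{cueto_variational_2023}, which directly provides $C^\infty_c(\R^n)$ approximants $\bg_k$ with $\norm{\bh-\bg_k}_{H^{s,\d}(\O)}<1/k$. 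Standard mollification-based proofs of that theorem (localization plus translation inward/outward, then mollification) produce $\bg_k$ that are locally averages of translates of $\bh$. This gives (2).

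Property (3) is the main obstacle: we need $\norm{\bg_k}_{L^{q'}(\Gamma_\d)}\le C\norm{\bbf}_{L^{q'}(\Gamma_\d)}$ even though $\bg_k$ near $\Gamma_\d$ averages values of $\bh$ from inside $\O$, where only $L^2$ control (in general) is available. Since $q'\ge 2$ and $q'<\frac{2}{1-s}$ (equivalent to $q>\frac{2}{1+s}$), Theorem~\ref{thm: psi} applies with $p=q'$. Thus $\ell^s_\d:L^{q'}(\Gamma_\d)\to N^{s,q',\d}(\O)$ is bounded in $L^{q'}(\O_\d)$, with the same function as the $L^2$ lift by uniqueness of $\Psi^s_\d$. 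Consequently $\norm{\bh}_{L^{q'}(\O_\d)}\le C\norm{\bbf}_{L^{q'}(\Gamma_\d)}$. Since mollification and translation are contractions in $L^{q'}$ and partitions of unity are bounded multipliers, $\norm{\bg_k}_{L^{q'}(\Gamma_\d)}\le C'\norm{\bh}_{L^{q'}(\O_\d)}\le C''\norm{\bbf}_{L^{q'}(\Gamma_\d)}$ with $C''$ depending only on $\O,q,\d$ (and the fixed partition of unity). The care needed is to verify that the approximation scheme from \cite{cueto_variational_2023} is of this form, so that the $L^{q'}$ bound passes through with a constant independent of $k$. If that is not clear, I would construct the $\bg_k$ directly by mollifying $E_0\bh$ with shrinking radius and check the $H^{s,\d}(\O)$ convergence using the translation-invariance of $\grad^s_\d$ together with $\bh$ vanishing near $\partial\O_\d$.
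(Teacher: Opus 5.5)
Your main line is the paper's proof. You take the approximants from the density theorem of \cite{cueto_variational_2023}, which are built from a fixed partition of unity, translations $\tau_{\l_k\bzeta_i}$ and mollification. Since translations and mollifiers are $L^{q'}$-contractions and the $\rho_i$ are bounded multipliers, $\norm{\bg_k}_{L^{q'}(\Gamma_\d;\R^n)}$ is bounded by a $k$-independent multiple of $\norm{\bell^s_\d\bbf}_{L^{q'}(\O_\d;\R^n)}$, and Theorem~\ref{thm: psi} at $p=q'\in[2,\frac{2}{1-s})$ closes the estimate. Your remark that the $L^{q'}$-lift and the $L^2$-lift coincide, by injectivity of $\Psi^s_\d$ on $N^{s,\d}(\O)\supseteq N^{s,q',\d}(\O)$, makes explicit a point the paper uses tacitly.

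You should discard the construction announced in your first paragraph and repeated as the fallback, namely $\bg_k=\eta_{\rho_k}\ast E_0\bh$ justified through $E_0\bh\in H^{s,\d}(\R^n;\R^n)$. Neither works in general.

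Since $\bbf$ vanishes near $\partial\O$, the field $\bh\chi_\O$ lies in $Z^{s,\d}(\O;\R^n)$, and $E_0\bh-E_0(\bh\chi_\O)=E_0\bbf\in C^\infty_c(\R^n;\R^n)$. So by (TF2)--(TF3), the claim $E_0\bh\in H^{s,\d}(\R^n;\R^n)$ is equivalent to the Hardy condition $\bh/d_\O^s\in L^2(\O)$. Nothing gives you this for an element of $N^{s,\d}(\O)$, and it fails in general: such functions typically blow up at $\partial\O$ (compare $u_*$ in Corollary~\ref{coro: Zsd} and Example~\ref{ex: ustar}). This is the same phenomenon as in Corollary~\ref{coro: Zsd}(2).

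In that case $\grad^s_\d(E_0\bh)$ is not square integrable near $\partial\O$; just outside $\partial\O$ it equals $-\vcN\bh$ up to a bounded term. A mollifier of radius $\rho$ carries these values into the inner strip $\Gamma_{-\rho}$, so the $H^{s,\d}(\O)$-error does not tend to zero in general.

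The outward translations in the construction of \cite{cueto_variational_2023} are what fix this. They give each boundary piece of $\bh$ a square-integrable nonlocal gradient on a neighbourhood $\O_{\d_k}\Supset\O$ \emph{before} mollifying, so they are essential rather than a technicality. That is why the paper recalls that construction instead of mollifying directly.
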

\begin{proof}
    Since $q\in \left(\frac{2}{1+s},2\right]$, its conjugate $q'$ satisfies $2\le q'<\frac{2}{1-s}$. Recalling Remark~\ref{rmk: lift}, for $p\le q'$, the following holds:
\begin{align}\label{eqt: bd on lift}
    \norm{\bell^s_\d\bbf}_{H^{s,p,\d}(\O;\R^n)}
    \le C\norm{\bbf}_{L^p(\Gamma_\d;\R^n)}<\infty.
\end{align}
As noted in Remark~\ref{rmk: density in Hsd}, there is therefore a sequence $\set{\bg_k}_{k=1}^\infty \subseteq C^\infty_c(\R^n;\R^n)$ such that, the restrictions $\bg_k|_{\O_\d}$ converge to $\bell^s_\d\bbf$ in $H^{s,\d}(\O)$. A construction of such an approximating sequence is produced in the Appendix of \cite{cueto_variational_2023}; we recall some of the key steps to obtain a uniform bound on $\norm{\bg_k}_{L^{q'}(\Gamma_\d;\R^n)}$.

Using the fact that $\O$ has $C^{1,1}$ boundary, we can select a partition of unity $\rho_0,\dots,\rho_{N+1}\in C^\infty_c(\R^n)$ and translation vectors $\bzeta_1,\dots,\bzeta_N\in\R^n$ such that
\begin{align*}
    \sum_{i=0}^{N+1}\rho_i = 1 \text{ in }\O_\d,
    \qquad\supp(\rho_0)\Subset\O,
    \qquad\supp(\rho_{N+1})\Subset\O^c,
\end{align*}
and, for all $\l>0$ small enough,
\begin{align*}
    \left(\supp(\rho_i)\cap\O^c\right)+\l\bzeta_i\Subset\O^c,\quad
    \text{ for each }i=1,\dots,N.
\end{align*}
Note that the choice of the functions $\rho_i$ and vectors $\bzeta_i$ are independent of $k$. Given a vector $\bz\in\R^n$, let $\tau_\bz$ denote the translation operator $\tau_\bz\bv(\bx)\coloneqq \bv(\bx-\bz)$. In \cite{cueto_variational_2023}, it is shown that for each $k\in\mathbb{N}$, there is some $\l_k>0$, such that, in addition to the property above, the function
\begin{align*}
    \bh_k\coloneqq \rho_0\bell^s_\d\bbf + \rho_{N+1}\bell^s_\d\bbf+\sum_{i=1}^N \tau_{\l_k\bzeta_i}(\rho_i\bell^s_\d\bbf)
\end{align*}
belongs to $H^{s,\d}(\O_{\d_k})$ for some $\d_k>0$ (note: $\O_{\d_k}\Supset\O$). Here, we identify $\bell^s_\d\bf$ and $\bh_k$ with there extensions by zero. Moreover, the sequence $\set{\lambda_k}_{k=1}^\infty$ can be selected so that $\set{\bh_k}_{k=1}^\infty$ satisfies
\begin{align*}
    \norm{\bell^s_\d\bbf - \bh_k}_{H^{s,\d}(\O;\R^n)} < \frac1{2k},
    \quad\text{ for all }k\in\mathbb{N}.
\end{align*}
Put $\bg_k\coloneqq \eta_k\ast \bh_k$, where each $\eta_k$ is a rescaled standard mollifier with mollifying radius smaller than $\dist(\partial\O,\partial\O_{\d_k})$ and $\norm{\bh_k - \bg_k}_{H^{s,\d}(\O;\R^n)}<\frac1{2k}$. We find $\bg_k\in C_c^\infty(\R^n;\R^n)$ and $\norm{\bell^s_\d\bbf-\bg_k}_{H^{s,\d}(\O;\R^n)} < 1/k$.

Now we estimate the $L^{q'}$ norm of $\bg_k|_{\Gamma_\d}$. Young's inequality for convolutions yields
\[
    \norm{\bg_k}_{L^{q'}(\Gamma_\d;\R^n)}
    \le \norm{\eta_k}_{L^1(\R^n)}\norm{\bh_k}_{L^{q'}(\R^n;\R^n)}
    =\norm{\bh_k}_{L^{q'}(\R^n;\R^n)}.
\]
Meanwhile, the triangle inequality and H\"older's inequality provides
\begin{multline*}
    \norm{\bh_k}_{L^{q'}(\R^n;\R^n)}
    \le\norm{\rho_0}_{L^q(\R^n)}\norm{\bell^s_\d\bbf}_{L^{q'}(\O_\d;\R^n)}
    +\norm{\rho_{N+1}}_{L^q(\R^n)}
        \norm{\bell^s_\d\bbf}_{L^{q'}(\O_\d;\R^n)}\\
    +\sum_{i=1}^N\norm{\tau_{\l_k\bzeta_i}}
        \norm{\rho_i}_{L^q(\R^n)}\norm{\bell^s_\d\bbf}_{L^{q'}(\O_\d;\R^n)}.
\end{multline*} The operator norm of the translation $\tau_{\l_k\bzeta_i}$ is bounded independently of $k$ and $i$, and the $L^q$-norms of $\rho_i$ are independent $k$. Hence, there exists $C>0$ such that $\norm{\bg_k}_{L^{q'}(\Gamma_\d;\R^n)}\le C\norm{\bell^s_\d\bbf}_{L^{q'}(\Gamma_\d;\R^n)}$, for all $k\in\mathbb{N}$. The desired result now follows from \eqref{eqt: bd on lift} with $p=q'$.

\end{proof}

The next approximation tool, Lemma~\ref{lem: approx w/cpt supp}, is used in the proof of Proposition~\ref{prop: hardy implies greens}. The idea behind the lemma, as well as the strategy of proof, are directly inspired by \cite{dyda_density_2022}.

\begin{lemma}\label{lem: approx w/cpt supp}
    Suppose \ref{H1} holds. If $u\in Z^{s,\d}(\O)$, $p\in(1,2)$, and $u/d_\O^s\in L^2(\O_\d)$, then there is a sequence $\set{u_k}_{k=1}^\infty \subseteq Z^{s,\d}(\O)$ satisfying the following three properties.
    \begin{enumerate}
        \item For each $k$, $\supp(u_k)\subseteq \O_{-1/k}$.
        \item $k^s\norm{u-u_k}_{L^2(\O)}\to0$, as $k\to\infty$. \label{property: ks}
        \item $\norm{\grad^s_\d(u-u_k)}_{L^p(\O;\R^n)}\to0$, as $k\to\infty$. \label{property: Dto0}
    \end{enumerate}
\end{lemma}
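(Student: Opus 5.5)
We will construct $u_k$ by a cutoff near the boundary, following the Hardy-type truncation strategy of \cite{dyda_density_2022}. Fix a smooth nonincreasing $\eta:[0,\infty)\to[0,1]$ with $\eta=0$ on $[0,1]$, $\eta=1$ on $[2,\infty)$. Set $\psi_k(\bx)\coloneqq\eta(k\, d_\O(\bx))$ for $\bx\in\O$ and $\psi_k\coloneqq0$ on $\Gamma_\d$, and take $u_k\coloneqq \psi_k u$ (replacing $k$ by $2k$ as needed to get $\supp u_k\subseteq\O_{-1/k}$). Since $\O$ is $C^{1,1}$, $d_\O$ is Lipschitz with constant $1$, so $\psi_k$ is Lipschitz with constant $\lesssim k$, and $1-\psi_k$ is supported in $\Gamma_{-2/k}$. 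Clearly $u_k=0$ on $\Gamma_\d$, and membership $u_k\in H^{s,\d}(\O)$ will follow from the estimates below.

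Property \ref{property: ks} is immediate from the Hardy hypothesis. On $\supp(1-\psi_k)\subseteq\Gamma_{-2/k}$ we have $k^s\le 2^s d_\O^{-s}$, so
\[
k^{2s}\norm{u-u_k}_{L^2(\O)}^2\le 4^s\int_{\Gamma_{-2/k}}\frac{\abs{u}^2}{d_\O^{2s}}\,d\bx\to0
\]
by absolute continuity of the integral, since $u/d_\O^s\in L^2(\O_\d)$.

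For property \ref{property: Dto0}, write $v_k\coloneqq 1-\psi_k$ (extended by $1$ on $\Gamma_\d$, which is harmless since $u=0$ there), so that $u-u_k=v_ku$. I would use the Leibniz-type formula
\[
\grad^s_\d(v_ku)(\bx)=v_k(\bx)\grad^s_\d u(\bx)+\int_{B_\d(\bx)}(v_k(\bx)-v_k(\by))u(\by)\balpha^s_\d(\bx,\by)\,d\by,
\]
which I would justify for smooth approximants (Remark~\ref{rmk: density in Hsd}) and then pass to the limit weakly via \eqref{def: weak Dsd}. The first term tends to $0$ in $L^2(\O)$, hence in $L^p(\O)$, by dominated convergence, because $v_k\to0$ a.e. in $\O$ and $\abs{v_k}\le1$. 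The commutator term $R_k$ is the main obstacle. I would bound it as follows: $\abs{v_k(\bx)-v_k(\by)}\le \min\{1,Ck\abs{\bx-\by}\}$, and it vanishes unless $\bx$ or $\by$ lies in $\Gamma_{-2/k}\cup\Gamma_\d$. Split the $\by$-integral at $\abs{\bx-\by}=1/k$. On the near part the kernel gives $k\int_{\abs{\bh}<1/k}\abs{\bh}^{1-n-s}\abs{u(\bx-\bh)}\,d\bh$. On the far part it gives $\int_{\abs{\bh}>1/k}\abs{\bh}^{-n-s}\abs{u(\bx-\bh)}\chi(\cdots)\,d\bh$.

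The key point is that when $\by$ is in the relevant region, $d_\O(\by)\lesssim 1/k+\abs{\bx-\by}$. This lets me trade the factors $k^{s}$ or $\abs{\bh}^{-s}$ for $d_\O(\by)^{-s}$, up to an additional small power $\abs{\bh}^{\e-n}$ that is integrable. The mechanism is the same as in the proof of Proposition~\ref{prop: bound normal in terms of hardy}: apply Minkowski's integral inequality in $\bh$. This yields
\[
\norm{R_k}_{L^p(\O)}\lesssim \norm{u/d_\O^{s+\e}}_{L^p(\Gamma_{-c/k}\cup\,\text{near region})}+\text{(terms localized to }\Gamma_{-c/k}).
\]
Then I would apply the same Hölder step as there, with $p<2$ and $d_\O^{-1/2}\in L^1$ by Proposition~\ref{prop: distIntegrable}, to bound this by $\norm{u/d_\O^s}_{L^2(\Gamma_{-c/k})}$ times a constant. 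That quantity tends to $0$.

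The delicate part is making the localization rigorous: showing that every contribution to $R_k$ is controlled by $u$ restricted to a shrinking collar $\Gamma_{-c/k}$, for pairs where $\bx$ is far from the boundary but $\by$ is near it. This is exactly why only an $L^p$, $p<2$, conclusion is claimed. If the near and far split does not close cleanly, my fallback is to bound $R_k$ pointwise by a maximal-type operator applied to $u\chi_{\Gamma_{-c/k}}/d_\O^{s+\e}$.
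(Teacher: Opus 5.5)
Most of your plan follows the paper: the cutoff built from $d_\O$, the nonlocal product rule, Minkowski in $\bh$, the split at $\abs{\bh}=1/k$ with the Lipschitz bound on the near part, the trade of $\abs{\bh}^{-s}$ for $d_\O^{-(s+\e)}$, and the H\"older step using $d_\O^{-1/2}\in L^1(\O)$. Property (2) is handled exactly as in the paper. The gap is in the far part of the commutator, at the step you flag as delicate. Your claim that every contribution to $R_k$ is controlled by $u$ restricted to a shrinking collar $\Gamma_{-c/k}$ is false. So the bound $\norm{R_k}_{L^p}\lesssim\norm{u/d_\O^s}_{L^2(\Gamma_{-c/k})}$ cannot hold, and the maximal-function fallback with $u\chi_{\Gamma_{-c/k}}$ fails for the same reason.

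The troublesome pairs are not the ones you name ($\bx$ interior, $\by$ near $\partial\O$). They are the reverse: $\bx\in\Gamma_{-2/k}$, $\abs{\bx-\by}\ge 1/k$, and $\by$ deep in $\O$. There $v_k(\bx)-v_k(\by)=v_k(\bx)$, and $u(\by)$ is evaluated in the interior. For a concrete failure, take $u\in C^\infty_c(\O_{-\d/4})$. For large $k$ you have $v_ku\equiv0$, so $R_k=-v_k\grad^s_\d u$. On $\Gamma_{-1/k}$ this equals $\int u(\by)\balpha^s_\d(\bx,\by)\,d\by$, which is in general nonzero, while $\norm{u/d_\O^s}_{L^2(\Gamma_{-c/k})}=0$.

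In this term the smallness comes from the $\bx$-region, not the $\by$-region. After Minkowski and the substitution $\by=\bx-\bh$, the piece is bounded by
\[
\int_{1/k\le\abs{\bh}<\d}\abs{\bh}^{-n-s}\norm{u}_{L^p(E_k(\bh))}\,d\bh,\qquad E_k(\bh)\coloneqq\O\cap(\Gamma_{-2/k}-\bh).
\]
These sets are the paper's $A_k(-\bh)$. They satisfy $\abs{E_k(\bh)}\le\abs{\Gamma_{-2/k}}$, and on $E_k(\bh)$ we have $d_\O(\by)\le 2/k+\abs{\bh}\le 3\abs{\bh}$. Your trade then gives the bound $C\int\abs{\bh}^{\e-n}\norm{u/d_\O^{s+\e}}_{L^p(E_k(\bh))}\,d\bh$.

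What closes the argument is uniform absolute continuity. Since $\abs{u/d_\O^{s+\e}}^p\in L^1(\O)$ and $\sup_{\bh}\abs{E_k(\bh)}\to0$, you get $\sup_{\bh}\norm{u/d_\O^{s+\e}}_{L^p(E_k(\bh))}\to0$. Equivalently, in your H\"older step, $\int_E d_\O^{-1/2}\,d\by\to0$ uniformly as $\abs{E}\to0$. This is exactly how the paper handles $\mathsf{II}_k$; the translated sets are not contained in any shrinking collar.

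A smaller point: your $L^p$ estimates with $p<2$ do not show $u_k\in H^{s,\d}(\O)$. For that you need $R_k\in L^2(\O)$ for each fixed $k$. This follows from Young's inequality, because $\min\{1,Ck\abs{\bh}\}\abs{\bh}^{-n-s}w_\d(\bh)$ is integrable.
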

\begin{proof}
For each $k\in\N$, define $\psi_k\in C_c(\O)$ by
\[
    \psi_k(\bx) = \begin{cases}
        0, & d_\O(\bx)\le \frac1k\\
        kd_\O(\bx)-1, & \frac1k< d_\O(\bx)\le \frac2k\\
        1, & d_\O(\bx)>\frac2k.
    \end{cases}
\]
Where convenient, we identify $\psi_k$ with its extension by zero to $\R^n$. For each $k\in\N$, put $u_k:=u\psi_k$.
Since $u\in H^{s,\delta}(\O)$ and each $\psi_k$ is Lipschitz, Lemmas 3.2 and 3.3 of \cite{BeCuMC22b} ensure the nonlocal product rule holds. Thus, for each $k\in\N$, we find $u_k\in Z^{s,\d}(\O)$, with support in $\O_{-1/k}$. It remains to verify that (\ref{property: ks}) and (\ref{property: Dto0}) are valid for $\{u_k\}_{k=1}^\infty$.

For (\ref{property: ks}), since $u=u_k$ in $\O_{-2/k}$, we have
\[
    \norm{u-u_k}_{L^2(\O)}^2
    =\int_{\Gamma_{-2/k}}(1-\psi_k)^2\abs{u}^2d\bx
    \le\int_{\Gamma_{-2/k}}\abs{u}^2d\bx.
\]
Noting that $d_\O^{2s}(\bx)\le 2^{2s}k^{-2s}$ for all $\bx\in\Gamma_{-2/k}$, we may continue with
\[
    \norm{u-u_k}_{L^2(\O)}^2
    \le \int_{\Gamma_{-2/k}}\frac{\abs{u}^2}{d_\O^{2s}}d_\O^{2s}\,d\bx\le 2^{2s}k^{-2s}\norm{u/d_\O^s}_{L^2(\Gamma_{-2/k})}^2.
\]
Hence, $k^s\norm{u-u_k}_{L^2(\O)}\le 2^s\|u/d_\O^s\|_{L^2(\Gamma_{-2/k})}$.
Since $|\Gamma_{-2/k}|\to|\partial\O|=0$, as $k\to\infty$, property~\ref{property: ks}. follows from the absolute continuity of the integral.

Now, we turn to~\ref{property: Dto0}~.. From the nonlocal product rule given by Lemma 3.2 and 3.3 of \cite{BeCuMC22b} or Lemma 3.8 of \cite{bellido_nonlocal_2025}, we have
\begin{align*}
    \grad^s_\d \left(u-u_k\right) = (1-\psi_k)\grad^s_\d u - \underbrace{\int_{B_\d(\bx)}\frac{\psi_k(\bx)-\psi_k(\by)}{\abs{\bx-\by}^{n+s}}u(\by)\frac{\bx-\by}{\abs{\bx-\by}}w_\d(\bx-\by)\,d\by}_{\eqqcolon K_{\psi_k}(u)}.
\end{align*}
As in the proof of~\ref{property: ks}, since $0\le\xi_k=1-\psi_k\le1$ and $\supp\left(1-\psi_k\right)\subseteq\Gamma_{-2/k}$, we conclude that $\norm{(1-\psi_k)\grad^s_\d u}_{L^p(\O;\R^n)}\to0$. Thus, to establish property~\ref{property: Dto0}., it suffices to bound the integral term $K_{\psi_k}(u)$. 

To this end, we first perform the same change of variables and Minkowski inequality.
Then, decomposing the ball $B_\d(0)$ into $B_{1/k}(0)$ and $B_\d(0)\setminus B_{1/k}(0)$ yields
\begin{align*}
&\quad
    \norm{K_{\psi_k}(u)}_{L^p(\O;\R^n)} 
    \\&\le
    \int_{B_{1/k}(0)}\left(\int_\O \abs{\frac{\psi_k(\bx)-\psi_k(\bx-\bh)}{\abs{\bh}^{n+s}}u(\bx-\bh)w_\d(\bh)}^p\,d\bx\right)^{1/p}d\bh 
    \\&\qquad
    +\int_{B_{\d}(0)\setminus B_{1/k}(0)}\left(\int_\O \abs{\frac{\psi_k(\bx)-\psi_k(\bx-\bh)}{\abs{\bh}^{n+s}}u(\bx-\bh)w_\d(\bh)}^p\,d\bx\right)^{1/p}d\bh
    \\&
    \eqqcolon \mathsf{I}_k + \mathsf{II}_k
\end{align*}
We shall separately bound $\mathsf{I}_k$ and $\mathsf{II}_k$.

For the moment, consider a fixed $\bh\in B_{1/k}(\mathbf0)$. If $\bx\in\O_{-3/k}$, then $\bx-\bh\in\O_{-2/k}$ and so $\psi_k(\bx)=1=\psi_k(\bx-\bh)$. Thus, the only $\bx\in\O$ where the integrand could potentially be nonzero lie in $\Gamma_{-3/k}$ and so $\bx-\bh\in\Gamma_{-4/k}$. Additionally, $\abs{\psi_k(\bx)-\psi_k(\bx-\bh)}\le k\abs{\bh}$. Together, these facts allow us to reduce the domain of integration and cancel one power of $\abs{\bh}$ as follows
\begin{align*}
    \mathsf{I}_k
    &\le\int_{B_{1/k}(\mathbf{0})}\left(\int_{\Gamma_{-4/k}}
        \abs{\frac{k\abs{\bh}}{\abs{\bh}^{n+s}}
        u(\bx-\bh)w_\d(\bh)}^p\,d\bx\right)^{1/p}d\bh\\
    &\le a_0 k\int_{B_{1/k}(\mathbf{0})}\abs{\bh}^{1-n-s}
        \left(\int_{\Gamma_{-5/k}}\abs{u(\by)}^pd\by\right)^{1/p}
        d\bh.
\end{align*}
Note that we also used that fact $w_\d \le a_0$.
Compute
\begin{align*}
    \int_{B_{1/k}(\mathbf{0})}\abs{\bh}^{1-n-s}\,d\bh = \sigma_{n-1}\int_0^{1/k} r^{1-n-s}r^{n-1}\,dr = \frac{\s_{n-1}k^{s-1}}{1-s},
\end{align*}
where $\s_{n-1}$ is the surface area of the unit sphere in $\R^n$.
Since $p<2$, we combine the above work with H\"older's inequality and find
\[
    \mathsf{I}_k 
    \le Ca_0  k^s\norm{u}_{L^p(\Gamma_{-5/k})}
    \le Ck^s\abs{\Gamma_{-5/k}}^{\frac{2-p}{2p}}\norm{u}_{L^2(\Gamma_{-5/k})},
\]
where, $C<\infty$
does not depend on $k$ and may change from line to line.. Multiplying and dividing by $d_\O^{2s}$ and using the bound $d_\O(\by)^{2s}\le (5/k)^{2s}$ for $\by\in \Gamma_{-5/k}$, we obtain
\[
    \mathsf{I}_k
    \le Ck^s\abs{\Gamma_{-5/k}}^\frac{2-p}{2p}\cdot  5^{s}k^{-s}
        \left(\int_{\Gamma_{-5/k}}
            \frac{\abs{u}^2}{d_\O^{2s}}\,d\by
        \right)^{1/2}
    = C\abs{\Gamma_{-5/k}}^\frac{2-p}{2p}
        \norm{u/d_\O^s}_{L^2(\Gamma_{-5/k})}.
\]
Invoking the assumption that $u/d_\O^s\in L^2(\O_\d)$, the upper bound above must vanish as $k\to\infty$.

Now, we turn to $\mathsf{II}_k$. For convenience, put $\xi_k:=1-\psi_k$. Observe that $0\le \xi_k\le 1$, $\{\bz\in\O:\xi_k(\bz)>0\}=\Gamma_{-2/k}$, and for each $\bx,\by\in\O$,
\[
    |\psi_k(\bx)-\psi_k(\by)|=|\xi_k(\bx)-\xi_k(\by)|
    \le|\xi_k(\bx)|+|\xi_k(\by|.
\]
Focusing on the inner integral in the definition of $\mathsf{II}_k$, we may write
\begin{align}
\nonumber
    & \int_\O \abs{\frac{\xi_k(\bx)-\xi_k(\bx-\bh)}{\abs{\bh}^{n+s}}
        u(\bx-\bh)w_\d(\bh)}^p\,d\bx\\
\nonumber
    &\qqquad
    \le 2^{p-1}\frac{w_\d(\bh)^p}{|\bh|^{(n+s)p}}\left(
        \int_\O \left(|\xi_k(\bx)|^p+|\xi_k(\bx-\bh)|^p\right)|u(\bx-\bh)|^p
        \,d\bx\right)\\
\nonumber
    &\qqquad
    \le C\frac{w_\d(\bh)^p}{|\bh|^{(n+s)p}}\left(
        \int_\O\xi_k(\bx)|u(\bx-\bh)|^p\,d\bx
        +\int_\O\xi_k(\bx-\bh)|u(\bx-\bh)|^p\,d\bx\right)\\
\label{eq: B.2-II_inner}
    &\qqquad
    = C\frac{w_\d(\bh)^p}{|\bh|^{(n+s)p}}\left(
        \int_{\Gamma_{-2/k}}|u(\bx-\bh)|^p\,d\bx
        +\int_{A_k(\bh)}|u(\bx-\bh)|^p\,d\bx\right),
\end{align}
where for each $\bh\in\R^n$,
\[
    A_k(\bh):=\O\cap(\Gamma_{-2/k}+\bh)
    =\O\cap\{\bz\in\R^n:\bz-\bh\in\Gamma_{-2/k}\}.
\]

Before continuing,
we verify that $u/d_\O^{s+\e}\in L^p(\O)$, with $\e=\frac{2-p}{4p}>0$. Indeed, by H\"older's inequality,
\begin{align*}
    \int_\O\left|\frac{u(\by)}{d_\O(\by)^{s+\e}}\right|^pd\by
    \le&
    \left(\int_\O\left|
        \frac{u(\by)}{d_\O(\by)^s}\right|^2\,d\by\right)^\frac{p}{2}
    \left(\int_\O\left(
        \frac{1}{d_\O(\by)^{\e p}}\right)^{\frac{2}{2-p}}\,d\by
    \right)^\frac{2-p}{2}\\
    =&
    \|u/d_\O^s\|_{L^2(\O)}^\frac p2
    \left(\int_\O\frac{1}{d_\O(\by)^\frac12}\,d\by\right)^\frac{2-p}{2}.
\end{align*}
Proposition~\ref{prop: distIntegrable} implies $1/d_\O^\frac{1}{2}\in L^1(\O)$, so the upper bound above is finite.

Now, we bound the two integrals in~\eqref{eq: B.2-II_inner}. If either $\bx\in\Gamma_{-2/k}$ or $\bx\in A_k(\bh)$, then
\[
    d_{\O}(\bx-\bh)\le\frac2k+|\bh|\le3|\bh|,
\]
whenever $|\bh|\ge1/k$. It follows that for $|\bh|\ge 1/k$,
\begin{align*}
    &\int_{\Gamma_{-2/k}}|u(\bx-\bh)|^p\,d\bx
        +\int_{A_k(\bh)}|u(\bx-\bh)|^p\,d\bx\\
    &\qquad
    =\int_{\Gamma_{-2/k}}d_\O(\bx-\bh)^{(s+\e)p}\left|
            \frac{u(\bx-\bh)}{d_\O(\bx-\bh)^{s+\e}}\right|^pd\bx\\
    &\qqqquad\qqquad
        +\int_{A_k(\bh)}d_\O(\bx-\bh)^{(s+\e)p}\left|
            \frac{u(\bx-\bh)}{d_\O(\bx-\bh)^{s+\e}}\right|^pd\bx\\
    &\qquad
    \le(3|\bh|)^{(s+\e)p}\left(
        \int_{\Gamma_{-2/k}}\left|
            \frac{u(\bx-\bh)}{d_\O(\bx-\bh)^{s+\e}}\right|^pd\bx
        +\int_{A_k(\bh)}\left|
            \frac{u(\bx-\bh)}{d_\O(\bx-\bh)^{s+e}}\right|^pd\bx
    \right).
\end{align*}
Since $\supp u\subseteq\ov{\O}$ and $\bx-\bh\in A_k(\bh)\Rightarrow\bx\in\Gamma_{-2/k}$, by making the change of variables $\by\leftrightarrow\bx-\bh$, we obtain
\begin{multline}\label{eq: B.2-inner bound}
    \int_{\Gamma_{-2/k}}|u(\bx-\bh)|^p\,d\bx
        +\int_{A_k(\bh)}|u(\bx-\bh)|^p\,d\bx\\
    \le(3|\bh|)^{(s+\e)p}\left(
        \int_{A_k(-\bh)}\left|
            \frac{u(\by)}{d_\O(\by)^{s+\e}}\right|^p\,d\by
        +\int_{\Gamma_{-2/k}}\left|
            \frac{u(\by)}{d_\O(\by)^{s+\e}}\right|^p\,d\bx
    \right).
\end{multline}

Returning to~\eqref{eq: B.2-inner bound}, we will argue that the two integrals vanish as $k\to\infty$. Let $\eta>0$ be given. Recall that $A_k(\bh)$ is contained in a translation of $\Gamma_{-2/k}$. Thus, for any $\bh\in\R^n$, we have $\sup_{\bh\in\R^n}|A_k(-\bh)|\le |\Gamma_{-2/k}|$ and, moreover, that $|\Gamma_{-2/k}|\to 0$, as $k\to\infty$. Thus, absolute continuity of the integral implies there is a $k_\eta\in\mathbb{N}$ such that for all $k\ge k_\eta$,
\[
    \sup_{\bh\in B_\d(0)\setminus B_{1/k}(0)}\int_{A_k(-\bh)}\left|
        \frac{u(\by)}{d_\O(\by)^{s+\e}}\right|^p\,d\bx
    +\int_{\Gamma_{-2/k}}\left|
        \frac{u(\by)}{d_\O(\by)^{s+\e}}\right|^p\,d\bx
    <\eta.
\]
Collecting this with the bounds in~\eqref{eq: B.2-inner bound} and~\eqref{eq: B.2-II_inner}, for all $\bh\in B_\d(\mathbf{0})\setminus B_{1/k}(\mathbf{0})$ and $k\ge k_\eta$, we have
\begin{align*}
    \int_\O \abs{\frac{\xi_k(\bx)-\xi_k(\bx-\bh)}{\abs{\bh}^{n+s}}
        u(\bx-\bh)w_\d(\bh)}^p\,d\bx
    \le&
    C\eta^p|\bh|^{-(n+s)p}|\bh|^{(s+\e)p}w_\d(\bh)^p\\
    \le&
    C\eta^p|\bh|^{-(n-\e)p},
\end{align*}
where $C<\infty$ is a constant that is independent of $k$ and $|\bh|$.
It follows that
\[
    \mathsf{II}_k
    \le C\eta\int_{B_\d(\mathbf{0})\setminus B_{1/k}(\mathbf{0})}\frac{1}{|\bh|^{n-\e}}\,d\bh
    \le C\frac{\eta\delta^\e}{\e}.
\]
As $\eta>0$ was arbitrary, we conclude that $\lim_{k\to\infty}\mathsf{II}_k=0$.
\end{proof}

\section{\texorpdfstring{
Comments on Example~\ref{ex: ustar}
}{
Comments on the Numerical Example
}}
\label{app: numerics}
To find a function $u_*$ described in Corollary~\ref{coro: Zsd}, it suffices to find a nonzero function in $Z^{s,\d}(\O)$ with zero nonlocal gradient. As noted in \cite{kreisbeck_non-constant_2024}, one can instead find a function $u_*$ such that $Q^s_\d\ast u_*$ is constant in $\O$, where $Q^s_\d$ is the nonlocal-to-local translation operator defined in \cite{bellido_non-local_2023}. Thus, once we have a discrete, matrix-representation of the convolution operator $u\mapsto Q^s_\d\ast u$, the problem is reduced to solving a linear system.

We discretized in terms of the Chebyshev points on the interval $[-1,1]$, which have a number of desirable numerical properties \cite{trefethen_approximation_2019}. However, some care is needed to construct a faithful depiction of $u_*$: the function $Q^s_\d$ is defined in terms of integrating a strongly singular function very close to its singularity, the resulting function $Q^s_\d$ is itself weakly singular, and the solution $u_*$ has singularities at the endpoints. At each stage of the computation, we approach the limits of what can be represented in floating point arithmetic.

We briefly describe the general approach to navigate these difficulties. First, we exploit the fact that $w_\d$ is constant on a neighborhood of the origin. This means that the behavior of $Q^s_\d$ near zero can be worked out analytically, on paper. This removes the most dangerous points: $Q^s_\d(z)$ is hardest to evaluate numerically when $z\sim0$, since the inputs approach the strong singularity in the integral defining $Q^s_\d$. For points outside the neighborhood where $Q^s_\d$ is known analytically, we compute the defining integral using an adaptive double-exponential quadrature scheme. This is a good general purpose numerical integrator which excels at handling endpoint singularities. The adaptive quadrature terminates when two consecutive estimates $E_k, E_{k+1}$ satisfy
\begin{align*}
    \abs{E_{k+1}-E_k} < \abs{E_{k+1}}\cdot\mathsf{relTol} + \mathsf{absTol}, \ \text{ where } \mathsf{relTol} = 1\times10^{-14} = \mathsf{absTol}.
\end{align*}
Together, the analytic formula for $Q^s_\d$ near the origin and the numerical integration scheme provide a high-precision method for evaluating $Q^s_\d$ at arbitrary points in its domain. Although each point evaluation of $Q^s_\d$ requires an additional integration, the process can be easily accelerated via vectorization.

To compute the convolution $Q^s_\d\ast u$, we further exploit the fact that $Q^s_\d$ behaves exactly like a known power function at the origin. This means that for each $x\in\O$, $(Q^s_\d\ast u)(x)$ can be written as a sum of integrals, each with a fixed power-function singularity at one endpoint. As a result, we can apply Gauss-Jacobi quadrature, using Chebyshev interpolation to approximate $u$ with a polynomial. Since the number of Chebyshev points, $N$, is fixed, there is an upper bound on the degree of the Chebyshev series representing $u$. Hence, we can choose a quadrature scheme of high enough order to integrate any such polynomial \textit{exactly}, with no quadrature error; of course, there is still the discretization and interpolation error in representing $u$ on a finite grid, as well as the rounding error inherent to floating point arithmetic.

This provides a method of evaluating $Q^s_\d\ast u$, which is enough to construct a matrix representation of that convolution operator. We solved the resulting linear system to find $u_*^N$, an approximation to $u_*$ using $N$ Chebyshev points. Using the Holland Computing Center's resources, we computed $u_*^N$ for a sequence of values of $N$, including $N\in\set{257,513,1025,2049,4097,4500,6145}$, and observed convergence to some fixed function $u_*$. After fitting a power function to the endpoint behavior of each $u_*^N$, we also saw convergence of the exponents and coefficients, as well as $R^2$ values approaching a perfect fit of $R^2=1$. We interpret this as compelling evidence that the method is genuinely approximating a real function in $Z^{s,\d}(\O)$ with the desired properties.

As additional verification, we computed $\grad^s_\d u_*^N$ , without using the matrix construction described above. If our method was working correctly, then the truncated-fractional gradient should be zero on $\O$. Note, however, that even just having $\grad^s_\d u_*^N\in L^2(\O_\d)$ would be sufficient for demonstrating that $Z^{s,\d}(\O)\not= \mathring H^{s,\d}(\O)$. This is because $u_*^N$ seemingly violates the fractional Hardy inequality and therefore cannot be in $\mathring H^{s,\d}(\O)$.

To compute $\grad^s_\d u_*^N$, we used MATLAB's builtin \texttt{integral} command to compute the convolution on an evenly spaced grid of 100 points. Since the function is not defined at $x=\pm1$, we used the interval $[-0.999999,0.999999]$. The MATLAB command uses a change-of-variables to mitigate the endpoint singularities, followed by globally adaptive Gauss-Kronrod quadrature. The result was that $\abs{\grad^s_\d u_*^N(x)}<1.4090\times10^{-4}$ for all 100 points tested, with a relative tolerance set to $10^{-8}$. The highest values were at the endpoints. Excluding the points $x=\pm0.999999$, we obtained $\abs{\grad^s_\d u_*^N(x)}<2.0130\times10^{-6}$. Some possible explanations for this behavior include that the function is not fully resolved, that the power-function fit passed to \texttt{integral} was chosen incorrectly, or that the singularities present at each stage of the computation lead to unavoidable quadrature error. Indeed, when MATLAB returned its answer, it also printed the warning ``There may be a singularity, or the tolerances may be too tight for this problem." In any case, the two independent methods both suggest that $\abs{\grad^s_\d u_*^N(x)}$ remains close to zero even near the endpoints. This provides additional evidence that $u_*^N$ is genuinely approximating a function in $Z^{s,\d}(\O)$ which is not in $\mathring H^{s,\d}(\O)$.

\section*{Acknowledgments}
\noindent
This work was completed utilizing the Holland Computing Center of the University of Nebraska, which receives support from the UNL Office of Research and Innovation, and the Nebraska Research Initiative.

\section*{Declarations}

\subsection*{Data Availability Statement}
The datasets and code generated and/or analyzed during the current study are available from the authors upon reasonable request.

\subsection*{Conflict of Interest Statement}
On behalf of authors M.~F., A.~L., and M.~P., the corresponding author states that there is no conflict of interest. 

\subsection*{Author Contribution Statement}
Authors M.~F., A.~L., and M.~P. contributed equally to the paper.

\subsection*{Funding Declaration}
 \noindent
Author M.~F. was partially supported by NSF grant DMS-2109149.
Authors A.~L. and M.~P. were partially supported by NSF grants DMS-2510494 and CMMI-1953346.
Author A.~L. was also partially supported by USGS Grant No. G23AC00156-01.

\end{document}